\documentclass[11pt, letterpaper, oneside]{amsart}

\headheight=8pt   \topmargin=0pt \textheight=624pt
\textwidth=432pt \oddsidemargin=18pt \evensidemargin=18pt

\usepackage{latexsym, amsmath, amssymb, amsfonts, amscd,bm,stmaryrd}
\usepackage{amsthm}
\usepackage{t1enc}
\usepackage[mathscr]{eucal}
\usepackage{graphicx, pb-diagram}
\usepackage{fancyhdr}
\usepackage{fancybox}
\usepackage{enumerate}
\usepackage{color}
\usepackage[all]{xy}
\usepackage{hyperref}
\usepackage{url}
\usepackage{tikz}

\theoremstyle{plain}
\newtheorem{thm}{Theorem}[section]
\newtheorem*{thmnonumber}{Theorem}

\newtheorem{prop}[thm]{Proposition}
\newtheorem*{propnonumber}{Proposition}
\newtheorem{lemma}[thm]{Lemma}
\newtheorem{cor}[thm]{Corollary}

\theoremstyle{definition}
\newtheorem{defi}[thm]{Definition}

\newtheorem{ex}[thm]{Example}

\theoremstyle{remark} 
\newtheorem{rem}[thm]{Remark}

\newcommand{\ZZ}{\ensuremath{\mathbb Z}}

\newcommand{\RR}{\ensuremath{\mathbb R}}
\newcommand{\g}{\ensuremath{\mathfrak{g}}}

\newcommand{\h}{\ensuremath{\mathfrak{h}}}



\newcommand{\nrt}[1]{\textcolor{black}{#1}}
\newcommand{\nbt}[1]{\textcolor{black}{#1}}

\newcommand{\nft}[1]{\textcolor{black}{#1}}

 

\definecolor{forest}{rgb}{0,0.5,0}

\begin{document}

\title[Lie $2$-algebra moment maps]{Lie $2$-algebra moment maps\\ in multisymplectic geometry}

\author{Leyli Mammadova}
\email{leyli.mammadova@kuleuven.be}
  
 \author{Marco Zambon}
\email{marco.zambon@kuleuven.be}
\address{KU Leuven, Department of Mathematics, Celestijnenlaan 200B box 2400, BE-3001 Leuven, Belgium.}

 
\subjclass[2010]{Primary: 53D20; \nft{17B70}. 
\\
Keywords: multisymplectic geometry; 
moment map; Lie 2-algebra.
 }

\begin{abstract}
Consider a closed non-degenerate 3-form $\omega$ with an infinitesimal action of a Lie algebra $\g$. Motivated by the fact that the observables associated to $\omega$ form a Lie 2-algebra, we introduce homotopy moment maps defined on a Lie 2-algebra rather than just on the Lie algebra $\g$.

\nft{We formulate existence criteria and provide a construction for such homotopy moment maps, by characterizing them in terms of cohomology.} 
 \end{abstract}

\maketitle
\date{today}

\setcounter{tocdepth}{1} 
\tableofcontents

\section*{Introduction}   

In symplectic geometry, moment maps play an important role, leading to celebrated theorems: the Marsden-Weinstein-Meyer symplectic reduction, the Atiyah-Guillemin-Sternberg convexity theorem, and the classification of toric symplectic manifolds via Delzant polytopes.
Given an action of a Lie group $G$ on a symplectic manifold $M$, a moment map can be equivalently described as a Lie algebra morphism $\g\ \to (C^{\infty}(M),\{\;,\;\})$ realizing the generators of the action as Hamiltonian vector fields, where  $\{\;,\;\}$ is the Poisson bracket on functions that encodes the symplectic structure on $M$.

Here we consider 2-plectic forms, i.e. closed non-degenerate \emph{3-forms}.
In that case $C^{\infty}(M)$  no longer carries a Poisson bracket. \nft{However, it can be enlarged to a
Lie 2-algebra (a simple kind of $L_{\infty}$-algebra)  canonically attached to $\omega$, which we
denote by $L_{\infty}(M,\omega)$}.
A $\g$ moment map is then defined as an 
$L_{\infty}$-algebra morphism $\g\to L_{\infty}(M,\omega)$ compatible with the action, see  \nrt{\cite[Prop. 5.1]{FRZ}}.
There it is shown (in the wider setting of multisymplectic forms of arbitrary degree) that examples abound, and a \nrt{link with equivariant cohomology is established}. 

In this note we go one step further, replacing the Lie algebra $\g$ by a \emph{Lie 2-algebra} $L$ having $\g$ as its degree $0$ component. There are two main motivations for this:
\begin{itemize}
\item As a $\g$ moment map is an 
$L_{\infty}$-algebra morphism, from an algebraic point of view  it is natural to let the domain be an $L_{\infty}$-algebra rather than just a Lie algebra.
\item While an action by Hamiltonian vector fields might not admit a $\g$ moment map, it  always admits a moment map for a specific Lie 2-algebra {having $\g$ in degree zero}, provided that $H^1(M)=0$  (\cite[Prop. 9.10]{FRZ}, which we recall in Prop. \ref{prop:chrisrg}). {Further, even when $H^1(M)\neq 0$, it admits  a moment map for some Lie 2-algebra (see Prop. \ref{prop:gH}).}
\end{itemize}

\bigskip
\paragraph{\bf Main results}
 Let $(M,\omega)$ be a 2-plectic   manifold, and let $\g\to \mathfrak{X}(M),
x\mapsto v_{x}$ be a Lie algebra morphism taking values in Hamiltonian vector fields.
 Let $L$ be a Lie 2-algebra with vanishing unary bracket, whose degree $0$ component is the Lie algebra $\g$.

 \begin{itemize}
\item 
In \S\ref{sec:cohochar} we show that $L$ moment maps  are in bijection with the primitives of a certain 3-cocycle $\widetilde{\omega}$ 
(constructed out of the $\g$ action on $(M,\omega)$)
in the total complex $CE(L)\otimes \Omega(M)$.
Here $CE(L)$ denotes the Chevalley-Eilenberg complex  of the Lie 2-algebra $L$.

\item
While  the (very large) complex $CE(L)\otimes \Omega(M)$ captures all the   information about $L$ moment maps, it turns out that the complex $CE(L)$ itself captures much of this information, as we show in \S\ref{section:exuniq}.
The $\g$ action on $(M,\omega)$ defines a  3-cocycle $\omega_{3p}\in \wedge^3 \g^*$ in the Chevalley-Eilenberg \nrt{complex} of the Lie algebra $\g$ \nrt{\cite[$\S 9$]{FRZ}}. {Since the projection $L\to \g$ is an $L_{\infty}$-morphism, $\omega_{3p}$} can be regarded as a 3-cocycle in $CE(L)$.
We summarize Thm. \ref{cor:explicit} and Prop. \ref{prop:inverse}:

\begin{thmnonumber}

i) A necessary condition for the existence of $L$ moment maps is that $[\omega_{3p}]_{CE(L)}=0$.

ii) Assume $H^1(M)=0$ and $[\omega_{3p}]_{CE(L)}=0$. Then out of every primitive $\eta\in CE(L)^2$ of $\omega_{3p}$ one can construct a $L$ moment map $\phi^{\eta}$.
This construction recovers all $L$ moment maps, up to inner equivalence.
\end{thmnonumber}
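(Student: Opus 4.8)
The plan is to base the whole argument on the cohomological characterization of \S\ref{sec:cohochar}: an $L$ moment map is the same thing as a primitive $\Phi\in(CE(L)\otimes\Omega(M))^2$ of the 3-cocycle $\widetilde{\omega}$, i.e.\ $D\Phi=\widetilde{\omega}$ for the total differential $D$. I would then work inside $C:=CE(L)\otimes\Omega(M)$, filtered by Chevalley--Eilenberg degree. Since the $\g$-action on de Rham cohomology is trivial, the associated spectral sequence has $E_1^{p,q}=CE(L)^p\otimes H^q_{dR}(M)$ and $E_2^{p,q}=H^p(CE(L))\otimes H^q_{dR}(M)$. The hypothesis $H^1(M)=0$ enters exactly through $E_2^{\bullet,1}=0$, and connectedness of $M$ through $H^0(M)=\RR$; these are the inputs that keep the descent from stalling.

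\emph{Part (i).} Since $CE(L)\otimes\Omega^{\ge 1}(M)$ is a subcomplex of $C$, the quotient projection onto de Rham degree $0$ is a cochain map $q\colon C\to CE(L)\otimes C^{\infty}(M)$, the target carrying the differential $\delta_{CE}$ alone. By the way $\omega_{3p}$ is defined in \S\ref{section:exuniq}, $q(\widetilde{\omega})$ is a cocycle representing $[\omega_{3p}]$; hence if an $L$ moment map exists and $\widetilde{\omega}=D\Phi$, then $q(\widetilde{\omega})=\delta_{CE}(q\Phi)$ is a coboundary, so $[\omega_{3p}]_{CE(L)}=0$.

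\emph{Part (ii), construction.} Given a primitive $\eta\in CE(L)^2$ of $\omega_{3p}$, I would build $\Phi$ with $D\Phi=\widetilde{\omega}$ by obstruction theory along the filtration, using $\eta$ as the component of $\Phi$ in top Chevalley--Eilenberg degree: this settles the primary obstruction $[\omega_{3p}]$, and the remaining lower-filtration equations for $\Phi$ are elementary de Rham problems -- find a primitive of a closed 1-form, adjust by a constant -- solvable because $H^1(M)=0$ (the one potential secondary obstruction sits in $E_2^{\bullet,1}=0$) and $M$ is connected, with the lowest-order step feasible because the $v_x$ are Hamiltonian, so $\iota_{v_x}\omega$ is exact. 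Let $\phi^{\eta}$ be the $L$ moment map corresponding to the resulting $\Phi$; different choices of the de Rham primitives change $\Phi$ by a $D$-coboundary, hence change $\phi^{\eta}$ only within its inner-equivalence class.

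\emph{Part (ii), completeness, and the main obstacle.} Given an arbitrary $L$ moment map $\phi$ with primitive $\Phi$, put $\eta:=q(\Phi)$; then $\delta_{CE}\eta=q(\widetilde{\omega})$ represents $\omega_{3p}$, so after a coboundary correction $\eta$ is a primitive of $\omega_{3p}$, and carrying out the construction above while matching $\Phi$ in top filtration order should yield $\Phi^{\eta}-\Phi\in\ima D$. The decisive point -- which I expect to be the main obstacle -- is the dictionary that must accompany \S\ref{sec:cohochar}: that the bijection between $L$ moment maps and primitives of $\widetilde{\omega}$ intertwines inner equivalence of $L_{\infty}$-morphisms $L\to L_{\infty}(M,\omega)$ with the relation ``differ by a $D$-coboundary of total degree $1$''. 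Spelling this out requires care with the sign and coherence conventions for $L_{\infty}$-morphisms into a Lie 2-algebra; granting it, $\Phi^{\eta}-\Phi\in\ima D$ gives $\phi^{\eta}\sim\phi$, so the construction recovers every $L$ moment map up to inner equivalence. A lesser point to check is that the descent really closes using only $H^1(M)=0$, which should follow formally from $\widetilde{\omega}$ being a $D$-cocycle once $\eta$ is in place.
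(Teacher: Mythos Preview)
Your approach is sound but differs from the paper's, and your ``main obstacle'' dissolves once you look at the paper's definition.

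For part~(i) and the completeness half of~(ii), the paper uses the evaluation-at-a-point chain map $r\colon C\to CE(L)$, $\eta\otimes\alpha\mapsto \eta\cdot\alpha_p$, rather than your quotient $q$ onto de~Rham degree zero. Note that your $q$ lands in $CE(L)\otimes C^{\infty}(M)$, so $q(\widetilde{\omega})=\omega_3$ is function-valued and is not literally a representative of $[\omega_{3p}]_{CE(L)}$; composing with evaluation at $p$ recovers $r$ and repairs the type mismatch. For the construction in~(ii), the paper does not run a descent inside $C$. Instead it shows (Lemma~\ref{lem:solstarlimor}) that a primitive $\eta$ of $\omega_{3p}$ in $CE(L)$ is the same datum as an $L_{\infty}$-morphism $f\colon \h[1]\oplus\g\to \RR[1]\oplus_{-\omega_{3p}}\g$ over $\mathrm{Id}_{\g}$, and sets $\phi^{\eta}:=\gamma\circ f$, where $\gamma$ is the already-known moment map for the universal target $\RR[1]\oplus_{-\omega_{3p}}\g$ (Prop.~\ref{prop:chrisrg}). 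Your descent and the paper's factorization both consume the hypothesis $H^1(M)=0$ at the same place (finding a primitive for the closed 1-form $\gamma_1([x,y])-\iota(v_x\wedge v_y)\omega$), but the factorization yields explicit closed formulas for $\phi^{\eta}$ and makes the universal role of $\RR[1]\oplus_{-\omega_{3p}}\g$ visible, while your spectral-sequence argument is more self-contained and would generalize more mechanically.

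The point you flag as the main obstacle is not one: in this paper \emph{inner equivalence} of moment maps is \emph{defined} (in \S\ref{subsec:unique}) to mean that the corresponding elements of $C^2$ differ by a $d_{tot}$-coboundary. There is no separate $L_{\infty}$-homotopy notion being reconciled, and no sign or coherence check beyond what is already absorbed into Prop.~\ref{prop:primitive}. With that understood, completeness is exactly the K\"unneth computation you sketch: one checks $r(\phi^{\eta})=\eta$ (Lemma~\ref{lem:r}), whence $r(\mu-\phi^{r(\mu)})=0$; since $H^1(M)=0$ makes $[r]\colon H^2(C)\to H^2(CE(L))$ an isomorphism, the class $[\mu-\phi^{r(\mu)}]$ vanishes in $H^2(C)$.
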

Notice that the above is not only an existence statement, but provides a constructive way to obtain $L$ moment maps.

\item The  condition $[\omega_{3p}]_{CE(L)}=0$ is not very explicit, because it is expressed in terms of the (quite large) complex ${CE(L)}$.
In \S\ref{sec:revisit} we express this condition in terms of the familiar Lie algebra cohomology of $\g$, see Prop. \ref{prop:image}.  As a consequence we can provide criteria -- which are easy to check in practice -- for the existence or non-existence of   $L$ moment maps. They are expressed in terms of a certain 3-cocycle $c_{red}$ of the Lie algebra $\g$ with values in a trivial representation, induced by the trinary bracket of the Lie 2-algebra $L$. We summarize \nft{these criteria} as follows (Prop. \ref{prop:cred0} and Prop. \ref{prop:1dim}):
\begin{propnonumber} 
Assume $[\omega_{3p}]_{\g}\neq 0$.

i) If $[c_{red}]_{\g}=0$, then there exists no $L$ moment map.

ii) Assume $H^1(M)=0$. If  $H^{3}(\g)$ is one-dimensional  and {$[c_{red}]_{\g} \neq 0$}, then there exists a $L$ moment map . 
\end{propnonumber}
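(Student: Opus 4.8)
The plan is to reduce both assertions to the cohomological criterion of Proposition~\ref{prop:image}, together with the Theorem summarizing Thm.~\ref{cor:explicit} and Prop.~\ref{prop:inverse}. Proposition~\ref{prop:image} re-expresses the vanishing of $[\omega_{3p}]_{CE(L)}$ purely in terms of the Lie algebra $\g$: it singles out a subspace $W\subseteq H^3(\g)$, determined by the reduced trinary cocycle $c_{red}$, such that $[\omega_{3p}]_{CE(L)}=0$ holds if and only if $[\omega_{3p}]_{\g}\in W$. I will use two features of this subspace that are built into the definition of $c_{red}$ (namely, as the image of the trinary bracket of $L$ in the coinvariants of its degree $1$ component): first, $W=\{0\}$ exactly when $[c_{red}]_{\g}=0$; and second, $W$ always contains the line spanned by $[c_{red}]_{\g}$, so $W=H^3(\g)$ as soon as $[c_{red}]_{\g}\neq 0$ and $\dim H^3(\g)=1$. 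Extracting these two features from the exact statement of Prop.~\ref{prop:image} is the one step that needs care.

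For part (i), suppose $[c_{red}]_{\g}=0$. Then $W=\{0\}$, so Proposition~\ref{prop:image} shows that $[\omega_{3p}]_{CE(L)}=0$ would force $[\omega_{3p}]_{\g}=0$, contradicting the standing assumption $[\omega_{3p}]_{\g}\neq 0$. Hence $[\omega_{3p}]_{CE(L)}\neq 0$, and part (i) of the Theorem — the necessary condition for the existence of an $L$ moment map — rules out any such map.

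For part (ii), suppose in addition that $H^1(M)=0$, that $\dim H^3(\g)=1$ and that $[c_{red}]_{\g}\neq 0$. Then $W=H^3(\g)$, so in particular $[\omega_{3p}]_{\g}\in W$, and Proposition~\ref{prop:image} gives $[\omega_{3p}]_{CE(L)}=0$. Consequently $\omega_{3p}$ admits a primitive $\eta\in CE(L)^2$, and since $H^1(M)=0$ the construction in part (ii) of the Theorem turns $\eta$ into an $L$ moment map $\phi^{\eta}$. The only genuine obstacle is the bookkeeping flagged above: one must check, from the precise form of Prop.~\ref{prop:image}, that the relevant subspace of $H^3(\g)$ is trivial precisely when $[c_{red}]_{\g}=0$ and exhausts $H^3(\g)$ under the hypotheses of (ii); once this is in place, both statements are one-line consequences of already-established results.
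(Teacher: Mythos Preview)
Your approach is correct and essentially identical to the paper's: your subspace $W$ is precisely the image of the map $\Psi$ of \eqref{eq:Psi}/\eqref{eq:Psired}, and the paper's proofs of Prop.~\ref{prop:cred0} and Prop.~\ref{prop:1dim} amount exactly to showing that $\Psi$ is the zero map iff $[c_{red}]_{\g}=0$, and is surjective when $[c_{red}]_{\g}\neq 0$ and $\dim H^3(\g)=1$. One minor slip to clean up: the phrase ``$W$ contains the line spanned by $[c_{red}]_{\g}$'' is a type mismatch, since $[c_{red}]_{\g}\in H^3(\g)\otimes(\h/[\g,\h])$ rather than $H^3(\g)$; but you correctly flag this step as needing care, and the conclusion you actually use (that $W\neq 0$ when $[c_{red}]_{\g}\neq 0$) follows immediately from \eqref{eq:Psired}.
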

We also give an {alternative} characterization of the condition $[\omega_{3p}]_{CE(L)}  =0$ in Prop. \ref{prop:which2}


\end{itemize}

In this diagram we summarize  the relation between the cohomologies (\nft{and the relevant classes}) of the three complexes that appear in \S\ref{sec:cohochar}, \S\ref{section:exuniq}, \S\ref{sec:revisit} respectively:
\begin{align*}
H({CE(L)}\otimes \Omega(M))  \overset{r}{\longrightarrow}  & \;\;H({CE(L)})\;\;  \longleftarrow  H(\g)\\
[\widetilde{\omega}]  \hspace{8mm} \mapsto \;\;&\;\;[\omega_{3p}]_{CE(L)}  \;\;\;\mapsfrom  \;[\omega_{3p}]_{\g}  
\end{align*} 

 \bigskip
\paragraph{\bf Generalizations:}
All the results \nft{of this note also hold for (possibly degenerate) closed 3-forms}, with simple modifications. 

Further, we expect similar results to hold when $\omega$ is \nrt{an} $n$-plectic form and $L$ is a Lie $n$-algebra, for arbitrary $n\ge 2$. For the results of 
\S\ref{sec:cohochar} we expect this because in the \nft{defining conditions for}
$L_{\infty}$-morphisms  from a Lie $n$-algebra to  $L_{\infty}(M,\omega)$, all the  terms \nft{that are quadratic and higher (in the morphism)} can be expressed using the prescribed Lie algebra action\footnote{For this, one uses that $L_{\infty}(M,\omega)$ satisfies property (P) of \cite[\S3.2]{FRZ}.}.
\nft{For the results of \S\ref{section:exuniq}, we expect this} because of \cite[Prop. 9.10]{FRZ}.  
However, for arbitrary $n$, we do not expect that existence criteria can be phrased in terms of Lie algebra cohomology as explicitly as in \S\ref{sec:revisit}.

\bigskip
\paragraph{\bf Conventions:}
Given a graded vector space $V=\oplus_{i\in \ZZ}V^i$ and an integer $k$, we denote by $V[k]$ the graded vector space obtained from $V$ by shifting the degrees by $k$. Explicitly, its degree $i$ component is $(V[k])^i=V^{i+k}$.
\bigskip

\paragraph{\bf Acknowledgements:}
M.Z. thanks Domenico Fiorenza for useful conversations. We thank the referee for her/his comments and insights. We acknowledge partial support by the long term structural funding -- Methusalem grant of the Flemish Government, the FWO under EOS project G0H4518N, the FWO research project G083118N (Belgium).

\section{Lie algebra actions on multisymplectic manifolds}\label{sec:multisympl}
We begin by recalling the relevant notions from multisymplectic geometry. 
{Throughout this note, $M$ denotes a   connected manifold.}

\subsection{Multisymplectic manifolds}

\begin{defi}
A pair $(M,\omega)$ is an \textit{n}\emph{-plectic} manifold, if  $\omega$ is a closed nondegenerate $n+1$ form, i.e., \[d_{dR}\omega=0\] 

\noindent and the map $i{\_}\omega:TM\to \wedge^{n}T^{\ast}M, v\mapsto i_{v}\omega$
is injective.\end{defi}
\begin{defi}
An $(n-1)$-form $\alpha$ on an $n$-plectic manifold $(M,\omega)$ is \emph{Hamiltonian} iff there exists a vector field $v_{\alpha}\in\mathfrak{X}(M)$ such that \[d_{dR}\alpha=-i_{v_{\alpha}}\omega.\]
The vector field $v_{\alpha}$ is the \emph{Hamiltonian vector field} corresponding to $\alpha$.
\end{defi}

\begin{defi}
A \emph{Lie $n$-algebra} is an $L_{\infty}$-algebra concentrated in degrees $0,-1,...,1-n$.
\end{defi}
For definition and properties of $L_{\infty}$-algebras see \cite{lada1993introduction}.
\nbt{For the class of $L_{\infty}$-algebras used in this note, we will spell out the structure in \S \ref{sec:momap2}.}

\vspace {0.2cm} An $n$-plectic manifold is canonically equipped with a Lie $n$-algebra $L_{\infty}(M,\omega)$ defined in the following way \cite[Thm. 5.2]{RogersL}:
\begin{defi}
Given an $n$-plectic manifold, there is a corresponding Lie $n$-algebra $L_{\infty}(M,\omega)=(L,\{[\;,\;...\;,\; ]'_k\})$ with underlying graded vector space 
\[
L^i=\begin{cases}
\Omega_{\textrm{Ham}}^{n-1}(M)\hspace{0.5cm}i=0\\
\Omega^{n-1+i}(M)\hspace{0.2cm}1-n\leq i<0
\end{cases}
\]
and maps $\{[\;,\;...\;,\; ]'_k:L^{\otimes k}\to L\;|\;1\leq k<\infty\} $ defined as 
\[[\alpha]'_1=d_{dR}\alpha,\hspace{0,2cm}\textrm{if }|\alpha|<0\]
and, for $k>1$, \[
[\alpha_{1},\;...\;,\alpha_{k} ]'_k=\begin{cases}
\zeta(k)i(v_{\alpha_{1}}\wedge...\wedge v_{\alpha_{k}})\omega \hspace{0.2cm}\textrm{if }|\alpha_{1}\otimes...\otimes\alpha_{k}|=0\\
0\hspace{3.7cm}\textrm{if }|\alpha_{1}\otimes...\otimes\alpha_{k}|<0,
\end{cases}
\]
where $v_{{\alpha}_i}$ is any Hamiltonian vector field associated to $\alpha_i\in{\Omega}_{\textrm{Ham}}^{n-1}(M)$, $\zeta(k)=-(-1)^{\frac{k(k+1)}{2}}$, 
and $i(\dots)$ denotes contraction with a multivector field: $i(v_{\alpha_{1}}\wedge...\wedge v_{\alpha_{k}})\omega=\iota_{v_{\alpha_{k}}}...\iota_{v_{\alpha_{1}}}\omega$. 
\end{defi}

\vspace{0.2cm}
\subsection{Homotopy moment maps for Lie algebra actions}\label{subsec:liealgaction}
Following \cite{FRZ} we recall:
\begin{defi}\label{def:gmomap}
Let {$\g\to \mathfrak{X}(M), {x\mapsto v_x}$} be a Lie algebra action on an $n$-plectic manifold $(M,\omega)$ by Hamiltonian vector fields. 
A \emph{homotopy moment map} for this action \nbt{(or a $\g$ \emph{moment map} for short)} is an $L_{\infty}$-morphism \[(f_k):\g\to L_{\infty}(M,\omega)\] 
such that \begin{equation*}-i_{v_{x}}\omega=d_{dR}(f_1(x))\hspace{0.5cm}\textrm{for all }x\in\g.\end{equation*}
\end{defi}
\begin{rem}\label{Rem:momapeq}  Such a morphism consists of a collection of $n$ graded skew-symmetric maps \[f_k:{\g}^{\otimes k}\to L, \hspace{0.2cm}1\leq k\leq n\] of degree $|f_k|=1-k$, such that \begin{equation*}-i_{v_{x}}\omega=d_{dR}(f_1(x))\end{equation*}
and the following equations hold:
 \begin{multline*}
\sum\limits_{1\leq i<j\leq k}(-1)^{i+j+1}f_{k-1}([x_i,x_j],x_1,...,\widehat{x_i},...,\widehat{x_j},...,x_k)\\=d_{dR}f_k(x_1,...,x_k)+\zeta(k)i(v_{x_1}\wedge...\wedge v_{x_k})\omega
\end{multline*}
for $2\leq k \leq n$ and
\begin{multline*}
\sum\limits_{1\leq i<j\leq n+1}(-1)^{i+j+1}f_{n}([x_i,x_j],x_1,...,\widehat{x_i},...,\widehat{x_j},...,x_{n+1})=\zeta(n+1)i(v_{x_1}\wedge...\wedge v_{x_{n+1}})\omega,
\end{multline*}
where $v_{x_i}$  is the vector field associated to $x_i$ via the $\g$-action.
\end{rem}

{Fixing a point $p\in M$ defines a degree $(n+1)$-cocycle $\omega_{(n+1)p}$ in the Chevalley-Eilenberg complex of the Lie algebra $\g$, by 
\begin{equation*}
\omega_{(n+1)p}(x_1,..,x_{n+1}):=\omega(v_{x_1},...,v_{x_{n+1}})|_p \text{ for all } x_1,..,x_{n+1}\in \g.  
\end{equation*}
\nbt{The cohomology class $[\omega_{(n+1)p}]_{\g}$ is independent of the choice of point $p$.}
Combining  \cite[Prop. 9.5]{FRZ} and \cite[Thm. 9.6]{FRZ} one has:
\begin{prop}\label{prop:9596}
 The existence of a $\g$ moment map implies that $[\omega_{(n+1)p}]_{\g}=0$. The converse holds 
 if   $H^i(M)=0$ for $1\leq i\leq n-1$.
\end{prop}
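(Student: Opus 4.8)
The plan is to prove the two implications separately: the first by a short direct computation, the second by an obstruction argument run inside a double complex.

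\emph{Necessity.} Given a $\g$ moment map $(f_k)$, I would feed $x_1,\dots,x_{n+1}\in\g$ into the last of the equations in Remark~\ref{Rem:momapeq} and evaluate at the point $p$. Then $f_n$ becomes a cochain $f_n|_p\in\wedge^n\g^*$, the left-hand side becomes $-(d_{CE}(f_n|_p))(x_1,\dots,x_{n+1})$, where $d_{CE}$ is the Chevalley--Eilenberg differential of $\g$, and the right-hand side becomes $\zeta(n+1)\,i(v_{x_1}\wedge\cdots\wedge v_{x_{n+1}})\omega|_p=\zeta(n+1)\,\omega_{(n+1)p}(x_1,\dots,x_{n+1})$. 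Since $\zeta(n+1)=\pm1$, this exhibits $\omega_{(n+1)p}$ as a Chevalley--Eilenberg coboundary, so $[\omega_{(n+1)p}]_\g=0$. Only the top equation is used and no hypothesis on $M$; incidentally this reproves at the cohomology level that the class does not depend on $p$.

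\emph{Sufficiency.} Assume $H^i(M)=0$ for $1\le i\le n-1$ and $[\omega_{(n+1)p}]_\g=0$, and let me build $f_1,\dots,f_n$. I would use the first-quadrant double complex $\mathcal{K}^{p,q}=\wedge^p\g^*\otimes\Omega^q(M)$ with horizontal differential $d_{CE}$ (Chevalley--Eilenberg of $\g$ with trivial coefficients), vertical differential $d_{dR}$ and total differential $D$, together with its subcomplex $\mathcal{K}_{\ge1}=\bigoplus_{p\ge1}\mathcal{K}^{p,\bullet}$, and set $\widetilde\omega=\sum_{k=1}^{n+1}\widetilde\omega^{(k)}$ with $\widetilde\omega^{(k)}(x_1,\dots,x_k)=\zeta(k)\,i(v_{x_1}\wedge\cdots\wedge v_{x_k})\omega\in\mathcal{K}^{k,n+1-k}$. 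The first point is that $\widetilde\omega$ is a $D$-cocycle of total degree $n+1$: this is the iterated Cartan formula for $d_{dR}(i(v_{x_1}\wedge\cdots\wedge v_{x_k})\omega)$, simplified using $d_{dR}\omega=0$, $[v_x,v_y]=v_{[x,y]}$, the identity $L_{v_x}\omega=0$ (valid because $v_x$ is Hamiltonian, so $\iota_{v_x}\omega$ is exact), and $i(v_{x_1}\wedge\cdots\wedge v_{x_{n+2}})\omega=0$ for degree reasons --- the factors $\zeta(k)$ being precisely what makes the signs cancel. The second point is purely bookkeeping: a tuple of graded-skew-symmetric maps $(f_1,\dots,f_n)$ solves the equations of Remark~\ref{Rem:momapeq} if and only if $\Phi:=\sum_{k=1}^n f_k$ --- a total-degree-$n$ element of $\mathcal{K}_{\ge1}$, automatically supported in $1\le p\le n$ --- satisfies $D\Phi=\widetilde\omega$, the condition that $f_1(x)$ be Hamiltonian being automatic from $d_{dR}f_1(x)=-\iota_{v_x}\omega$. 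Hence a $\g$ moment map exists if and only if $[\widetilde\omega]=0$ in $H^{n+1}(\mathcal{K}_{\ge1},D)$. To settle this, I would run the spectral sequence of $\mathcal{K}_{\ge1}$ filtered by $p$, with $E_2^{p,q}=H^p(\g)\otimes H^q(M)$ for $p\ge1$, and peel $\widetilde\omega$ off one filtration level at a time: its $p=1$ part is $x\mapsto\zeta(1)\iota_{v_x}\omega$, which is $d_{dR}$-exact by Hamiltonicity, so one picks $f_1$ with $d_{dR}f_1(x)=-\iota_{v_x}\omega$, replaces $\widetilde\omega$ by the cocycle $\widetilde\omega-Df_1$ (now supported in $p\ge2$), and continues; for $k=2,\dots,n$ the $p=k$ part is a $d_{dR}$-closed element of $\wedge^k\g^*\otimes\Omega^{n+1-k}(M)$ with $1\le n+1-k\le n-1$, hence $d_{dR}$-exact by hypothesis, so one chooses $f_k$ (primitives of the values on a basis of $\g$, extended skew-multilinearly), subtracts $Df_k$, and moves to $p\ge k+1$. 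After $n$ steps the remainder $\rho:=\widetilde\omega-D(f_1+\cdots+f_n)$ lies in $\mathcal{K}^{n+1,0}$, and $D\rho=0$ forces $d_{dR}\rho=0$ --- so $\rho$ is constant on $M$, which is connected --- and $d_{CE}\rho=0$, i.e. $\rho$ is a Chevalley--Eilenberg $(n+1)$-cocycle of $\g$. Unwinding the definition of $\rho$ in bidegree $(n+1,0)$ gives $\rho=\zeta(n+1)\,\omega_{(n+1)p}-d_{CE}(f_n|_p)$, so $[\rho]_\g=\zeta(n+1)[\omega_{(n+1)p}]_\g$; a check of the spectral-sequence differentials (using the vanishing of $H^1(M),\dots,H^{n-1}(M)$) shows $E_\infty^{n+1,0}=H^{n+1}(\g)$, so $[\widetilde\omega]=0$ if and only if $[\rho]_\g=0$, i.e. if and only if $[\omega_{(n+1)p}]_\g=0$. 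When this holds, I would write $\rho=d_{CE}\sigma$ with $\sigma\in\wedge^n\g^*$ and take $(f_1,\dots,f_{n-1},f_n+\sigma)$ --- adding the constant cochain $\sigma$ spoils no lower equation --- which is the desired $\g$ moment map, obtained constructively.

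\emph{Main obstacle.} The difficulty is organizational, not conceptual: verifying that $\widetilde\omega$ is a $D$-cocycle, that is, establishing the iterated Cartan identity with all its signs --- which is exactly what the normalizing constants $\zeta(k)$ are there to absorb --- and, relatedly, keeping the Koszul signs straight when going between Remark~\ref{Rem:momapeq} and the single identity $D\Phi=\widetilde\omega$. The one structural point is the degree count used above: the hypotheses $H^1(M)=\cdots=H^{n-1}(M)=0$ cover precisely the de Rham degrees $n-1,\dots,1$ that occur as obstructions for $2\le k\le n$, while the two endpoint obstructions --- at $k=1$ and at filtration degree $n+1$ --- are removed respectively by Hamiltonicity and by the hypothesis $[\omega_{(n+1)p}]_\g=0$.
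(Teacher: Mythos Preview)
Your proposal is correct. The paper does not actually prove this proposition but merely recalls it by citing \cite[Prop.~9.5 and Thm.~9.6]{FRZ}; your argument is precisely the one underlying those references (and \cite{FLRZ}), namely evaluating the top $L_\infty$-morphism equation at~$p$ for necessity, and for sufficiency running the staircase in the double complex $\wedge^{\bullet}\g^*\otimes\Omega^{\bullet}(M)$ using the $D$-closedness of $\widetilde\omega$ together with the de~Rham acyclicity hypotheses --- exactly the framework the present paper adopts (in the case $n=2$) in \S\ref{sec:cohochar} and \S\ref{subsec:obstr}.
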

}   


\section{Lie 2-algebras actions on 2-plectic manifolds}\label{sec:Lie2momap}

\nbt{Since the ``observables'' $L_{\infty}(M,\omega)$ on an $n$-plectic manifold form an $L_{\infty}$-algebra, it is natural to relax the definition of $\g$ moment map by allowing $\g$ to be an $L_{\infty}$-algebra rather than just a Lie algebra. In this section we do this in the simplest case, i.e. for $n=2$.}	

\subsection{Lie 2-algebras and their Chevalley-Eilenberg complex}\label{subsec:lie2} 
{Recall that a  {Lie $2$-algebra} is an $L_{\infty}$-algebra concentrated in degrees $0,-1$.}  
{This means that the underlying graded vector space is of the form $\h[1]\oplus \g$ , where $\h$ and $\g$ are ordinary vector spaces\footnote{Hence $\h[1]$ is a graded vector space concentrated in degree $-1$.}, which we assume to be finite dimensional.
The multibrackets $\delta,[\;,\;],[\;,\;,\;]$ are as follows}
\begin{align*}
\delta&:\h[1]\to\g\\
[\;,\;]&:{\wedge}^{2}\g\to\g\\
\nbt{[\;,\;]}&:\nbt{\g\otimes\h[1]\to\h[1]}\\
[\;,\;,\;]&:{\wedge}^{3}\g\to\h[1]
\end{align*}
{and satisfy {higher Jacobi identities} that are made explicit in \cite[Lemma 4.3.3]{baez2004higher}.}

We now recall the Chevalley-Eilenberg complex of a Lie 2-algebra (see, e.g.,\cite[\S 6]{sati2008lie}). 
 Consider the maps (see \cite{lada1993introduction} or  \cite[\S 2.2]{ryvkin2016observables})
\begin{align*}
l_1&:\h[2]\to\g[1]\hspace{2.05cm}l_1(h)=-\delta(h)\\
l_2&:{S}^{2}(\g[1])\to\g[1]\hspace{1.33cm} l_2(x,y)=[x,y]\\
l_2&:\g[1]\otimes \h[2]\to\h[2]\hspace{0.94cm}l_2(x,h)=[x,h],\hspace{0.1cm} l_2(h,x)=-[h,x]\\
l_3&:{S}^{3}(\g[1])\to\h[2]\hspace{1.31cm} l_3(x,y,z)=-[x,y,z]\\
\end{align*}
where \nbt{$S$ denotes the {graded} symmetric algebra}, $x,y\in \g, h\in \h$.
\nbt{We can extend the $l_i$ to maps
$l_i:S^{\bullet \geq 1}(\h[2]\oplus \g[1])\to S^{\bullet \geq1}(\h[2]\oplus \g[1])$ of degree 1,} by the following formulae: 

\begin{align*}
l_2(x_1x_2...x_n)&=\sum_{\sigma\in Sh(2,n-2)} \epsilon(\sigma,x_1,...,x_n)l_2(x_{\sigma(1)},x_{\sigma(2)})x_{\sigma(3)}... x_{\sigma(n)}
\end{align*}
{and similarly for $l_1$ and $l_3$.}
Here $\epsilon(\sigma,x_1,...,x_n)$ is the Koszul sign of $\sigma$ defined by the equality
$x_1 ... x_n=\epsilon(\sigma,x_1,...,x_n)x_{\sigma(1)} ...  x_{\sigma(n)}$ holding in $S^{\bullet \geq 1}(\h[2]\oplus \g[1])$, \nft{and the sum is over all $(2,n-2)$-unshuffles}.
Note that the $x_i$ are elements of $\h[2]\oplus\g[1]$, and their respective degrees are the ones in $\h[2]\oplus\g[1]$.
By dualization, we obtain the following maps $d_i:S^{\bullet \geq 1}(\h[2]\oplus \g[1])^{\ast}\to S^{\bullet \geq 1}(\h[2]\oplus \g[1])^{\ast}$: {for all $\xi\in  
S^{k}(\h[2]\oplus \g[1])^{\ast}$,}
  \begin{align*}
d_1(\xi)(x_1...x_k)&=\xi(l_1(x_1...x_k))\\
d_2(\xi)(x_1...x_{k+1})&=\xi(l_2(x_1...x_{k+1}))\\
d_3(\xi)(x_1...x_{k+2})&=\xi(l_3(x_1...x_{k+2})).\\
  \end{align*}
We combine the $d_i$ into one map  $\nrt{d_{CE(L)}}:=d_1-d_2+d_3:S^{\bullet \geq 1}(\h[2]\oplus \g[1])^{\ast}\to S^{\bullet \geq 1}(\h[2]\oplus \g[1])^{\ast}$ of degree 1.
The fact that $\nrt{d^2_{CE(L)}}=0$ follows from the {higher} Jacobi identities for $L_{\infty}$-algebras (see \cite{lada1993introduction} or  \cite[Lemma 2.13]{ryvkin2016observables}).

\begin{defi}
The complex $${CE(L)}:=(S^{\bullet \geq 1}(\h[2]\oplus \g[1])^{\ast}, \nrt{d_{CE(L)}})$$ is the
\emph{Chevalley-Eilenberg complex} of the Lie 2-algebra $\h[1]\oplus \g$. 
\end{defi}

\subsection{Homotopy moment maps for Lie 2-algebras}\label{sec:momap2}
From now on, for simplicity, we restrict ourselves to the case of  2-plectic forms. {Until the end of this note we assume the following set-up:}

\begin{center}
\fbox{
\parbox[c]{12.6cm}{\begin{center}
$(M,\omega)$ is a   2-plectic manifold,\\
$\g$ is a Lie algebra,\\
$\g\to \mathfrak{X}(M), {x\mapsto v_x}$ is a Lie algebra morphism, such that the $v_x$ are Hamiltonian vector fields.
\end{center}
}}
 \end{center}

{We extend Def. \ref{def:gmomap}. Let  $\h[1]\oplus \g$ be a Lie 2-algebra, whose binary bracket extends the given Lie algebra structure on $\g$.}
\begin{defi}
 A \emph{homotopy moment map}  for the Lie 2-algebra   $\h[1]\oplus \g$ \nbt{(or $\h[1]\oplus \g$ \emph{moment map} for short)} is an $L_{\infty}$-morphism $(f_1,f_2)$
 from $(\h[1]\oplus \g,\delta,[\;,\;],[\;,\;,\;])$ to $(L_{\infty}(M,\omega),d_{dR},[\;,\;]',[\;,\;,\;]')$ such that {for all $x\in \g$} \[-i_{v_x}\omega=d_{dR}(f_1(x)).\]\end{defi}

\begin{rem}\label{rem:linftymor} Explicitly, this means that the components
\begin{align*}
f_{1}:\g&\to{\Omega^{1}_{\textrm{Ham}}}(M),\\
f_{1}: \h&\to C^{\infty}(M),\\
f_{2}:{\wedge}^{2}\g&\to C^{\infty}(M),
\end{align*}
satisfy the following equations {for all $x,y,z\in \g$ and $h\in\h$} (\cite[Def. 6.2]{RogersPre}, see also \cite[Def. 5.3]{HDirac}): 
\begin{align}
\label{eq:fdiff} d_{dR}\circ f_1&=f_1\circ\delta\\
\nonumber d_{dR}f_2(x,y)&=f_1[x,y]-[f_1(x),f_1(y)]'\\
\nonumber f_2(\delta h,x)&=f_1[h,x],\\
\nonumber f_1[x,y,z]-[f_1(x),f_1(y),f_1(z)]'&=f_2(x,[y,z])-f_2(y,[x,z])+f_2(z,[x,y]).
\end{align}
\end{rem}

{Notice that the image of $\delta\colon \h\to \g$ lies in the kernel of the infinitesimal action $\g\to \mathfrak{X}(M)$.}
 Indeed, from equation {\eqref{eq:fdiff}} it follows that the Hamiltonian 1-form $f_1(\delta(h))$ is exact, implying the vanishing of its   Hamiltonian vector field $v_{f_1(\delta(h))}$, which is the infinitesimal generator of the action corresponding to $\delta(h)$.
 {As a consequence,} if the infinitesimal $\g$-action is {effective (in the sense that the Lie algebra morphism $\g\to \mathfrak{X}(M)$ has trivial kernel)}
then the unary bracket  $\delta$  of $\h[1]\oplus \g$ necessarily  vanishes.

Hence, from now on in this note we assume the following (recall that a $L_{\infty}$-algebra is called \emph{minimal} if it has vanishing unary map):
  \begin{center}
\fbox{
\parbox[c]{12.6cm}{\begin{center}
the Lie 2-algebra $\h[1]\oplus \g$ is minimal.
\end{center}
}}
 \end{center}

\begin{rem}
 Any Lie $n$-algebra is $L_{\infty}$-quasi-isomorphic to a minimal Lie $n$-algebra (see \cite[$\S7$]{doubek2007deformation} up to and including Cor. 7.5). Hence this assumption does not imply any loss of generality. We thank Chris Rogers for pointing this out to us. 
\end{rem}

 \nbt{Such a Lie 2-algebra admits a simple well-known description, that we now recall \nbt{(see \cite[Thm. 55]{baez2004higher} for more details)}.
\begin{lemma}\label{lem:minimal}
A minimal Lie 2-algebra corresponds to the following data: 
  \begin{itemize}
\item a Lie algebra $\g$,
\item  a $\g$-representation $\h$,
\item a 3-cocycle $c$ for this representation.
\end{itemize}
The representation  of $\g$ on $\h$ is given by the binary bracket, and the cocycle for this representation is given by the trinary bracket.   
\end{lemma}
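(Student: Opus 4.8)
The plan is to read off the three pieces of data by specializing the higher Jacobi identities of \cite[Lemma 4.3.3]{baez2004higher} to the minimal case. Recall that a minimal Lie $2$-algebra is exactly an $L_\infty$-structure on the graded vector space $L=\h[1]\oplus\g$ (so $\g$ in degree $0$, $\h$ in degree $-1$) whose unary bracket vanishes: a family of multibrackets $l_1=0,l_2,l_3,\dots$ on the shifted space $\h[2]\oplus\g[1]$ satisfying the relations $\sum_{i+j=n+1}\pm\,l_i\circ l_j=0$ for $n\geq1$ (equivalently, as recalled above, $d_{CE(L)}^2=0$). The first step is a degree count: a degree $+1$ map out of $S^k(\h[2]\oplus\g[1])$ must send a monomial with $a$ factors from $\h[2]$, which lies in degree $-k-a$, into degree $-1$ or $-2$, forcing $k+a\in\{2,3\}$ and hence $k\leq3$; the same count shows $l_3$ is supported on $S^3(\g[1])$ and $l_2$ on $S^2(\g[1])\oplus(\g[1]\otimes\h[2])$. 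Thus the only non-vacuous relations are $l_2\circ l_2=0$ (for $n=3$) and $l_2\circ l_3+l_3\circ l_2=0$ (for $n=4$); the relation $l_3\circ l_3=0$ for $n=5$ holds automatically, since the inner $l_3$ lands in $\h[2]$, on which $l_3$ vanishes.

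Next, unwinding $l_2\circ l_2=0$ on each type of homogeneous input: on three arguments from $\g[1]$ it is, after the standard sign dictionary identifying $l_2$ with $[\;,\;]$, the Jacobi identity for the binary bracket --- so $\g$ is a Lie algebra; on two arguments $x,y\in\g[1]$ and one argument $h\in\h[2]$, where only the instances $l_2|_{S^2\g[1]}=[\;,\;]$ and $\rho(x)h:=l_2(x,h)$ occur, the unshuffle sum collapses to $\rho([x,y])=\rho(x)\rho(y)-\rho(y)\rho(x)$ --- so $\rho$ is a representation of $\g$ on $\h$; and on inputs involving two or more factors from $\h[2]$ the relation is trivial, because $l_2$ of two $\h[2]$-elements (and $l_2$ applied after such) vanishes.

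Then, unwinding $l_2\circ l_3+l_3\circ l_2=0$: an argument in $\h[2]$ makes both summands vanish (in each unshuffle term $l_3$ either receives an argument outside $\g[1]$, or receives three $\g[1]$-arguments and its $\h[2]$-valued output is then multiplied by $l_2$ with the leftover $\h[2]$-argument, where $l_2=0$), so the only non-vacuous instance is on four arguments $x_1,\dots,x_4\in\g[1]$. Writing $c\colon\wedge^3\g\to\h$ for the trinary bracket, so that $l_3=-c$ on $S^3(\g[1])$, the summand $l_3\circ l_2$ produces, over the $(2,2)$-unshuffles, precisely the contributions $\pm c([x_i,x_j],x_k,x_l)$, and the summand $l_2\circ l_3$ produces, over the $(3,1)$-unshuffles, precisely the contributions $\pm\rho(x_i)\,c(x_j,x_k,x_l)$; together they assert that $c$ is annihilated by the Chevalley--Eilenberg differential of $\g$ with values in the module $\h$, i.e.\ $c$ is a $3$-cocycle for this representation. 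This gives the forward direction; the converse is the same computation run backwards: given $(\g,\rho,c)$ as in the statement, the recipe $l_1=0$, $l_2|_{S^2\g[1]}=[\;,\;]$, $l_2|_{\g[1]\otimes\h[2]}=\rho$, $l_3|_{S^3(\g[1])}=-c$ and all other components zero defines an $L_\infty$-structure on $\h[2]\oplus\g[1]$, since its non-vacuous $L_\infty$-relations reduce precisely to the Jacobi identity for $[\;,\;]$, the representation axiom for $\rho$ and the cocycle condition for $c$.

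I expect the only genuine work to be sign bookkeeping --- matching the Koszul signs coming from the unshuffle sums on $\h[2]\oplus\g[1]$ with the conventional Chevalley--Eilenberg signs $(-1)^i$ and $(-1)^{i+j}$, and tracking the d\'ecalage factors relating the $l_k$ to the graded skew-symmetric brackets $\delta,[\;,\;],[\;,\;,\;]$. As none of this is conceptual, and it is carried out --- together with the classification up to equivalence --- in \cite[Thm. 55]{baez2004higher}, I would present the computation tersely and refer there for the signs.
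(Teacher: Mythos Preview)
Your proposal is correct and follows essentially the same approach as the paper: specialize the higher Jacobi identities to the minimal case and identify the resulting relations with the Jacobi identity for $\g$, the representation axiom for $\h$, and the Chevalley--Eilenberg cocycle condition for $c$. Your version is somewhat more thorough than the paper's proof, which simply lists the three reduced identities \eqref{eq:liealg}--\eqref{eq:cocycle} and reads off the data; in particular you make explicit the degree count constraining the brackets, the vanishing of $l_3\circ l_3$, and the converse direction, all of which the paper leaves implicit or defers to \cite{baez2004higher}.
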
 
 }
 \begin{proof}
\nbt{ Let $\h[1]\oplus \g$ be a minimal Lie 2-algebra}.
{The higher Jacobi identities reduce to the following,  for $h \in\h$ and $x,y,z,u\in \g$:}
  
  \begin{align}
 \label{eq:liealg} [[x,y],z]+[[y,z],x]+[[z,x],y] &=0\\
  \label{eq:rep}[[x,y],h]-[x,[y,h]]-[[x,h],z]&=0\\
  \label{eq:cocycle}[x,[y,z,u]]-[y,[x,z,u]]+[z,[x,y,u]]-[u,[x,y,z]]&=  [[x,y],z,u]+[y,[x,z],u]+[y,z,[x,u]]\\ \nonumber &-[x,[y,z],u]-[x,z,[y,u]]+[x,y,[z,u]]\end{align}
  
\nbt{The degree $0$ component $\g$ is a Lie algebra due to \eqref{eq:liealg}.}  
That $\h$ is a representation of $\g$ follows from the fact that $[\g,\h]$ lands in $\h$, and from \eqref{eq:rep}. That the trinary bracket $[\;,\;,\;]=:c:{\wedge}^3\g\rightarrow\h$ is a 3-cocycle in the Chevalley-Eilenberg complex of $\g$ with values in $\h$ follows from \eqref{eq:cocycle}:\begin{align*}
    (d_{\g}c)(x,y,z,u)&=x\cdot c(y,z,u)-y\cdot c(x,z,u)+z\cdot c(x,y,u)-u\cdot c(x,y,z)\\&-c([x,y],z,u)+c([x,z],y,u)-c([x,u],y,z)\\&-c([y,z],x,u)+c([y,u],x,z)-c([z,u],x,y)=0.
       \end{align*}
\end{proof}

\section{A cohomological characterization of Lie 2-algebra moment maps}\label{sec:cohochar}
The main observation in \cite{FLRZ} and  \cite{LeonidTillmannComoment} is that there is a complex that allows to encode efficiently moment maps for Lie algebra actions, showing in particular that the latter form an affine subspace. In this section we obtain an analogous result for Lie 2-algebras.

Let $\h[1]\oplus \g$ be a minimal Lie 2-algebra, let $\omega$ be a 2-plectic form on a manifold $M$, and let $\g\to \mathfrak{X}(M),x\mapsto v_{x}$ be a Lie algebra morphism {taking values in Hamiltonian vector fields}.

\subsection{A cohomological characterization}\label{subsec:homochar}
Let $\nbt{({CE(L)},d_{CE(L)})}$ be the Chevalley-Eilenberg complex  of the  Lie 2-algebra  $\h[1]\oplus \g$, see \S \ref{subsec:lie2}.
Consider the double complex $$C:={CE(L)}\otimes \Omega(M),$$ where $(\Omega(M),d_{dR})$ is the de Rham complex of $M$.
We denote the resulting total complex by $(C,d_{tot})$, where \[d_{tot}:=\nrt{d_{CE(L)}}\otimes1+1\otimes d_{dR}.\]
Define
\begin{equation*} \omega_k:{\wedge}^k\g\to{\Omega}^{3-k}(M), (x_1,...,x_k)\mapsto \nbt{i(v_{x_1}\wedge...\wedge v_{x_k})}\omega \end{equation*}
and
\begin{equation*}\
\widetilde{\omega}:=\sum_{k=1}^{3}{(-1)}^{k-1}\omega_k
\end{equation*}
Note that $\widetilde{\omega}$ is a degree $3$ element of ${CE(L)}\otimes \Omega(M)=C$, \nbt{using the canonical identification $\wedge \g^*\cong S(\g[1])^*$.}
The following lemma is analog to \cite[Cor. 2.4]{FLRZ}.
\begin{lemma}
	 $\widetilde{\omega}$ is $d_{tot}$-closed.
\end{lemma}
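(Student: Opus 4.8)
The plan is to show that $d_{tot}\widetilde{\omega}=0$ by computing the two pieces $(d_{CE(L)}\otimes 1)\widetilde\omega$ and $(1\otimes d_{dR})\widetilde\omega$ separately and checking that they cancel. Since $\widetilde\omega$ is concentrated in the columns $CE(L)^k\otimes\Omega^{3-k}(M)$ for $k=1,2,3$, it suffices to verify closedness degree by degree: one must check that for each total-degree-4 bidegree component the contributions vanish. Concretely, $(1\otimes d_{dR})\omega_k = i(v_{x_1}\wedge\cdots\wedge v_{x_k})d_{dR}\omega$ plus Lie-derivative/Cartan-formula terms; using $d_{dR}\omega=0$ and the fact that each $v_{x}$ is the Hamiltonian vector field of a Hamiltonian $1$-form, Cartan's magic formula $\mathcal L_{v}=d_{dR}\iota_v+\iota_v d_{dR}$ expresses $d_{dR}\,\iota_{v_{x_k}}\cdots\iota_{v_{x_1}}\omega$ in terms of contractions by the $v_{x_i}$ and by Lie brackets $[v_{x_i},v_{x_j}]=v_{[x_i,x_j]}$ (the last equality because $\g\to\mathfrak X(M)$ is a Lie algebra morphism). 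On the other side, $(d_{CE(L)}\otimes 1)\omega_k = (d_1-d_2+d_3)\omega_k$; since $\omega_k$ only involves the $\g$-directions and is pulled back from $\wedge^k\g^*$, and since the $\g$-action and the brackets of the Lie $2$-algebra restrict on $\g$ to the ordinary Lie bracket, the $d_2$ part reproduces exactly the Chevalley–Eilenberg differential of $\g$ applied to $\omega_k$, while $d_1$ and $d_3$ contribute nothing because $\omega_k$ takes no arguments in $\h[2]$.

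First I would set up the bookkeeping: label the bidegree of $\omega_k$ as $(k,3-k)$ in $CE(L)\otimes\Omega(M)$, so $\widetilde\omega=\omega_1-\omega_2+\omega_3$ lives in total degree $3$, and $d_{tot}\widetilde\omega$ splits into components of bidegree $(k,4-k)$ for $k=1,\dots,4$. The component in bidegree $(1,3)$ comes only from $1\otimes d_{dR}$ applied to $\omega_1 = i_{v_{x_1}}\omega$: this equals $d_{dR}\iota_{v_{x_1}}\omega = \mathcal L_{v_{x_1}}\omega - \iota_{v_{x_1}}d_{dR}\omega = 0$ since $v_{x_1}$ preserves $\omega$ (being Hamiltonian) and $d_{dR}\omega=0$. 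The component in bidegree $(4,0)$ comes only from $d_{CE(L)}$ applied to $\omega_3$; evaluated on $x_1,\dots,x_4\in\g$ this is $\mp(d_\g\omega_{3})(x_1,\dots,x_4)$ where $\omega_3(x,y,z)=\omega(v_x,v_y,v_z)$, and a direct computation with Cartan's formula (exactly as in \cite[\S9]{FRZ}) shows $\omega_{3p}$ — equivalently $\omega_3$ viewed as an $\Omega^0$-valued cochain — is $d_\g$-closed, using $d_{dR}\omega=0$. For the two mixed components, bidegrees $(2,2)$ and $(3,1)$, I would write out $(d_{CE(L)}\otimes 1)(-\omega_2)+(1\otimes d_{dR})\omega_1$ and $(d_{CE(L)}\otimes 1)\omega_3+(1\otimes d_{dR})(-\omega_2)$ respectively, and match terms: the Chevalley–Eilenberg differential produces terms with a Lie bracket $[x_i,x_j]$ fed into one slot, and the de Rham differential produces, via Cartan's formula iterated, exactly the same terms with $v_{[x_i,x_j]}=[v_{x_i},v_{x_j}]$; the signs are designed to cancel, as in the model computation \cite[Cor. 2.4]{FLRZ}.

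The main obstacle I anticipate is purely combinatorial: tracking the Koszul signs coming from the identification $\wedge^k\g^*\cong S^k(\g[1])^*$, the unshuffle signs $\epsilon(\sigma,x_1,\dots,x_k)$ in the definition of $d_2$, and the $(-1)^{k-1}$ in $\widetilde\omega$, together with the signs appearing when one iterates Cartan's magic formula to compute $d_{dR}(\iota_{v_{x_k}}\cdots\iota_{v_{x_1}}\omega)$. There is no genuine mathematical difficulty once $v_x$ is Hamiltonian and $\g\to\mathfrak X(M)$ is a morphism; the content is exactly that the two ``halves'' of $d_{tot}$ produce matching families of contraction terms. I would organize the sign check by reducing to the Lie-algebra case: since $\omega_k$ only sees the $\g$-part of $L$ and the restriction of $d_{CE(L)}$ to $(\wedge\g^*)\subset CE(L)$ is the ordinary Chevalley–Eilenberg differential $d_\g$, the identity $d_{tot}\widetilde\omega=0$ is literally the statement already proved in \cite[Cor. 2.4]{FLRZ} for Lie algebras, now read inside the larger complex $CE(L)\otimes\Omega(M)$ via the $L_\infty$-morphism $L\to\g$; invoking that, the proof is essentially immediate.
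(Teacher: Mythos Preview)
Your proposal is correct, and in fact your final paragraph lands precisely on the paper's argument: since $\widetilde\omega$ lies in the subcomplex $\wedge\g^*\otimes\Omega(M)\subset CE(L)\otimes\Omega(M)$, and the inclusion $(\wedge\g^*,d_\g)\hookrightarrow (CE(L),d_{CE(L)})$ is a chain map (because $d_1=0$ by minimality, $-d_2|_{\wedge\g^*}=d_\g$, and $d_3|_{\wedge\g^*}=0$ since the trinary bracket lands in $\h$), closedness follows immediately from the Lie-algebra result \cite[Cor.~2.4]{FLRZ}. The paper gives exactly this two-line argument and nothing more; the bidegree-by-bidegree direct computation you outline first would also work but is redundant once you have the chain-map observation, so you can safely drop it.
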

\begin{proof}
It was shown in \cite{FLRZ} that $\widetilde{\omega}\in{S}^{\bullet \geq1}({\g[1]})^{\ast}\otimes\Omega(M)$ is closed with respect to the differential $D:=d_{\g}\otimes1+1\otimes d_{dR}$, where $d_{\g}$ is the Chevalley-Eilenberg differential of $\g$.

The inclusion $j:(S^{\bullet \geq1}({\g[1]})^*,d_{\g})\to(S^{\bullet \geq1}{(\h[2]\oplus\g[1])}^*,d_{CE(L)})=({CE(L)},d_{CE(L)})$ is a chain map, i.e., $j(d_{\g}(\xi))={d_{CE(L)}}(j(\xi))$ for all $\xi\in S^{k}(\g[1])^{\ast}$. This follows from:

1) $\nrt{d_{CE(L)}}=-d_{2}+d_{3}$, because the unary bracket   of $\h[1]\oplus\g$ vanishes, 

2) $-d_{2}=d_\g$ on elements of $S^{k}(\g[1])^{\ast}$, 

3) $d_{3}(\xi)=0$, {because the trinary bracket of  $\h[1]\oplus\g$ takes values in $\h[1]$}.

Thus, the inclusion $(S^{\bullet \geq1}({\g[1]})^*\otimes\Omega(M),D)\to({{CE(L)}} \otimes\Omega(M),d_{tot})$ is also a chain map, which means that $\widetilde{\omega}\in {CE(L)}\otimes\Omega(M)$ is also closed with respect to $d_{tot}$.
\end{proof}

Analogously to \cite[Prop. 2.5]{FLRZ}{\cite[Prop. 6.1]{FRZ}} we obtain:
\begin{prop}\label{prop:primitive}
There is a bijection  
\nbt{$$\{\text{moment maps for $\h[1]\oplus\g$}\}\cong \{\mu \in C^2: d_{tot}\mu=\widetilde{\omega}\}.$$
 More precisely,
for all $$\mu=\mu_1|_{\g}+\mu_1|_{\h}+\mu_2\in C^2 ={\g[1]}^*\otimes{\Omega}^1(M)\;\oplus\; ({\h[2]}^*\oplus S^2({\g}[1])^{\ast})\otimes C^{\infty}(M)$$ we have: $d_{tot}\mu=\widetilde{\omega}$
iff}
\begin{align*}
\mu_1|_{\g}:\g &\to{\Omega}_{\textrm{Ham}}^1\\
\mu_1|_{\h}:\h &\to C^{\infty}(M)\\
\mu_2:{\wedge}^2\g &\to C^{\infty}(M)
\end{align*} 
 are the components of a $\h[1]\oplus\g$ moment map.
\end{prop}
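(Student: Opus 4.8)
The plan is to unwind both sides of the equation $d_{tot}\mu = \widetilde\omega$ into their homogeneous bidegree components and match them against the defining equations of an $\h[1]\oplus\g$ moment map listed in Remark~\ref{rem:linftymor}. Concretely, write $\mu = \mu_1|_\g + \mu_1|_\h + \mu_2$ with $\mu_1|_\g \in {\g[1]}^*\otimes\Omega^1(M)$, $\mu_1|_\h \in {\h[2]}^*\otimes C^\infty(M)$, and $\mu_2 \in S^2(\g[1])^*\otimes C^\infty(M)$. Since $d_{CE(L)} = -d_2 + d_3$ (the unary bracket vanishes), applying $d_{tot} = d_{CE(L)}\otimes 1 + 1\otimes d_{dR}$ to each summand produces terms of controlled bidegree: $d_{dR}$ raises de Rham degree by one, $-d_2$ is the Lie algebra Chevalley--Eilenberg differential on the $\g$-part and also produces the pairing with the bracket $\g\otimes\h\to\h$, while $d_3$ feeds in the trinary bracket $\wedge^3\g\to\h$. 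On the right-hand side, $\widetilde\omega = \omega_1 - \omega_2 + \omega_3$ lives in bidegrees $(1,2)$, $(2,1)$, $(3,0)$ of $CE(L)\otimes\Omega(M)$ (first slot = polynomial degree in $\g[1]^*$, second = de Rham degree), all within the $\wedge\g^* \cong S(\g[1])^*$ subspace; in particular $\widetilde\omega$ has no component involving $\h[2]^*$.

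The key bookkeeping step is to list the bidegrees appearing in $d_{tot}\mu$ and equate them with those of $\widetilde\omega$. I expect the following matches. (a) In bidegree $(1,3)$: $d_{dR}(\mu_1|_\g)$ must vanish's partner — actually there is no $(1,3)$ piece in $\widetilde\omega$, so this forces nothing here; rather the relevant identity is that $d_{dR}\mu_1|_\g(x) = -\omega_1(x) = -i_{v_x}\omega$, which is the bidegree-$(1,2)$ equation and gives exactly $-i_{v_x}\omega = d_{dR}(f_1(x))$, the Hamiltonian condition, once we set $f_1 = \mu_1|_\g$ (up to the sign convention $d_{dR}\alpha = -i_{v_\alpha}\omega$). (b) In bidegree $(2,1)$: the component of $d_{tot}\mu$ is $-d_2(\mu_1|_\g) + d_{dR}(\mu_2)$ together with the part of $d_2$ on $\mu_1|_\h$ landing in $S^2\g[1]^*\otimes C^\infty$ via $\delta$ — but $\delta = 0$, so it reduces to $d_{dR}f_2(x,y) = f_1[x,y] - [f_1(x),f_1(y)]'$, matching $-\omega_2$ after identifying the $L_\infty(M,\omega)$ bracket $[f_1(x),f_1(y)]' = -i(v_x\wedge v_y)\omega = -\omega_2(x,y)$ and $d_2(\mu_1|_\g)(x,y) = \mu_1|_\g[x,y] = f_1[x,y]$. (c) In bidegree with $\h[2]^*$ and de Rham degree $0$: here $d_2(\mu_1|_\h)$ contributes the term $f_1[\delta h, x]$-analog — again $\delta = 0$, so the only surviving equation is $\mu_2(\delta h, x) = f_1[h,x]$... wait, with $\delta = 0$ this reads $0 = f_1[h,x]$? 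No: the correct reading is that the component of $d_{tot}\mu$ pairing against $\g\otimes\h$ equals the corresponding component of $\widetilde\omega$, which is zero, giving $f_2(\delta h, x) = f_1[h,x]$ with $\delta h = 0$, i.e. the equation $f_2(\delta h, x) = f_1[h,x]$ from the Remark holds automatically. (d) In bidegree $(3,0)$: $-d_2(\mu_2) + d_3(\mu_1|_\h) = -\omega_3$, which unwinds to $f_1[x,y,z] - [f_1(x),f_1(y),f_1(z)]' = f_2(x,[y,z]) - f_2(y,[x,z]) + f_2(z,[x,y])$, the last equation in the Remark, using $[f_1(x),f_1(y),f_1(z)]' = i(v_x\wedge v_y\wedge v_z)\omega = \omega_3(x,y,z)$.

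I would then assemble these observations: the correspondence $\mu_1|_\g \leftrightarrow f_1|_\g$, $\mu_1|_\h \leftrightarrow f_1|_\h$, $\mu_2 \leftrightarrow f_2$ is a linear bijection between $C^2$ and the space of triples $(f_1|_\g, f_1|_\h, f_2)$ of the right degrees, and under it the single equation $d_{tot}\mu = \widetilde\omega$ is equivalent, component by component in bidegree, to the full system in Remark~\ref{rem:linftymor} together with the Hamiltonian condition. The constraint that $\mu_1|_\g$ takes values in $\Omega^1_{\mathrm{Ham}}$ rather than all of $\Omega^1$ is precisely the content of the bidegree-$(1,2)$ equation $d_{dR}\mu_1|_\g(x) = -i_{v_x}\omega$: since the $v_x$ are assumed Hamiltonian, $i_{v_x}\omega$ is exact up to sign, so such a $\mu_1|_\g(x)$ exists and is automatically Hamiltonian with Hamiltonian vector field $v_x$. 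This matches the indexing in the proposition's displayed formula.

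The main obstacle is purely one of sign and convention tracking: the Chevalley--Eilenberg differential of a Lie 2-algebra carries its own sign conventions (the $-d_2 + d_3$ combination, the Koszul signs in extending $l_2, l_3$ to the symmetric algebra), the de Rham side contributes signs through the total-complex sign rule, and the $L_\infty(M,\omega)$ brackets carry the factor $\zeta(k) = -(-1)^{k(k+1)/2}$. Verifying that all of these conspire so that the four bidegree components of $d_{tot}\mu = \widetilde\omega$ reproduce exactly the four equations of Remark~\ref{rem:linftymor} (with the correct signs on each term, including the alternating signs in $f_2(x,[y,z]) - f_2(y,[x,z]) + f_2(z,[x,y])$ and in the definition $\widetilde\omega = \sum (-1)^{k-1}\omega_k$) is the real work. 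I would handle this by computing $d_{tot}$ on a general $\mu$ once and for all, using the explicit formulas for $d_2, d_3$ from \S\ref{subsec:lie2}, and then comparing; since the analogous Lie algebra statement is \cite[Prop.~2.5]{FLRZ}\cite[Prop.~6.1]{FRZ}, the genuinely new contributions are only the $\h[2]^*$-components and the $d_3$-term, which are comparatively short to check.
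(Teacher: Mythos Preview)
Your approach is exactly the paper's: unwind $d_{tot}\mu=\widetilde\omega$ into its homogeneous bidegree components and match them against the equations of Remark~\ref{rem:linftymor} plus the Hamiltonian condition. However, your enumeration is incomplete and one passage is muddled.

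You have omitted one bidegree entirely: the component of $d_{tot}\mu$ lying in $\h[2]^*\otimes\Omega^1(M)$ is $(1\otimes d_{dR})(\mu_1|_\h)$, and since $\widetilde\omega$ has no such component this yields the constraint $d_{dR}(\mu_1|_\h(h))=0$, which matches the first equation $d_{dR}\circ f_1 = f_1\circ\delta$ of Remark~\ref{rem:linftymor} (with $\delta=0$). There are thus \emph{five} bidegree equations, not four. Second, your item (c) reaches the wrong conclusion: the component in $\h[2]^*\otimes\g[1]^*\otimes C^\infty(M)$ gives $-(d_2\otimes 1)(\mu_1|_\h)=0$, i.e.\ $\mu_1|_\h([h,x])=0$, which is a \emph{genuine} constraint on $\mu_1|_\h$ (matching $f_2(\delta h,x)=f_1[h,x]$ with $\delta=0$), not something that ``holds automatically''. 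With these two corrections the bidegree-by-bidegree comparison goes through exactly as in the paper.
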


{Notice that by Prop. \ref{prop:primitive} the set of moment maps for   $\h[1]\oplus\g$
forms an affine space.}

\begin{figure}[h!]
\begin{center}
	\begin{tikzpicture}
	\node (a)  at (0,0) {$\g[1]^{\ast}\otimes C^{\infty}(M)$};
	\node (b)  at (0,1.5) 
	{\fbox{$\big(\h[2]^*\oplus S^{2}(\g[1])^{\ast}\big)\otimes C^{\infty}(M)$}};
	\node (c)  at (0,3) {$\big(\h[2]^*\otimes\g[1]^*\oplus S^{3}(\g[1])^{\ast}\big)\otimes C^{\infty}(M)$};
	\node (d) at (0,4.5) {...};
	
	\node (e) at (6,0) {\fbox{$\g[1]^{\ast}\otimes \Omega^{1}(M)$}};
	
	\node (f) at (6,1.5) {$\big(\h[2]^*\oplus S^{2}(\g[1])^{\ast}\big)\otimes \Omega^{1}(M)$};
	\node (g) at (6,3) {...};
	\node (h) at (10,0) {$\g[1]^{\ast}\otimes \Omega^{2}(M)$};
	\node (i) at (10,1.5) {...};
	\path[->] (a) edge node[left] {$d_{CE(L)}$} (b);
	\path[->] (b) edge node[left] {$d_{CE(L)}$} (c);
	\path[->] (c) edge node[left] {$d_{CE(L)}$} (d);
	\path[->] (a) edge node[below] {$d_{dR}$} (e);
	\path[->] (b) edge  (f);
	\path[->] (f) edge  (i);
	\path[->] (e) edge node[below] {$d_{dR}$} (h);
	\path[->] (f) edge  (g);
	\path[->] (e) edge  (f);
	\path[->] (c) edge  (g);
	\path[->] (h) edge  (i);
	
	\end{tikzpicture}
	\caption{{The double complex $C={CE(L)}\otimes \Omega(M)$, with 
highlighted  the components   of total degree $2$, i.e. $C^2$.}}
\end{center}
\end{figure}

\begin{proof}
Recall that 
a moment map $\phi=(\phi_1|_{\g},\phi_1|_{\h},\phi_2)$ for  $(\h[1]\oplus\g,[\;,\;],[\;,\;,\;])$   has to satisfy the following equalities for $x,y,z\in\g,h\in\h$, \nft{by Remark \ref{rem:linftymor}.}
\begin{align}
\label{eq:start_one}d_{dR}(\phi_1|_{\g}(x)) &=-i_{v_x}\omega\\
d_{dR}(\phi_1|_{\h}(h)) &=0\\
d_{dR}(\phi_2(x,y)) &=\phi_1|_{\g}[x,y]-[\phi_1|_{\g}(x),\phi_1|_{\g}(y)]'\\
\phi_1|_{\h}[h,x] &=0 \\
\label{eq:finish_one}\phi_1|_{\h}([x,y,z])-[\phi_1|_{\g}(x),\phi_1|_{\g}(y),\phi_1|_{\g}(z)]' &=\phi_2(x,[y,z])-\phi_2(y,[x,z])+\phi_2(z,[x,y]) , 
\end{align}
where $[\;,\;]'$ and $[\;,\;,\;]'$ are the binary and the trinary bracket of $L_{\infty}(M,\omega)$.

{Now let $\mu\in C^2$.}
Comparing the  components of $d_{tot}\mu\in C^3$ and $\widetilde{\omega}\in C^3$ {with the same bi-degree}, we see that the equation $d_{tot}\mu=\widetilde{\omega}$ is equivalent to the following five equations: \begin{align*}
(1\otimes d_{dR})(\mu_1|_{\g}) &=\omega_1 \\
(1\otimes d_{dR})(\mu_1|_{\h}) &=0 \\
-(d_2\otimes 1)(\mu_1|_{\g})+(1\otimes d_{dR})\mu_2 &=-\omega_2 \\
-(d_2\otimes 1)(\mu_1|_{\h}) &=0\\
(d_3\otimes 1)(\mu_1|_{\h})-(d_2\otimes 1)\mu_2 &=\omega_3 
\end{align*}
Rewriting these equations in terms of the respective Lie 2-algebra brackets and evaluating on $x,y,z\in \g, h\in\h$, we get the following equations:
\begin{align}
\label{eq:start_two} d_{dR}(\mu_1|_{\g}(x))&=-i_{v_x}\omega\\
d_{dR}(\mu_1|_{\h}(h))&=0\\
-\mu_1|_{\g}([x,y])+d_{dR}(\mu_{2}(x,y))&=-[\mu_1|_{\g}(x),\mu_1|_{\g}(y)]'\\
\mu_1|_{\h}([h,x])&=0\\
\label{eq:finish_two}
-\mu_1|_{\h}([x,y,z])-\mu_2([x,y],z)+\mu_2([x,z],y)-\mu_2([y,z],x)&=-[\mu_1|_{\g}(x),\mu_1|_{\g}(y),\mu_1|_{\g}(z)]'
\end{align}

Comparing equations \eqref{eq:start_one} - \eqref{eq:finish_one} and \eqref{eq:start_two} - \eqref{eq:finish_two}, {we   see 
that $(\mu_1|_{\g}, \mu_1|_{\h}, \mu_2)$ is a moment map if{f} $d_{tot}\mu=\widetilde{\omega}$.}
\end{proof}

\begin{rem}
{We proved Prop. \ref{prop:primitive} by computations. There is a conceptual approach to this result, using homotopy theory (we thank the referee for pointing this out). The conceptual approach is the same as for the special case of moment maps for $\g$, which was explained in \cite[Rem. 6.2]{FRZ}.}
\end{rem}

\section{\nbt{Existence results and a construction}}\label{section:exuniq}
 We use the characterization of moment maps for Lie 2-algebras given in \S \ref{sec:cohochar} to {obtain existence results and} construct explicit examples.
 
Again, let $\h[1]\oplus \g$ be a minimal Lie 2-algebra, let $\omega$ be a 2-plectic form on a manifold $M$, and let $\g\to \mathfrak{X}(M),
x\mapsto v_{x}$ be a Lie algebra morphism taking values in Hamiltonian vector fields.

\subsection{{Two immediate  existence results}}\label{subsec:imme}

Given a minimal Lie 2-algebra $\h[1]\oplus \g$, the projection $\h[1]\oplus \g\to \g$ is  a strict $L_{\infty}$-morphism,  hence we immediately have:
\begin{cor}\label{cor:gtohg}
\nbt{From a $\g$ moment map, by composition with the above projection, one obtains a $\h[1]\oplus \g$ moment map.}  
\end{cor}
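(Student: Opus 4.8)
The plan is to verify directly that precomposing an $L_\infty$-morphism with a strict $L_\infty$-morphism yields an $L_\infty$-morphism, and then check the moment-map normalization condition survives. Concretely, let $(f_1,f_2)\colon\g\to L_\infty(M,\omega)$ be a $\g$ moment map, and let $\pi\colon\h[1]\oplus\g\to\g$ denote the projection, which is strict in the sense that $\pi$ has a single nonzero component $\pi_1$ (the projection in degree $0$, zero on $\h[1]$) intertwining the brackets: $\pi_1\circ\delta=0$ (vacuous here since $\delta=0$ by minimality), $\pi_1[\,,\,]=[\pi_1\,\cdot\,,\pi_1\,\cdot\,]$, and $\pi_1[\,,\,,\,]=0$ because the trinary bracket lands in $\h[1]$. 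The composite $\phi:=f\circ\pi$ then has components $\phi_1|_\g:=f_1\circ\pi_1=f_1$, $\phi_1|_\h:=0$, and $\phi_2:=f_2\circ(\pi_1\wedge\pi_1)=f_2$.

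The cleanest way to carry this out is to invoke Prop. \ref{prop:primitive}, which translates ``moment map for $\h[1]\oplus\g$'' into ``$\mu\in C^2$ with $d_{tot}\mu=\widetilde\omega$.'' First I would recall that a $\g$ moment map corresponds, via \cite[Prop. 6.1]{FRZ} (the Lie algebra analogue recalled in the proof of Prop. \ref{prop:primitive}), to a primitive of $\widetilde\omega$ in the smaller complex $S^{\bullet\ge1}(\g[1])^*\otimes\Omega(M)$ with differential $D$. Then I would observe that the chain map $j\colon(S^{\bullet\ge1}(\g[1])^*\otimes\Omega(M),D)\to(CE(L)\otimes\Omega(M),d_{tot})$ constructed in the proof of the lemma preceding Prop. \ref{prop:primitive} sends this primitive to a $d_{tot}$-primitive of $\widetilde\omega$ in $C$, because $j$ is a chain map and $j(\widetilde\omega)=\widetilde\omega$. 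Unwinding $j$ shows its image consists of elements with vanishing $\h[2]^*$-component, which matches $\phi_1|_\h=0$, and that $\phi_1|_\g=f_1$, $\phi_2=f_2$. Finally, the normalization $-i_{v_x}\omega=d_{dR}(\phi_1|_\g(x))=d_{dR}(f_1(x))$ holds because $f$ is a $\g$ moment map.

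Alternatively, and more elementarily, one can simply substitute $\phi_1|_\g=f_1$, $\phi_1|_\h=0$, $\phi_2=f_2$ into equations \eqref{eq:start_one}--\eqref{eq:finish_one} and check each one using the corresponding equation in Remark \ref{Rem:momapeq} for the $\g$ moment map $f$: the first is the normalization, the second and fourth are trivial since $\phi_1|_\h=0$, the third is the $k=2$ equation for $f$, and the fifth reduces (since $\phi_1|_\h=0$ and $[x,y,z]\in\h[1]$ gets killed by $\pi_1$, so $f_1\circ\pi_1[x,y,z]=0$) exactly to the $k=3$ equation for $f$.

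I do not expect any real obstacle here: the statement is essentially the functoriality of $L_\infty$-morphisms under composition, specialized to a strict morphism, plus the trivial observation that the defining property of a moment map only constrains the degree-$0$ component $f_1|_\g$, which is unchanged. The only point requiring a line of care is that the trinary bracket of $\h[1]\oplus\g$ takes values in $\h[1]$, so its image under $\pi_1$ vanishes, which is what makes the last of equations \eqref{eq:finish_one} for $\phi$ collapse onto the last of the equations in Remark \ref{Rem:momapeq} for $f$ rather than producing an extra term; this is exactly the fact already used (point 3) in the proof of the lemma that $\widetilde\omega$ is $d_{tot}$-closed.
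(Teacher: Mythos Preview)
Your proposal is correct and matches the paper's approach: the paper simply observes (in the sentence preceding the corollary) that the projection $\h[1]\oplus\g\to\g$ is a strict $L_\infty$-morphism and declares the result immediate, without writing out any of the verifications you supply. Your direct check of equations \eqref{eq:start_one}--\eqref{eq:finish_one} is exactly what that one-line argument unpacks to, and your cohomological alternative via Prop.~\ref{prop:primitive} and the chain map $j$ is a valid but more roundabout route that the paper does not take here.
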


\begin{rem}\label{rem:DGLA}
{The inclusion $\g \to \h[1]\oplus \g$ however is a not a strict $L_{\infty}$-morphism in general: it is if{f} $\h[1]\oplus \g$ is a DGLA. In particular, moment maps for a DGLA $\h[1]\oplus \g$ exist if{f} $\g$ moment maps exist.}
\end{rem}

Now fix $p\in M$. {As recalled in Prop. \ref{prop:9596}, the existence of a $\g$ moment map is equivalent to $[\omega_{3p}]_{\g}=0$, 
 under the mild assumption that   $H^1(M)=0$. {Recall that $\omega_{3p}$ is the Lie algebra 3-cocycle defined at the end of \S\ref{subsec:liealgaction}, i.e.   for all $x,y,z\in \g$:
\begin{equation}
\label{eq:omega3p}
\omega_{3p}(x,y,z)=\omega(v_{x},v_{y},v_{z})|_p.  
\end{equation} 
  } 

When $[\omega_{3p}]_{\g}\neq 0$ there exists no moment map for $\g$, but there always exists a moment map for a slightly larger Lie 2-algebra. Indeed, denote by $$\RR[1]\oplus_{-\omega_{3p}} \g$$
 the Lie 2-algebra underlying $\RR[1]\oplus \g$ with brackets $[x,y]:={[x,y]}_{\g}$ for $x,y\in\g$, trinary bracket equal to $-\omega_{3p}$,  and all other brackets being trivial.
We paraphrase \nbt{a special case of} \cite[Prop. 9.10]{FRZ}:
\begin{prop} \label{prop:chrisrg}
{If $H^1(M)=0$, then there exists a moment map for $\RR[1]\oplus_{-\omega_{3p}} \g$.}
\end{prop}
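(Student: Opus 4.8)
The plan is to use the cohomological characterization from Prop.~\ref{prop:primitive}: a moment map for $\RR[1]\oplus_{-\omega_{3p}}\g$ is the same as a $d_{tot}$-primitive $\mu\in C^2$ of $\widetilde\omega$, where now $C = CE(\RR[1]\oplus_{-\omega_{3p}}\g)\otimes\Omega(M)$. I would build such a primitive by hand, component by component, following the five equations \eqref{eq:start_two}--\eqref{eq:finish_two} that appear in the proof of Prop.~\ref{prop:primitive}.

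First I would handle $\mu_1|_{\g}\colon\g\to\Omega^1_{\textrm{Ham}}(M)$. The equation $d_{dR}(\mu_1|_{\g}(x)) = -i_{v_x}\omega$ only says that $\mu_1|_{\g}(x)$ is a Hamiltonian primitive for $v_x$; such a primitive exists since each $v_x$ is Hamiltonian by hypothesis, and the point is to choose these primitives consistently. Here is where $H^1(M)=0$ enters: I would argue that one can choose a \emph{linear} map $x\mapsto\mu_1|_{\g}(x)$ (e.g.\ by picking a basis of $\g$ and extending linearly, after fixing an arbitrary Hamiltonian primitive for each basis element). Next, for the third equation I need $\mu_2\colon\wedge^2\g\to C^\infty(M)$ with $d_{dR}\mu_2(x,y) = \mu_1|_{\g}[x,y] - [\mu_1|_{\g}(x),\mu_1|_{\g}(y)]'$; the right-hand side is $d_{dR}$-closed (a standard check: its differential is $-i_{v_{[x,y]}}\omega - (-i_{v_{[x,y]}}\omega)=0$, using $[v_x,v_y]=v_{[x,y]}$), so again $H^1(M)=0$ lets me choose $\mu_2$, and with a little care (basis argument) I can take it skew-symmetric and bilinear.

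The remaining equations involve $\mu_1|_{\h}\colon\RR\to C^\infty(M)$. The fourth equation $\mu_1|_{\h}([h,x])=0$ is automatic because the binary bracket $\g\otimes\h\to\h$ is trivial in $\RR[1]\oplus_{-\omega_{3p}}\g$. The crucial fifth equation reads, after inserting that the trinary bracket is $-\omega_{3p}$ (valued in $\RR$),
\begin{equation*}
\mu_1|_{\h}\big(\omega_{3p}(x,y,z)\big) = \mu_2([x,y],z)-\mu_2([x,z],y)+\mu_2([y,z],x) + [\mu_1|_{\g}(x),\mu_1|_{\g}(y),\mu_1|_{\g}(z)]',
\end{equation*}
where the last term is $\zeta(3)\,i(v_x\wedge v_y\wedge v_z)\omega = \omega(v_x,v_y,v_z)$ as a function on $M$ (here $\zeta(3)=-(-1)^{6}=-1$... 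I would double-check this sign). Evaluating at the fixed point $p$, the left-hand side is $\mu_1|_{\h}(1)\cdot\omega_{3p}(x,y,z)$ — so, setting $\mu_1|_{\h}(1):=1\in C^\infty(M)$ the constant function, I get $\omega_{3p}(x,y,z)$ on the left, and the identity I must verify is exactly that the function
\begin{equation*}
F(x,y,z) := \mu_2([x,y],z)-\mu_2([x,z],y)+\mu_2([y,z],x) + \omega(v_x,v_y,v_z) - \omega_{3p}(x,y,z)
\end{equation*}
vanishes identically on $M$. I expect this to follow by showing $F(x,y,z)$ is a closed, hence constant, function on $M$ (using the third equation for $\mu_2$ and the Leibniz-type identities for contractions), and that its value at $p$ is $0$ by the definition of $\omega_{3p}$.

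\textbf{Main obstacle.} The delicate point is the last step: verifying that $F(x,y,z)$ is genuinely \emph{constant} on $M$, i.e.\ that $d_{dR}F(x,y,z)=0$. This is a somewhat lengthy computation juxtaposing the defining relation $d_{dR}\mu_2(x,y)=\mu_1|_{\g}[x,y]-[\mu_1|_{\g}(x),\mu_1|_{\g}(y)]'$ against the Cartan-calculus identity $d_{dR}\,\omega(v_x,v_y,v_z) = -\sum_{\text{cyc}}\pm\,i(v_{[x_i,x_j]}\wedge v_{x_k})\omega$ coming from $d_{dR}\omega=0$ and $\cL_{v_x}\omega=0$; the signs and the combinatorics of the three cyclic terms must match precisely. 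Once closedness is established, connectedness of $M$ gives constancy, and evaluation at $p$ pins the constant to $0$. An alternative, and perhaps cleaner, route is to invoke Prop.~\ref{prop:9596} directly: by construction the 3-cocycle of the $\g$-action on $(M,\omega)$ whose class obstructs a $\g$-moment map for the Lie \emph{2}-algebra $\RR[1]\oplus_{-\omega_{3p}}\g$ is $\omega_{3p} - (-\omega_{3p})$... — more precisely, one shows that relative to this enlarged domain the relevant obstruction class in $CE(\RR[1]\oplus_{-\omega_{3p}}\g)$ is killed precisely by the trinary bracket $-\omega_{3p}$, so that (together with $H^1(M)=0$) the general machinery of Prop.~\ref{prop:primitive} produces the primitive. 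I would present the hands-on construction as the proof and remark on this conceptual reformulation.
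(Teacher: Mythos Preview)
Your approach is essentially the one the paper records (citing \cite[Prop.~9.10]{FRZ}): choose linear Hamiltonian primitives $\gamma_1=\mu_1|_{\g}$, set $\mu_1|_{\h}(r)=r$, choose $\mu_2=\gamma_2$ as a primitive of the closed $1$-form $\gamma_1([x,y])-\iota(v_x\wedge v_y)\omega$, and then verify the last relation. The Cartan-calculus identity you anticipate for $d_{dR}F$ is exactly eq.~\eqref{eq:9.2} (which is \cite[Lemma~9.2]{FRZ}), so closedness of $F$ is routine.

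There is, however, one concrete omission that would make your argument fail as written: your claim that $F(x,y,z)|_p=0$ ``by the definition of $\omega_{3p}$'' is not justified. At $p$ the last two terms of $F$ cancel, but what remains is $\mu_2([x,y],z)|_p-\mu_2([x,z],y)|_p+\mu_2([y,z],x)|_p$, which has no reason to vanish for an \emph{arbitrary} choice of primitive $\mu_2$. The paper fixes this by normalising: among all primitives of the closed $1$-form, take the unique one with $\gamma_2(x,y)|_p=0$ (possible since $M$ is connected). With that normalisation your evaluation at $p$ goes through, and the rest of your argument is correct. You should also resolve the sign you flagged: indeed $\zeta(3)=-1$, so $[\,\cdot\,,\cdot\,,\cdot\,]'=-\omega(v_x,v_y,v_z)$; tracking this through eq.~\eqref{eq:finish_two} gives exactly your $F$.
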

 {The construction of \cite[Prop. 9.10]{FRZ} is as follows: let $\gamma_1\colon \g\to {\Omega}_{\textrm{Ham}}^{1}(M)$ be a linear map such that $\gamma_1(x)$ is a Hamiltonian 1-form for the Hamiltonian vector field $v_x$, for all $x$.}
{Then a} moment map $\gamma=(\gamma_1, \gamma_2)$ is given by:
\begin{align} 
\label{eq:gamma1}
\gamma_1&\colon\g\to{\Omega}_{\textrm{Ham}}^{1}(M)
\\
\gamma_1&\colon\RR\to C^{\infty}(M),\hspace{0.2cm}r\mapsto r,\hspace{0.2cm} \nonumber\\
\label{eq:gamma2} {\gamma}_2&\colon {\wedge}^2\g\to C^{\infty}(M)
\end{align}
{where, for all $x,y \in \g$, $\gamma_2(x,y)$} is the unique solution of
the equation
\begin{equation*}
 \gamma_1([x_1,x_2])-i({v_{x_1}\wedge v_{x_2})}\omega=d_{dR}({{\gamma}_2}(x_1,x_2))
\end{equation*}
with $\gamma_2(x,y)_p=0$.

\nbt{\emph{From now on we fix a linear map $\gamma_1\colon \g\to {\Omega}_{\textrm{Ham}}^{1}(M)$ as above, and consequently 
a moment map $\gamma$ for $\RR[1]\oplus_{-\omega_{3p}} \g$.}}

\subsection{\nbt{A constructive existence result in terms of $H^3({CE(L)})$.}}
\label{subsec:obstr}
\nbt{By Prop. \ref{prop:primitive}, moment maps for $\h[1]\oplus \g$  exist if{f} the cohomology class of $\widetilde{\omega}\in C^3$ vanishes.}
In this subsection, which is inspired by \cite[\S 5]{FLRZ}, {we obtain existence results for $\h[1]\oplus \g$ moment maps in terms of the cohomology of the {Chevalley-Eilenberg} complex ${CE(L)}$ of this  Lie 2-algebra}. \nbt{Notice that the latter is smaller  than $C$, and thus more manageable.} {Further, we give an explicit construction of $\h[1]\oplus \g$ moment maps.}

Fix a point $p \in M$. The map 
\begin{align*}r:(C,d_{tot})&\to({CE(L)},{d_{CE(L)}})\\ 
\eta\otimes\alpha&\mapsto\eta\cdot\alpha_p,
\end{align*}
is a chain map, where $\alpha_p\in\RR$ is declared to vanish if $\alpha\in{\Omega}^{\geq1}(M)$. Since $\widetilde{\omega}$ is $d_{tot}$-closed by Lemma 3.1, its image $r(\widetilde{\omega})=\omega_{3p}\in {CE(L)}^3$ is ${d_{CE(L)}}$-closed, hence it defines a class $[\omega_{3p}]_{CE(L)}$ in $H^3({CE(L)})$, the Chevalley-Eilenberg cohomology of $\h[1]\oplus\g$. 

\begin{prop} \label{cor:exact}
If a $\h[1]\oplus\g$ moment map   exists, then $[\omega_{3p}]_{CE(L)}=0$.
\end{prop}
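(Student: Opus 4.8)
The plan is to exploit the chain map $r\colon (C,d_{tot})\to (CE(L),d_{CE(L)})$ introduced just above the statement, together with the cohomological characterization of moment maps from Proposition \ref{prop:primitive}. Concretely, suppose $\phi$ is a $\h[1]\oplus\g$ moment map. By Proposition \ref{prop:primitive} it corresponds to an element $\mu\in C^2$ with $d_{tot}\mu=\widetilde{\omega}$, i.e. $\widetilde{\omega}$ is $d_{tot}$-exact in $C$. Applying the chain map $r$ gives $d_{CE(L)}(r(\mu))=r(d_{tot}\mu)=r(\widetilde{\omega})=\omega_{3p}$, so $\omega_{3p}$ is $d_{CE(L)}$-exact in $CE(L)$, which is exactly the assertion $[\omega_{3p}]_{CE(L)}=0$.

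The only steps that need a little care are: (i) confirming that $r$ is indeed a chain map — but this is stated in the text immediately preceding the proposition, so I may simply cite it; and (ii) verifying that $r(\widetilde{\omega})=\omega_{3p}$. For (ii) I would note that $\widetilde{\omega}=\sum_{k=1}^3(-1)^{k-1}\omega_k$ with $\omega_k\colon\wedge^k\g\to\Omega^{3-k}(M)$; under $r$ the terms $\omega_1$ and $\omega_2$ (which take values in $\Omega^{2}(M)$ and $\Omega^{1}(M)$ respectively) are killed, since $r$ discards the component of positive de Rham degree, and only $\omega_3$ survives, with $\omega_3(x,y,z)=\omega(v_x,v_y,v_z)\in\Omega^0(M)=C^\infty(M)$, whose value at $p$ is precisely $\omega_{3p}(x,y,z)$ by \eqref{eq:omega3p}. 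This is already recorded in the paragraph defining $r$, so in the actual proof it can be invoked rather than recomputed.

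There is essentially no hard part here: the proposition is an immediate corollary of Proposition \ref{prop:primitive} and the functoriality of cohomology under the chain map $r$. The only thing worth being explicit about is that "$[\omega_{3p}]_{CE(L)}$ is well-defined" was already established before the statement (its $d_{CE(L)}$-closedness follows from $d_{tot}$-closedness of $\widetilde{\omega}$), so the content of the proof is purely the exactness transfer: existence of a moment map $\Rightarrow$ $\widetilde{\omega}$ exact in $C$ $\Rightarrow$ (push forward along $r$) $\omega_{3p}$ exact in $CE(L)$.

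\begin{proof}
Suppose a $\h[1]\oplus\g$ moment map exists. By Prop. \ref{prop:primitive} there is $\mu\in C^2$ with $d_{tot}\mu=\widetilde{\omega}$. Applying the chain map $r$ and using $r(\widetilde{\omega})=\omega_{3p}$ (as observed above), we get
\[
\omega_{3p}=r(\widetilde{\omega})=r(d_{tot}\mu)=d_{CE(L)}(r(\mu)),
\]
so $\omega_{3p}$ is $d_{CE(L)}$-exact, i.e. $[\omega_{3p}]_{CE(L)}=0$.
\end{proof}
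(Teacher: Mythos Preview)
Your proof is correct and follows essentially the same approach as the paper: both use Prop.~\ref{prop:primitive} to conclude that $\widetilde{\omega}$ is $d_{tot}$-exact, then push this along the chain map $r$ to deduce that $\omega_{3p}=r(\widetilde{\omega})$ is $d_{CE(L)}$-exact. The only cosmetic difference is that the paper phrases the last step in terms of the induced map $[r]$ on cohomology, while you work directly at the cochain level.
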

\begin{proof}
By Proposition 3.2, if a moment map exists, then $\nbt{[\widetilde{\omega}]_C=0\in H^3(C)}$. Hence we have  $[\omega_{3p}]_{CE(L)}=[r]([\widetilde{\omega}]_C)=0$, where $[r]:H^{3}(C)\to H^{3}({CE(L)})$ is the induced map in cohomology.
\end{proof}

Conversely we are now going to show that, if $[\omega_{3p}]_{CE(L)}=0$ and certain cohomological assumptions on $M$ are satisfied, there exists a moment map \nbt{for $\h[1]\oplus\g$. \nbt{Our approach is constructive.}
}
\begin{rem} \nbt{An alternative approach, which however requires stronger assumptions and is  not constructive, is the following.}
Assume that $H^1(M)=H^2(M)=0$. By the K\"unneth theorem \nft{(see for instance \cite[Thm 3B.5]{Hatcher})},
\nbt{$H^3(C)\cong H^3({CE(L)})\otimes H^0(M)\cong H^3({CE(L)})$,}
and the map  $[r]:H^3(C)\to H^3({CE(L)})$ induced in cohomology is an isomorphism. 
Thus, if $[\omega_{3p}]_{CE(L)}=0$, \nbt{then $[\widetilde{\omega}]_C=0$,}
and by Prop. \ref{prop:primitive} there exists a moment map.
\end{rem}

\nbt{In the following, given an element  ${\eta}$ of
$${CE(L)}^2={\h[2]}^\ast \oplus S^2({\g[1]})^\ast,$$ 
we denote the first and second component  of $\eta$ respectively by ${\eta}|_{\h[2]}$ and ${\eta}|_{S^2({\g[1]})}$.}

\begin{lemma} \label{lemma:star}
An element ${\eta} \in {CE(L)}^2$
satisfies ${d_{CE(L)}}{\eta}=\omega_{3p}$ iff 
\begin{equation}\label{eq:star}
\begin{cases}
d_{\g}({\eta}|_{S^2{\g[1]}})+d_3({\eta}|_{\h[2]})=\omega_{3p} \\ 
d_2({\eta}|_{{\h[2]}})=0
\end{cases}
\end{equation} 
\end{lemma}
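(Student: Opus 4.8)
The plan is to unwind the definition of $d_{CE(L)}=-d_2+d_3$ (the unary bracket vanishes, so $d_1=0$) acting on an element $\eta\in CE(L)^2=\h[2]^*\oplus S^2(\g[1])^*$, and then compare the result with $\omega_{3p}\in CE(L)^3$ bidegree by bidegree. Recall that $CE(L)^3$ decomposes as $\big(\h[2]^*\otimes\g[1]^*\big)\oplus S^3(\g[1])^*$, corresponding respectively to the ``$\h$-weight'' part and the ``pure $\g$'' part. Since $\omega_{3p}\in S^3(\g[1])^*$ sits entirely in the pure $\g$ component (it is pulled back along the projection $L\to\g$, or equivalently it lies in the image of the chain map $j$ from the proof of Lemma 3.1), the equation $d_{CE(L)}\eta=\omega_{3p}$ splits into exactly two equations, one in each component of $CE(L)^3$.

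First I would examine how the two summands of $\eta$ map into the two summands of $CE(L)^3$. The operator $d_3$ is dual to $l_3:S^3(\g[1])\to\h[2]$, so $d_3$ raises the number of $\g[1]^*$-factors by two while decreasing the $\h[2]^*$-count by one; hence $d_3(\eta|_{\h[2]})\in S^3(\g[1])^*$, landing in the pure $\g$ part. The operator $d_2$ is dual to $l_2$, which on $S^2(\g[1])$ is the Lie bracket (contributing to the pure $\g$ part) and on $\g[1]\otimes\h[2]$ is the representation (contributing to the $\h[2]^*\otimes\g[1]^*$ part). Therefore $d_2(\eta|_{S^2\g[1]})\in S^3(\g[1])^*$ and $d_2(\eta|_{\h[2]})\in \h[2]^*\otimes\g[1]^*$. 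Reading off the component in $S^3(\g[1])^*$ of $d_{CE(L)}\eta=\omega_{3p}$ gives $-d_2(\eta|_{S^2\g[1]})+d_3(\eta|_{\h[2]})=\omega_{3p}$, and since $-d_2=d_\g$ on $S^\bullet(\g[1])^*$ (fact 2) in the proof of Lemma 3.1), this is the first line of \eqref{eq:star}. Reading off the component in $\h[2]^*\otimes\g[1]^*$ gives $-d_2(\eta|_{\h[2]})=0$, i.e. the second line of \eqref{eq:star}. The converse is immediate: if both equations of \eqref{eq:star} hold, summing them recovers $d_{CE(L)}\eta=\omega_{3p}$ since there are no other components.

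The only genuinely careful point — and the step I expect to need the most attention — is making sure the bidegree bookkeeping is airtight: that $d_3(\eta|_{S^2\g[1]})=0$ (since $\eta|_{S^2\g[1]}$ has no $\h[2]^*$-factor, and $l_3$ lands in $\h[2]$, dualizing to something with an $\h[2]^*$-factor, but there is none to produce — more precisely $d_3$ on $S^2(\g[1])^*$ vanishes for degree reasons), and that $d_2$ and $d_3$ send the pieces precisely to the claimed summands and nowhere else. This is exactly the same mechanism already used to justify points 1)-3) in the proof of Lemma 3.1, so I would cite that reasoning rather than repeat it. Once the bidegree decomposition is in place, the equivalence is a formal consequence, and I would also note explicitly that $-d_2 = d_\g$ identifies the first equation with the stated form involving $d_\g(\eta|_{S^2\g[1]})$.
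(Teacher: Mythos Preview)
Your proposal is correct and follows essentially the same approach as the paper: the paper's proof consists of the single sentence ``The claim follows easily by computing $d_{CE(L)}\eta$ and comparing the respective components of $d_{CE(L)}\eta$ and $\omega_{3p}$ in $CE(L)$,'' and your argument simply makes this bidegree comparison explicit. Your careful tracking of where $d_2$ and $d_3$ send each summand (including the vanishing of $d_3(\eta|_{S^2\g[1]})$) is exactly the content the paper leaves to the reader.
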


\begin{proof}
The claim follows easily by computing ${d_{CE(L)}}\eta$ and comparing the respective components of ${d_{CE(L)}}\eta$ and $\omega_{3p}$ in ${CE(L)}$.
\end{proof}

\begin{rem}\label{rem:gh}
{We will consider this system in detail in \S \ref{subsec:h3g}.
For the moment we only remark that the second equation is equivalent  to ${\eta}|_{{\h[2]}}$ lying in the subspace $[\g,\h]^{\circ}:=\{\xi\in\h^{*}:\xi(v)=0, \hspace{0.1cm}\forall v\in[\g,\h]\}$.}
\end{rem}
 
\begin{lemma}\label{lem:solstarlimor}
There is a bijection  between
\nbt{ $$\{\eta \in {CE(L)}^2: \nrt{d_{CE(L)}}\eta=\omega_{3p}\}$$
 and
$$\{\text{$L_{\infty}$-morphisms $f\colon\h[1]\oplus \g \to {\RR[1]\oplus_{-\omega_{3p}} \g}$ with $f_1|_{\g}=Id_{\g}$}\}.$$ }
{The bijection maps $\eta=\eta|_{\h[2]}+\eta|_{S^2\g[1]}$ to 
the $L_{\infty}$-morphisms $f$ with   components $f_1=(\eta|_{\h[2]},Id_{\g})$ and $f_2=\eta|_{S^2\g[1]}$.}

\end{lemma}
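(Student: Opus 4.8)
The plan is to unwind the condition that $f=(f_1,f_2)$ be an $L_\infty$-morphism $\h[1]\oplus\g\to\RR[1]\oplus_{-\omega_{3p}}\g$ between these two minimal Lie $2$-algebras, to note that all but two of the defining relations hold automatically, and then to identify the two surviving relations with the system \eqref{eq:star} of Lemma \ref{lemma:star}. First I would fix the component structure of $f$. The target $\RR[1]\oplus_{-\omega_{3p}}\g$ is, just like $L_\infty(M,\omega)$, concentrated in degrees $0$ and $-1$ (with degree-$0$ part $\g$ and degree-$(-1)$ part $\RR$), so the same degree count that underlies Remark \ref{rem:linftymor} shows that $f$ consists of linear maps $f_1|_\g\colon\g\to\g$, $f_1|_\h\colon\h\to\RR$ and $f_2\colon\wedge^2\g\to\RR$, with all higher components $f_{k\ge 3}$, as well as $f_2$ on $\g\otimes\h$ and on $\wedge^2\h$, forced to vanish for degree reasons. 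We then impose $f_1|_\g=Id_\g$.

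Next I would write down the $L_\infty$-morphism relations. They have the same shape as the equations in Remark \ref{rem:linftymor}, but with the brackets $(d_{dR},[\,,\,]',[\,,\,,\,]')$ of $L_\infty(M,\omega)$ replaced by those of $\RR[1]\oplus_{-\omega_{3p}}\g$: the target unary bracket vanishes, the target binary bracket on $\g\otimes\RR$ vanishes (the $\g$-representation on $\RR$ is trivial, by Lemma \ref{lem:minimal}), the target binary bracket on $\wedge^2\g$ is $[\,,\,]_\g$, and the target trinary bracket on $\wedge^3\g$ is $-\omega_{3p}$. Using $f_1|_\g=Id_\g$ one then checks: the chain-map relation reads $0=0$; the relation coming from two $\g$-inputs reduces to $[x,y]_\g=[x,y]_\g$ (here $f_2$ does not enter, since the target unary bracket is now zero); the relation coming from one $\g$-input $x$ and one $\h$-input $h$ reduces to $f_1|_\h([x,h])=0$; and the relation coming from three $\g$-inputs becomes, after using the skew-symmetry of $f_2$, $d_\g f_2=\omega_{3p}+f_1|_\h\circ c$, where $c$ denotes the trinary bracket of $\h[1]\oplus\g$. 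All the remaining relations — those evaluated on monomials containing an $\h$-input in the $n=3$ equation, or two or more $\h$-inputs — take values in a degree of the target that is $0$, so they hold trivially.

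Finally I would translate the two surviving conditions using the identifications $\eta|_{\h[2]}=f_1|_\h$ (as a linear map $\h\to\RR$) and $\eta|_{S^2(\g[1])}=f_2$ (via $S^2(\g[1])^\ast\cong\wedge^2\g^\ast$). From the definitions of $d_2$ and $d_3$ in \S\ref{subsec:lie2} one has $d_2(\eta|_{\h[2]})(x,h)=\eta|_{\h[2]}([x,h])$ and $d_3(\eta|_{\h[2]})=-f_1|_\h\circ c$; hence ``$f_1|_\h([x,h])=0$ for all $x\in\g,h\in\h$'' is exactly $d_2(\eta|_{\h[2]})=0$ (see also Remark \ref{rem:gh}), and ``$d_\g f_2=\omega_{3p}+f_1|_\h\circ c$'' is exactly $d_\g(\eta|_{S^2(\g[1])})+d_3(\eta|_{\h[2]})=\omega_{3p}$. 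By Lemma \ref{lemma:star} the conjunction of these two conditions is equivalent to $d_{CE(L)}\eta=\omega_{3p}$. The assignment $\eta\mapsto(f_1|_\h,f_2)=(\eta|_{\h[2]},\eta|_{S^2(\g[1])})$ and the inverse assignment $(f_1|_\h,f_2)\mapsto\eta:=f_1|_\h+f_2\in CE(L)^2$ therefore carry the two sets in the statement onto each other and are mutually inverse, which gives the claimed bijection.

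The content is essentially bookkeeping, and I expect no conceptual obstacle. The two points that need care are (i) the enumeration of the $L_\infty$-morphism relations and the verification that all but two are vacuous — this is a clean degree count, but one that must be carried out carefully across the shifted spaces $\h[2]\oplus\g[1]$ and $\RR[2]\oplus\g[1]$ — and (ii) the tracking of the Koszul signs, so that the two surviving relations match $d_\g$, $d_2$ and $d_3$ precisely under the sign conventions $d_{CE(L)}=d_1-d_2+d_3$ and $l_3=-c$ fixed in \S\ref{subsec:lie2}.
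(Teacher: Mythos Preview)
Your proposal is correct and follows essentially the same route as the paper's proof: write out the $L_{\infty}$-morphism conditions (as in Remark \ref{rem:linftymor}, adapted to the target $\RR[1]\oplus_{-\omega_{3p}}\g$), observe that under $f_1|_\g=Id_\g$ only the two relations $f_1|_\h([x,h])=0$ and the three-input relation survive, rewrite these as $d_2(\eta|_{\h[2]})=0$ and $d_\g(\eta|_{S^2(\g[1])})+d_3(\eta|_{\h[2]})=\omega_{3p}$, and invoke Lemma \ref{lemma:star}. Your treatment is slightly more explicit than the paper's in enumerating which relations vanish for degree reasons, but the argument is the same.
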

\begin{proof}
We will denote the \nbt{trinary} bracket of ${\RR[1]\oplus_{-\omega_{3p}} \g}$ by \nbt{$[\;,\;,\;]^{\diamond}$}.
For $f$ to be an $L_{\infty}$ morphism $f:\h[1]\oplus\g\to {\RR[1]\oplus_{-\omega_{3p}} \g}$, its  components 
\begin{align*}
f_1|_{\g}:\g&\to\g\\
f_1|_{\h}:\h&\to\RR \\
f_2:{\wedge}^2\g&\to\RR
\end{align*}
 must satisfy the following relations for $x,y,z\in\g,h\in\h$: 
\begin{align}
\label{firstsys} & f_1|_{\h}[h,x]=0,\hspace{0.2cm}\\
& f_1|_{\g}[x,y]-[f_1|_{\g}(x),f_1|_{\g}(y)]_{\g}=0\nonumber\\
 \label{secondsys}& f_1|_{\h}([x,y,z])-[f_1|_{\g}(x),f_1|_{\g}(y),f_1|_{\g}(z)]^{\diamond}=f_2(x,[y,z])-f_2(y,[x,z])+f_2(z,[x,y]). 
\end{align}
These equalities follow from \cite[Def. 5.3]{HDirac}.

\nft{Clearly} \eqref{firstsys} is equivalent to $d_2(f_1|_{\h})=0$. {When $f_1|_{\g}=Id_{\g}$,} rewriting \eqref{secondsys} using the definition of the bracket $[\;,\;,\;]^{\diamond}$, we get: \vspace{-0.2cm}\[\nbt{f_1|_{\h}}([x,y,z])-f_2(x,[y,z])+f_2(y,[x,z])-f_2(z,[x,y])=-\omega_{3p}(x,y,z)\]
The latter equality can be written as
$
d_{\g}(f_2)+d_3(f_1|_{\h}) = \omega_{3p}.
$
{Applying Lemma \ref{lemma:star} concludes the proof}.
\end{proof}

By composition, we immediately obtain the following result, which also provides {an explicit construction for $\h[1]\oplus \g$ moment maps.}
\begin{thm}\label{cor:explicit} Assume $H^1(M)=0$.
 Let $\eta \in {CE(L)}^2$ satisfy $d\eta=\omega_{3p}$. Then $$\nbt{\phi^{\eta}}:=\gamma\circ f$$ is a moment map for $\h[1]\oplus \g$, where $f$
 is \nft{constructed out} of $\eta$ as in Lemma \ref{lem:solstarlimor},
and $\gamma$ is   given {just below Prop. \ref{prop:chrisrg}.}
\end{thm}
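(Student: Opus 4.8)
The plan is to obtain $\phi^{\eta}$ simply by composing two $L_\infty$-morphisms that have already been produced, and then to check that the composite satisfies the defining condition of a $\h[1]\oplus\g$ moment map. First I would observe that, by Lemma \ref{lem:solstarlimor}, the datum $\eta\in {CE(L)}^2$ with $d_{CE(L)}\eta=\omega_{3p}$ corresponds to an $L_\infty$-morphism
\[
f\colon \h[1]\oplus\g \longrightarrow \RR[1]\oplus_{-\omega_{3p}}\g
\]
with $f_1|_\g = \mathrm{Id}_\g$, $f_1|_{\h}=\eta|_{\h[2]}$, and $f_2=\eta|_{S^2\g[1]}$. On the other side, since $H^1(M)=0$, Prop. \ref{prop:chrisrg} furnishes the moment map $\gamma=(\gamma_1,\gamma_2)$ for $\RR[1]\oplus_{-\omega_{3p}}\g$, which (by Def. \ref{def:gmomap}, applied to the Lie $2$-algebra $\RR[1]\oplus_{-\omega_{3p}}\g$ in the sense of \S\ref{sec:momap2}) is nothing but an $L_\infty$-morphism $\gamma\colon \RR[1]\oplus_{-\omega_{3p}}\g \to L_\infty(M,\omega)$ satisfying $-i_{v_x}\omega = d_{dR}(\gamma_1(x))$ for all $x\in\g$.

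The key step is then to set $\phi^{\eta}:=\gamma\circ f$. Composition of $L_\infty$-morphisms is an $L_\infty$-morphism, so $\phi^{\eta}\colon \h[1]\oplus\g\to L_\infty(M,\omega)$ is automatically an $L_\infty$-morphism; the only thing to verify is the moment-map normalization condition $-i_{v_x}\omega = d_{dR}(\phi^{\eta}_1(x))$ for $x\in\g$. For this I would compute the linear component of the composite: $(\phi^{\eta}_1)|_\g = \gamma_1\circ (f_1|_\g) = \gamma_1\circ \mathrm{Id}_\g = \gamma_1|_\g$, using that $f$ is strict on the degree-$0$ part (i.e. $f_1|_\g=\mathrm{Id}_\g$ and the mixed term $f_1|_{\h}$ does not contribute to the $\g$-component of $(\gamma\circ f)_1$, since the composition formula for linear terms of $L_\infty$-morphisms is just $(\gamma\circ f)_1=\gamma_1\circ f_1$ and $f_1(\g)\subseteq\g$). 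Hence $d_{dR}(\phi^{\eta}_1(x)) = d_{dR}(\gamma_1(x)) = -i_{v_x}\omega$, which is exactly the required condition.

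Concretely, I would also record the explicit components of $\phi^{\eta}$ for later use: $\phi^{\eta}_1|_\g = \gamma_1$, $\phi^{\eta}_1|_{\h} = \gamma_1\circ(\eta|_{\h[2]})$ (i.e. $h\mapsto \eta|_{\h[2]}(h)$ viewed as a constant function via $\gamma_1\colon\RR\to C^\infty(M)$, so $\phi^{\eta}_1|_{\h}(h)=\eta|_{\h[2]}(h)$ as a constant), and $\phi^{\eta}_2 = \gamma_2 + (\text{term involving } \gamma_1 \text{ and } f_1|_{\h})$ coming from the quadratic part of the composition formula $(\gamma\circ f)_2 = \gamma_1\circ f_2 + \gamma_2\circ(f_1\wedge f_1)$. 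Writing these out and checking they satisfy the four equations of Remark \ref{rem:linftymor} is an alternative, purely computational route, but it is strictly more work than the abstract composition argument.

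The main obstacle — really the only subtle point — is making sure that the two objects being composed are genuinely $L_\infty$-morphisms with compatible source and target, so that their composition is legitimate: $f$ lands in $\RR[1]\oplus_{-\omega_{3p}}\g$, which is precisely the source of $\gamma$, and both are minimal Lie $2$-algebras, so there are no unary brackets to worry about and the composition formulas for $L_\infty$-morphisms truncate at the quadratic level. Once this bookkeeping is in place, the statement follows with essentially no further computation; everything substantive has been done in Lemma \ref{lem:solstarlimor} and Prop. \ref{prop:chrisrg}.
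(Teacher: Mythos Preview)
Your proof is correct and matches the paper's approach exactly: the paper simply states that the result follows ``by composition'' without further detail, and you have fleshed out precisely this argument (composing the $L_\infty$-morphism $f$ from Lemma~\ref{lem:solstarlimor} with the moment map $\gamma$, then checking the normalization condition). One small slip in your side remark on explicit components: the extra term in $\phi^{\eta}_2$ beyond $\gamma_2$ is $\gamma_1\circ f_2 = \eta|_{S^2\g[1]}$, not something involving $f_1|_\h$ (compare Remark~\ref{rmk:explicit}~i)), but this does not affect your main argument.
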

\begin{rem} \label{rmk:explicit} 
i) Explicitly, $\nbt{\phi^{\eta}}$
is given \nbt{as follows, where $x,y\in \g, h\in \h$:}
\begin{align*}
\nbt{\phi^{\eta}_1}(x) &= \gamma_1(x)\\ 
\nbt{\phi^{\eta}_1}(h) &= \gamma_1(\eta(h))=\eta(h)\\
\nbt{\phi^{\eta}_2}(x,y) &= \gamma_1(f_2(x,y))+\gamma_2(f_1(x),f_1(y))\\ 
&=\eta(x,y) +\gamma_2(x,y). 
\end{align*}

ii) When $\h=0$, the moment map $\nbt{\phi^{\eta}}$ for $\g$ agrees with the one constructed in \cite[Prop. 9.6]{FRZ}. \nbt{(Notice that when $\h=0$, the choice of $\eta$ amounts to  a choice of primitive of
$\omega_{3p}$ in the Chevalley-Eilenberg complex of $\g$.)}

iii) {The \nbt{ ${\RR[1]\oplus_{-\omega_{3p}} \g}$ moment} map $\gamma$ itself is obtained as in Thm. \ref{cor:explicit} from the solution $\eta(r)=r,\hspace{0.1cm}\eta(x,y)\equiv0,\hspace{0.1cm} r\in\RR,\hspace{0.1cm} x,y\in\g$ of the equation ${d_{CE(L)}}\eta=\omega_{3p}$. Indeed, under the bijection of Lemma \ref{lem:solstarlimor}, $\eta$ corresponds to the identity on 
$\RR[1]\oplus_{-\omega_{3p}} \g$.}
\end{rem}

{
By combining the results of Prop. \ref{cor:exact} and  Thm. \ref{cor:explicit}, we summarize as follows the existence results obtained in this subsection:}
\begin{cor} \label{prop:combined}
Let $(M,\omega)$ be a 2-plectic manifold, and $\g\to \mathfrak{X}(M)$ a Lie algebra taking values in Hamiltonian vector fields. Let $\h[1]\oplus \g$ be a minimal Lie 2-algebra. Fix $p\in M$.

If a moment map for $\h[1]\oplus\g$ exists, then $[\omega_{3p}]_{CE(L)}=0$. {The converse holds whenever $H^1(M)=0$.}
\end{cor}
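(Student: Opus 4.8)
The plan is to assemble Corollary~\ref{prop:combined} directly from the three results proved above, keeping the argument short since almost all the work has been done. First I would recall the key bijection: by Prop.~\ref{prop:primitive}, moment maps for $\h[1]\oplus\g$ are in bijection with primitives $\mu\in C^2$ of $\widetilde{\omega}$ in the total complex $(C,d_{tot})$, so the existence of a moment map is equivalent to the vanishing of the cohomology class $[\widetilde{\omega}]_C\in H^3(C)$.

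For the first implication (``moment map exists $\Rightarrow [\omega_{3p}]_{CE(L)}=0$''), I would simply cite Prop.~\ref{cor:exact}: applying the chain map $r\colon(C,d_{tot})\to({CE(L)},d_{CE(L)})$ and the induced map $[r]\colon H^3(C)\to H^3({CE(L)})$ sends $[\widetilde{\omega}]_C$ to $[\omega_{3p}]_{CE(L)}$, so the latter vanishes whenever the former does. For the converse, assume $H^1(M)=0$ and $[\omega_{3p}]_{CE(L)}=0$. Then there exists $\eta\in{CE(L)}^2$ with $d_{CE(L)}\eta=\omega_{3p}$, and Thm.~\ref{cor:explicit} produces an explicit moment map $\phi^\eta=\gamma\circ f$ for $\h[1]\oplus\g$ (here $\gamma$ is the fixed moment map for $\RR[1]\oplus_{-\omega_{3p}}\g$ from Prop.~\ref{prop:chrisrg}, whose existence uses $H^1(M)=0$, and $f$ is the $L_\infty$-morphism built from $\eta$ via Lemma~\ref{lem:solstarlimor}). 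This establishes both directions.

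Since everything needed is already in hand, there is essentially no obstacle: the proof is a two-line deduction combining Prop.~\ref{cor:exact} and Thm.~\ref{cor:explicit}. The only point deserving a word of care is making explicit that ``$[\omega_{3p}]_{CE(L)}=0$'' is literally the hypothesis that lets one pick a primitive $\eta$ to feed into Thm.~\ref{cor:explicit}, so that the converse is genuinely constructive and not merely an existence statement. I would phrase the proof as follows.

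\begin{proof}
The first assertion is Prop.~\ref{cor:exact}. For the converse, suppose $H^1(M)=0$ and $[\omega_{3p}]_{CE(L)}=0$. Then there exists $\eta\in{CE(L)}^2$ with $d_{CE(L)}\eta=\omega_{3p}$, and by Thm.~\ref{cor:explicit} the composition $\phi^{\eta}=\gamma\circ f$ is a moment map for $\h[1]\oplus\g$, where $f$ is the $L_{\infty}$-morphism built from $\eta$ as in Lemma~\ref{lem:solstarlimor} and $\gamma$ is the moment map for $\RR[1]\oplus_{-\omega_{3p}}\g$ described just below Prop.~\ref{prop:chrisrg}.
\end{proof}
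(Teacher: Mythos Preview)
Your proposal is correct and matches the paper's approach exactly: the paper presents Cor.~\ref{prop:combined} simply as the combination of Prop.~\ref{cor:exact} and Thm.~\ref{cor:explicit}, without giving a separate proof. Your write-up just makes this combination explicit, which is fine.
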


\subsection{A uniqueness result}\label{subsec:unique} \nbt{In this subsection we assume that   $H^1(M)=0$.}
{In \S \ref{subsec:obstr} we addressed the existence of  moment maps for $\h[1]\oplus\g$.}
Here we show that,  any moment map for $\h[1]\oplus\g$ is cohomologous to one constructed by composition in Thm. \ref{cor:explicit}.
 
 Fix $p\in M$.
Consider again the map introduced in \S \ref{subsec:obstr}:
\begin{align*}r:(C,d_{tot})&\to({CE(L)},{d_{CE(L)}})\\ 
\eta\otimes\alpha&\mapsto\eta\cdot\alpha_p.
\end{align*}
\nbt{We remark that  the map  induced in cohomology in degree 2
\[[r]:H^{2}(C)\to H^{2}({CE(L)})\]
is an isomorphism. This follows from the fact that,
by the K\"unneth theorem,
 \[H^2(C)=H^{2}({CE(L)}\otimes\Omega(M))\cong H^{2}({CE(L)})\otimes \RR 
,\]
where we used that ${CE(L)}$ is concentrated in positive degrees and
  $H^{1}(M)=0$.}

\nbt{To any $\eta\in {CE(L)}^2$ with ${d_{CE(L)}}\eta=\omega_{3p}$, in  Thm. \ref{cor:explicit} we associated a $\h[1]\oplus \g$ moment map $\phi^{\eta}$, which we now view as an element of $C^2$.} 

\begin{lemma}\label{lem:r}
For all $\eta$ as above:	$r(\phi^{\eta})=\eta$
\end{lemma}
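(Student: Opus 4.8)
The plan is to prove the identity by a direct computation, using the explicit description of the moment map $\phi^{\eta}$ recorded in Remark \ref{rmk:explicit} together with the definition of the chain map $r$. Recall that $r$ sends $\xi\otimes\alpha\in CE(L)\otimes\Omega(M)$ to $\xi\cdot\alpha_p$, where $\alpha_p=0$ whenever $\alpha$ has positive de Rham degree and $\alpha_p$ is the value at $p$ when $\alpha\in C^{\infty}(M)$. So, viewing $\phi^{\eta}$ inside $C^2=\g[1]^*\otimes\Omega^1(M)\oplus(\h[2]^*\oplus S^2(\g[1])^*)\otimes C^{\infty}(M)$, applying $r$ amounts to discarding the $\Omega^1(M)$-valued component and evaluating the two $C^{\infty}(M)$-valued components at $p$.

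First I would dispose of the $\g[1]^*$-component: by Remark \ref{rmk:explicit}(i) one has $\phi^{\eta}_1(x)=\gamma_1(x)\in\Omega^1_{\mathrm{Ham}}(M)$ for $x\in\g$, a genuine $1$-form, so this component is annihilated by $r$ — which matches the fact that $\eta\in CE(L)^2=\h[2]^*\oplus S^2(\g[1])^*$ contributes nothing to $\g[1]^*\otimes\Omega^{\geq 1}(M)\subset C^2$. Then I would evaluate the remaining two components at $p$. For $h\in\h$, Remark \ref{rmk:explicit}(i) gives $\phi^{\eta}_1(h)=\eta(h)$, the constant function with value $\eta|_{\h[2]}(h)$, so $r$ sends this component to $\eta|_{\h[2]}$. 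For $x,y\in\g$, it gives $\phi^{\eta}_2(x,y)=\eta(x,y)+\gamma_2(x,y)$, where $\eta(x,y)=\eta|_{S^2\g[1]}(x,y)$ is constant and $\gamma_2(x,y)$ is the function appearing in the construction of $\gamma$ below Prop. \ref{prop:chrisrg}, normalized by $\gamma_2(x,y)_p=0$; hence its value at $p$ is $\eta|_{S^2\g[1]}(x,y)$, so $r$ sends this component to $\eta|_{S^2\g[1]}$.

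Putting the three cases together yields $r(\phi^{\eta})=\eta|_{\h[2]}+\eta|_{S^2\g[1]}=\eta$. The computation is essentially just bookkeeping of form degrees, the one genuine ingredient being the base-point normalization $\gamma_2(\cdot,\cdot)_p=0$ of the auxiliary moment map $\gamma$; accordingly I do not expect any real obstacle here.
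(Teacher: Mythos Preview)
Your proof is correct and follows essentially the same approach as the paper: you decompose $\phi^{\eta}$ into its three components using Remark \ref{rmk:explicit}(i), observe that $r$ kills the $\Omega^1$-valued piece, and use the normalization $\gamma_2(\cdot,\cdot)_p=0$ to conclude that the remaining evaluations at $p$ recover $\eta|_{\h[2]}$ and $\eta|_{S^2\g[1]}$. This is exactly what the paper does.
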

\begin{proof}
	Using Remark \ref{rmk:explicit} i) we find: 
	\begin{align*}
	\phi_1^{\eta}|_{\g} &=\gamma_1|_{\g}\hspace{3.3cm}\in{\g[1]}^{\ast}\otimes \Omega^{1}(M)\\
	\phi_1^{\eta}|_{\h}&=\eta|_{\h[2]} \otimes 1 \hspace{2.45cm}\in{\h[2]}^{\ast}\otimes C^{\infty}(M)\\
	\phi^{\eta}_2&=(\eta|_{S^2\g[1]}\otimes 1 + \gamma_{2}) \hspace{1cm}\in {S}^{2}{\g[1]}^{\ast}\otimes C^{\infty}(M)
	\end{align*}
	Now notice that 
	$r(\phi_1^{\eta}|_{\g})=0$ by definition of the map $r$, and
	$r(\phi_1^{\eta}|_{\h})
	=\nbt{\eta|_{\h[2]}}$.
	
	Finally $r(\phi^{\eta}_2)
	=\eta|_{{S}^{2}{\g[1]}}+r(\gamma_{2})=\eta|_{S^2\g[1]}$. Indeed
$r(\gamma_{2})	=\gamma_{2p}=0$,  because $\gamma_2(x,y)$ vanishes at point $p$ for any $x,y\in\g$ by construction (see \eqref{eq:gamma2}).
	Hence
	$r(\phi^{\eta})
	=\eta|_{\h[2]}+\eta|_{S^{2}\g[1]}=\eta$.
\end{proof}

\nbt{The difference of any two moment maps is a closed element of $C^2$, by Prop. \ref{prop:primitive}.}
Extending \cite[Rem. 7.10] {FLRZ} we define:
\begin{defi}
Two moment maps $\mu, \mu'\in \nbt{C^2}$ are called 
\emph{inner equivalent} if $\mu - \mu'=d_{tot}\alpha$ for some $\alpha\in C^{1}$. 
\end{defi}

{The following proposition gives conditions to ensure that all 
	moment maps are equivalent to those constructed earlier.
}
\begin{prop}\label{prop:inverse} Let $M$ be a manifold with $H^1(M)=0$. If $\mu\in C^{2}$ is a moment map for $\h[1]\oplus\g$, then $\mu$ and $\phi^{r(\mu)}$ are inner equivalent.
\end{prop}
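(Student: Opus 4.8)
The goal is to show that an arbitrary moment map $\mu\in C^2$ differs from the explicitly constructed moment map $\phi^{r(\mu)}$ by an exact element $d_{tot}\alpha$ with $\alpha\in C^1$. The natural strategy is to first show that $\mu$ and $\phi^{r(\mu)}$ are \emph{cohomologous} in $C$ --- i.e. that their difference is $d_{tot}$-exact --- and then to notice that the primitive can be chosen in the right degree ($C^1$) automatically, since $\mu-\phi^{r(\mu)}$ lives in $C^2$ and any primitive of a degree-$2$ element necessarily sits in $C^1$.

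First I would observe that both $\mu$ and $\phi^{r(\mu)}$ are $d_{tot}$-closed elements of $C^2$: indeed, $\widetilde{\omega}$ is $d_{tot}$-closed by Lemma 3.1, and by Prop.~\ref{prop:primitive} any moment map $\nu$ satisfies $d_{tot}\nu=\widetilde{\omega}$, so the difference of two moment maps is $d_{tot}$-closed. Hence $\mu-\phi^{r(\mu)}$ defines a class in $H^2(C)$. The key step is then to show this class vanishes. For this I would apply the map $r$: by Lemma~\ref{lem:r} we have $r(\phi^{r(\mu)})=r(\mu)$, so $r(\mu-\phi^{r(\mu)})=0$ in $CE(L)^2$, hence certainly $[r]([\mu-\phi^{r(\mu)}]_C)=0$ in $H^2(CE(L))$. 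Now invoke the fact, already recorded in the excerpt (the remark preceding Lemma~\ref{lem:r}), that under the hypothesis $H^1(M)=0$ the induced map $[r]:H^2(C)\to H^2(CE(L))$ is an \emph{isomorphism} by Künneth. Therefore $[\mu-\phi^{r(\mu)}]_C=0$, i.e. there exists some $\alpha\in C^1$ with $d_{tot}\alpha = \mu - \phi^{r(\mu)}$. By the definition of inner equivalence, this says precisely that $\mu$ and $\phi^{r(\mu)}$ are inner equivalent.

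The main subtlety --- rather than a genuine obstacle --- is to make sure the primitive $\alpha$ can be taken in $C^1$ and not merely in the full total complex. But this is automatic: $C=CE(L)\otimes\Omega(M)$ is a nonnegatively graded complex (since $CE(L)$ is concentrated in positive degrees and $\Omega(M)$ in nonnegative degrees), so any element whose $d_{tot}$ equals a degree-$2$ element lies in $C^1$. A second point worth stating carefully is why $r(\phi^{r(\mu)})=r(\mu)$: this is exactly Lemma~\ref{lem:r} applied to $\eta=r(\mu)$, using that $r(\mu)$ does satisfy $d_{CE(L)}r(\mu)=\omega_{3p}$ because $r$ is a chain map and $d_{tot}\mu=\widetilde{\omega}$, so $d_{CE(L)}r(\mu)=r(d_{tot}\mu)=r(\widetilde{\omega})=\omega_{3p}$; this is what makes $\phi^{r(\mu)}$ well-defined via Thm.~\ref{cor:explicit} in the first place. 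Assembling these observations gives the claim with essentially no computation beyond what is already in the excerpt.
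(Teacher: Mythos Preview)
Your proof is correct and follows essentially the same route as the paper: apply $r$ to $\mu-\phi^{r(\mu)}$, use Lemma~\ref{lem:r} to see this vanishes, and then invoke that $[r]$ is an isomorphism on $H^2$ (via K\"unneth and $H^1(M)=0$) to conclude that the class $[\mu-\phi^{r(\mu)}]$ is zero in $H^2(C)$. The extra remarks you make about well-definedness of $\phi^{r(\mu)}$ and the degree of the primitive are accurate elaborations, but the core argument is identical.
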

\begin{rem} Note that, since $r$ is a chain map and by Prop. \ref{prop:primitive},
if $\mu$ is a moment map, then $r(\mu)$ is a solution of   ${d_{CE(L)}}\eta=\omega_{3p}$. Hence $\phi^{r(\mu)}$ is indeed well-defined.
\end{rem}

\begin{rem} \nft{In \S\ref{section:exuniq} we fixed a choice of linear map $\gamma_1\colon \g\to {\Omega}_{\textrm{Ham}}^{1}(M)$ providing Hamiltonian 1-forms for the generators of the  action. For any  $\eta \in {CE(L)}^2$ satisfying $d\eta=\omega_{3p}$,
the moment map $\phi^{\eta}$ constructed in Thm. \ref{cor:explicit} depends on this choice. However, making a different choice for 
$\gamma_1$ delivers a moment map that is inner equivalent to $\phi^{\eta}$. This can be seen using Lemma \ref{lem:r} and Prop. \ref{prop:inverse}, or also by a direct computation.}
\end{rem}

\begin{proof}
We have
	$r(\mu -\phi^{r(\mu)})=r(\mu)-r(\phi^{r(\mu)})=0$ 	by Lemma \ref{lem:r}. In particular, for the map $[r]$ induced in cohomology, we have $[r][\mu -\phi^{r(\mu)}]=0$. But 
	{the map $[r]$} is an isomorphism \nbt{in degree 2}, hence the cohomology class $[\mu -\phi^{r(\mu)}]$ in $H^{2}(C)$ also vanishes, i.e., $\mu - \phi^{r(\mu)}=d_{tot}\alpha$ for some $\alpha\in C^{1}$.
\end{proof}

Prop. \ref{prop:inverse} immediately implies: 
\begin{cor}\label{cor:universal}
The Lie 2-algebra $\RR[1]\oplus_{-\omega_{3p}}\g$ is \emph{universal} in the following sense: provided $H^1(M)=0$,  any moment map for a Lie 2-algebra $\h[1]\oplus\g$
	is inner equivalent to one that factors through $\RR[1]\oplus_{-\omega_{3p}}\g$. 
\end{cor}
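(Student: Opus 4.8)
\textbf{Proof plan for Corollary \ref{cor:universal}.}

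The plan is to unwind the definition of ``universal'' and check that it is an immediate consequence of Prop. \ref{prop:inverse} together with the composition result Thm. \ref{cor:explicit}. So let $\h[1]\oplus\g$ be any minimal Lie 2-algebra and let $\mu\in C^2$ be a moment map for it, assuming $H^1(M)=0$. First I would form the element $r(\mu)\in{CE(L)}^2$, which by the remark after Prop. \ref{prop:inverse} (i.e. using that $r$ is a chain map and Prop. \ref{prop:primitive}) satisfies ${d_{CE(L)}}\,r(\mu)=\omega_{3p}$. Thus Lemma \ref{lem:solstarlimor} applies and produces an $L_{\infty}$-morphism $f\colon\h[1]\oplus\g\to\RR[1]\oplus_{-\omega_{3p}}\g$ with $f_1|_{\g}=\mathrm{Id}_{\g}$, and by Thm. \ref{cor:explicit} the composite $\phi^{r(\mu)}=\gamma\circ f$ is a moment map for $\h[1]\oplus\g$ which manifestly factors through $\RR[1]\oplus_{-\omega_{3p}}\g$. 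Then Prop. \ref{prop:inverse} says precisely that $\mu$ and $\phi^{r(\mu)}$ are inner equivalent, which is the assertion. Since $\mu$ was arbitrary, every moment map for $\h[1]\oplus\g$ is inner equivalent to one factoring through $\RR[1]\oplus_{-\omega_{3p}}\g$, establishing universality.

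The only point requiring a word of care is why the factorization $\phi^{r(\mu)}=\gamma\circ f$ is genuinely ``a factorization through $\RR[1]\oplus_{-\omega_{3p}}\g$'' in the intended $L_{\infty}$-categorical sense: here $f$ is an $L_{\infty}$-morphism $\h[1]\oplus\g\to\RR[1]\oplus_{-\omega_{3p}}\g$ and $\gamma$ is an $L_{\infty}$-morphism $\RR[1]\oplus_{-\omega_{3p}}\g\to L_{\infty}(M,\omega)$ satisfying the moment-map normalization $-i_{v_x}\omega=d_{dR}(\gamma_1(x))$, so the composite is a moment map for $\h[1]\oplus\g$ in the sense of Def. \ref{def:gmomap} extended to Lie 2-algebras, and this is exactly the content of Thm. \ref{cor:explicit} and Remark \ref{rmk:explicit}. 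I do not expect any real obstacle: the corollary is a formal repackaging, and all the analytic input (the hypothesis $H^1(M)=0$, the isomorphism $[r]\colon H^2(C)\to H^2({CE(L)})$, the existence of $\gamma$) has already been absorbed into Prop. \ref{prop:inverse} and Thm. \ref{cor:explicit}. The proof is therefore just the two sentences: apply Thm. \ref{cor:explicit} to $\eta=r(\mu)$ to get $\phi^{r(\mu)}$, which factors through $\RR[1]\oplus_{-\omega_{3p}}\g$; apply Prop. \ref{prop:inverse} to conclude $\mu$ is inner equivalent to it.
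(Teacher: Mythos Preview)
Your proposal is correct and matches the paper's approach exactly: the paper simply states that the corollary ``immediately implies'' from Prop.~\ref{prop:inverse}, and you have spelled out the implicit steps (form $r(\mu)$, apply Thm.~\ref{cor:explicit} to get $\phi^{r(\mu)}=\gamma\circ f$, then invoke Prop.~\ref{prop:inverse}). There is nothing missing.
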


\[\xymatrix{
	\h[1]\oplus \g \ar@{-->}[dr] \ar@{->}[drr] \ar@{->}[ddr]&    &     \\
	& \RR[1]\oplus_{-\omega_{3p}} \g \ar@{->}[d]\ar@{->}_{\gamma}[r]  & L_{\infty}(M,\omega)  \ar@{->}[d]\\
	& \g\ar@{->}[r] & \mathfrak{X}_{\textrm{Ham}}(M)
}
\]

\begin{rem}\label{rem:heis}
{
Let $\rho\colon \g\to \mathfrak{X}(M), x\mapsto v_{x}$ be a Lie algebra morphism taking values in Hamiltonian vector fields. Using  methods from homotopy theory, 
\cite[\S3.4]{FiorenzaRogersSchrLocObs} constructs a Lie 2-algebra $\mathfrak{h}\mathfrak{e}\mathfrak{i}\mathfrak{s}_{\rho}(\g)$ admitting a moment map for this infinitesimal action.
As the referee explained to us, under the assumption $H^1(M)=0$, this Lie 2-algebra is precisely $\RR[1]\oplus_{-\omega_{3p}}\g$, and one can recover Cor. \ref{cor:universal} in a conceptual manner using homotopy theory.
}
\end{rem}


\section{{Revisiting the existence results}}\label{sec:revisit}
As earlier, let $\h[1]\oplus \g$ be a minimal Lie 2-algebra, let $\omega$ be a 2-plectic form on a manifold $M$, and let $\g\to \mathfrak{X}(M),
x\mapsto v_{x}$ be a Lie algebra morphism taking values in Hamiltonian vector fields.

{An answer to the existence question for $\h[1]\oplus \g$ moment maps was given in Cor. \ref{prop:combined}, in terms of the \nbt{cohomology of the Chevalley-Eilenberg complex ${CE(L)}$ of the Lie 2-algebra. However the latter complex is quite large and involved.
In this section we rephrase that answer in two ways}: one that is explicit and easily applicable to examples (\S \ref{subsec:h3g}), and one that is {phrased  directly in terms of the Lie 2-algebra (Prop. \ref{prop:which2}) rather than in terms of its constituents (as in Lemma \ref{lem:minimal})}.}

\subsection{\nbt{An explicit characterization of existence} {in terms of $H^3(\g)$}}
\label{subsec:h3g}  

\nbt{In this subsection, we   answer the question of existence of $\h[1]\oplus \g$ moment maps in terms of the familiar Lie algebra cohomology of $\g$. 
}

{Cor. \ref{prop:combined} expresses the existence of a moment map for $\h[1]\oplus\g$ in terms of the vanishing of $[\omega_{3p}]_{CE(L)}$. Recall that the latter condition is equivalent to the existence of a solution of the system 		$\eqref{eq:star}$.}
We now formulate the first equation of the system \eqref{eq:star} in terms of the Lie algebra cohomology of $\g$.
\begin{lemma}
	For any $\xi \in [\g,\h]^{\circ}\subset \h^*$, the element $d_3\xi\in \wedge^3 \g^*$ is $d_{\g}$-closed.
\end{lemma}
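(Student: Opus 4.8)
The statement to prove is that for $\xi\in[\g,\h]^{\circ}\subset\h^*$, the element $d_3\xi\in\wedge^3\g^*$ is $d_\g$-closed, where $d_\g$ is the Chevalley-Eilenberg differential of $\g$ with trivial coefficients. The natural approach is to exploit the fact that $d_{CE(L)}=-d_2+d_3$ squares to zero on $CE(L)$ (since the Lie 2-algebra is minimal), together with the characterization of $[\g,\h]^{\circ}$ given in Remark \ref{rem:gh}: an element $\xi\in\h^*$ lies in $[\g,\h]^{\circ}$ precisely when $d_2\xi=0$.

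\textbf{Key steps.} First I would start from $\xi\in\h[2]^*=CE(L)^1$ and apply $d_{CE(L)}$ to it. Since $d_{CE(L)}=-d_2+d_3$ (the unary bracket vanishes by minimality), we get $d_{CE(L)}\xi=-d_2\xi+d_3\xi$. Here $d_2\xi\in\g[1]^*\otimes\h[2]^*$ is the piece coming from the $\g$-representation on $\h$, and $d_3\xi\in S^3(\g[1])^*\cong\wedge^3\g^*$ is the piece coming from the trinary bracket $c$. Next, using $d_{CE(L)}^2=0$, I would expand $0=d_{CE(L)}^2\xi=d_{CE(L)}(-d_2\xi+d_3\xi)$ and then decompose this degree-$3$ element of $CE(L)$ into its homogeneous components with respect to the bigrading by (number of $\h[2]^*$-factors, number of $\g[1]^*$-factors). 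Since $d_2$ preserves the $\h$-count while $d_3$ drops it by one, the component with zero $\h[2]^*$-factors — i.e. the purely $\wedge^4\g^*$ part — must vanish on its own. Now, by the hypothesis $\xi\in[\g,\h]^{\circ}$ we have $d_2\xi=0$, so only the $d_3\xi$ term contributes to that component, and the purely-$\g$ part of $d_{CE(L)}(d_3\xi)$ is exactly $-d_2(d_3\xi)$, which equals $d_\g(d_3\xi)$ on elements of $S^{\bullet}(\g[1])^*$ by item 2) in the proof of Lemma 3.1. Hence $d_\g(d_3\xi)=0$, as desired. Alternatively — and perhaps more transparently — one can argue directly from cocycle identity \eqref{eq:cocycle}: pairing $\xi$ (vanishing on $[\g,\h]$) against both sides of the statement that $c$ is a $\g$-cocycle kills all the ``representation'' terms $x\cdot c(\ldots)$ (since these land in $[\g,\h]$) and leaves exactly the identity $(d_\g(\xi\circ c))(x,y,z,u)=0$, i.e. $d_\g(d_3\xi)=0$, since $d_3\xi=\pm\,\xi\circ c$ up to sign.

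\textbf{Main obstacle.} There is no serious difficulty here; the statement is essentially bookkeeping. The one point requiring care is tracking signs and the precise identification $S^k(\g[1])^*\cong\wedge^k\g^*$, together with the sign relating $d_3\xi$ to $\xi\circ c$ and the sign in $-d_2=d_\g$ from Lemma 3.1; these must be handled consistently so that the vanishing statement comes out clean rather than up to an irrelevant nonzero scalar (which would anyway suffice). A secondary point is to make explicit — or cite Remark \ref{rem:gh} for — the equivalence between $\xi\in[\g,\h]^{\circ}$ and $d_2\xi=0$, since that is what lets the representation terms drop out. I expect the whole argument to fit in a few lines once the bigrading decomposition of $d_{CE(L)}^2\xi=0$ (or the evaluation of \eqref{eq:cocycle} against $\xi$) is written down.
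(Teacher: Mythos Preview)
Your proposal is correct; in fact your ``alternative'' route---pairing $\xi$ against the cocycle identity \eqref{eq:cocycle} for $c$ so that the representation terms $x\cdot c(\ldots)\in[\g,\h]$ are annihilated by $\xi\in[\g,\h]^\circ$---is exactly the paper's proof, carried out as a direct four-line computation of $d_\g(d_3\xi)(x,y,z,u)$.

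Your primary approach via $d_{CE(L)}^2=0$ is a valid conceptual repackaging of the same computation: extracting the $S^4(\g[1])^*$ component of $d_{CE(L)}^2\xi=0$ gives $d_2(d_3\xi)=-d_3(d_2\xi)$, and the hypothesis $d_2\xi=0$ (equivalently $\xi\in[\g,\h]^\circ$, cf.\ Remark~\ref{rem:gh}) kills the right-hand side. This buys you a coordinate-free argument and makes clear that the lemma is just the relevant graded piece of the Jacobi identity for $d_{CE(L)}$; the paper's direct computation, by contrast, is slightly more self-contained and avoids tracking the bigrading. One small slip: $\xi\in\h[2]^*$ sits in $CE(L)^2$, not $CE(L)^1$ (so $d_{CE(L)}^2\xi$ has degree $4$, not $3$), but this does not affect the argument.
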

\begin{proof} Recall that $d_3$ was defined in \S\ref{subsec:lie2}. For all $x,y,z,u\in \g$ we compute 
	\begin{align*}
	d_{\g}(d_3\xi)(x,y,z,u) &=-(d_3\xi)([x,y],z,u)+(d_3\xi)([x,z],y,u)-(d_3\xi)([x,u],y,z)\\ 
	& \;\;\;\, -(d_3\xi)([y,z],x,u)+(d_3\xi)([y,u],x,z)-(d_3\xi)([z,u],x,y)\\
	 & = \xi (c([x,y],z,u)) - \xi(c([x,z],y,u)) + \xi(c([x,u],y,z))\\
	 & \;\;\;\, + \xi(c([y,z],x,u)) - \xi(c([y,u],x,z)) + \xi(c([z,u],x,y))\\
	&=  \xi\big(x\cdot c(y,z,u) - y\cdot c(x,z,u) + z\cdot c(x,y,u) - u\cdot c(x,y,z)\big)\\
	&= 0,
	\end{align*}
where in the {third equality} we have used that $c$ is a 3-cocycle in the Chevalley-Eilenberg complex of $\g$ with values in the representation $\h$, {and in the last equality the condition $\xi \in [\g,\h]^{\circ}$.}
\end{proof}

Hence we can consider the linear map 
\begin{align}\label{eq:Psi}
\Psi:[\g,\h]^{\circ}&\rightarrow H^{3}(\g)\\
\xi &\mapsto [d_{3}\xi]_{\g}=[\xi\circ c]_{\g}. \nonumber
\end{align}
\nbt{to the third Lie algebra cohomology group} of $\g$.

\begin{lemma}\label{prop:stariff}
	{i) Let ${\eta} \in {\h[2]}^\ast\oplus S^2(\g[1])^\ast$ 
{satisfy $\nrt{d_{CE(L)}}{\eta}=\omega_{3p}$.}	Then ${\eta}|_{{\h[2]}}$ 
 lies in the preimage of $[\omega_{3p}]_{\g}$ under $\Psi$.}
	
	{ii)
		Conversely, let   $\xi \in [\g,\h]^{\circ}$ lie in the preimage of $[\omega_{3p}]_{\g}$ under $\Psi$. Then we can find $\phi \in S^2\g[1]$ so that $\xi+\phi$ satisfies
		$\nrt{d_{CE(L)}}(\xi+\phi)=\omega_{3p}$.}
\end{lemma}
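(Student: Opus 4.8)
The plan is to unwind the definitions on both sides. Recall from Lemma \ref{lemma:star} that $d_{CE(L)}\eta = \omega_{3p}$ for $\eta = \eta|_{\h[2]} + \eta|_{S^2\g[1]}$ is equivalent to the system \eqref{eq:star}, namely $d_\g(\eta|_{S^2\g[1]}) + d_3(\eta|_{\h[2]}) = \omega_{3p}$ together with $d_2(\eta|_{\h[2]}) = 0$; and by Remark \ref{rem:gh} the second condition says precisely that $\eta|_{\h[2]} \in [\g,\h]^\circ$, so $\Psi$ is defined on it. This reduces both parts of the statement to bookkeeping about the single equation $d_\g(\eta|_{S^2\g[1]}) + d_3(\eta|_{\h[2]}) = \omega_{3p}$ in $\wedge^3\g^*$.

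For part i), I would argue as follows. Given $\eta$ satisfying $d_{CE(L)}\eta = \omega_{3p}$, the first equation of \eqref{eq:star} reads $\omega_{3p} - d_3(\eta|_{\h[2]}) = d_\g(\eta|_{S^2\g[1]})$, which is manifestly $d_\g$-exact. Passing to cohomology classes in $H^3(\g)$, this gives $[\omega_{3p}]_\g = [d_3(\eta|_{\h[2]})]_\g = \Psi(\eta|_{\h[2]})$ by the definition \eqref{eq:Psi} of $\Psi$. Hence $\eta|_{\h[2]}$ is in the preimage of $[\omega_{3p}]_\g$ under $\Psi$, which is the claim. For part ii), run the argument backwards: given $\xi \in [\g,\h]^\circ$ with $\Psi(\xi) = [\omega_{3p}]_\g$, i.e.\ $[d_3\xi]_\g = [\omega_{3p}]_\g$ in $H^3(\g)$, the difference $\omega_{3p} - d_3\xi \in \wedge^3\g^*$ is $d_\g$-closed (it is $\omega_{3p}$, which is a Lie algebra cocycle, minus $d_3\xi$, which is $d_\g$-closed by the preceding lemma) and $d_\g$-exact (by the cohomology equality). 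So choose $\phi \in S^2\g[1] \cong \wedge^2\g^*$ with $d_\g\phi = \omega_{3p} - d_3\xi$; then $\xi + \phi$ satisfies the first equation of \eqref{eq:star}, and the second equation holds because $\xi \in [\g,\h]^\circ$. By Lemma \ref{lemma:star}, $d_{CE(L)}(\xi+\phi) = \omega_{3p}$, as desired.

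There is essentially no serious obstacle here; the content of the statement is entirely contained in the chain-level identity \eqref{eq:star} plus the observation that $\omega_{3p}$ is itself a $d_\g$-cocycle (so that the relevant difference is closed and "exact" is the only condition left to impose). The one point to state carefully is the identification $S^2(\g[1])^* \cong \wedge^2\g^*$ under which $-d_2$ becomes the Lie algebra differential $d_\g$ (used already in the proof of Lemma 3.1), so that choosing $\phi$ as a primitive of a $d_\g$-exact $3$-form makes sense as an element of $CE(L)^2$. I would therefore write the proof as a short two-paragraph affair: first invoke Lemma \ref{lemma:star} and Remark \ref{rem:gh} to restate the hypothesis/conclusion as the single equation $d_\g(\eta|_{S^2\g[1]}) = \omega_{3p} - d_3(\eta|_{\h[2]})$ with $\eta|_{\h[2]} \in [\g,\h]^\circ$, then read off part i) by taking classes and part ii) by exhibiting the primitive $\phi$.
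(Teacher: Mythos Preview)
Your proposal is correct and follows essentially the same approach as the paper: invoke Lemma~\ref{lemma:star} and Remark~\ref{rem:gh} to reduce to the single equation $d_\g(\eta|_{S^2\g[1]}) + d_3(\eta|_{\h[2]}) = \omega_{3p}$ with $\eta|_{\h[2]}\in[\g,\h]^\circ$, then pass to cohomology for part~i) and reverse the argument for part~ii). The paper's own proof is a two-sentence version of exactly this.
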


\begin{proof}
	{ If  ${\eta}$ is a solution to the system 
		$\eqref{eq:star}$, then the second equation of the system says that ${\eta}|_{{\h[2]}}\in [\g,\h]^{\circ}$ (see Rem. \ref{rem:gh}), and taking cohomology classes in the first equation
		we see that this element is mapped by $\Psi$ to $[\omega_{3p}]_{\g}$. The converse is proved reversing the argument.}
\end{proof}
{The above lemma immediately implies:
\begin{prop}\label{prop:image}
$[\omega_{3p}]_{CE(L)}=0$ if{f} $[\omega_{3p}]_{\g}$ lies in the image of $\Psi$.  
\end{prop}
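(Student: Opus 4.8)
The plan is to reduce both sides of the claimed equivalence to the single assertion that the system \eqref{eq:star} admits a solution, and then quote Lemma \ref{prop:stariff}. By definition of Chevalley--Eilenberg cohomology, and since $\omega_{3p}=r(\widetilde{\omega})\in {CE(L)}^3$ is $d_{CE(L)}$-closed, the condition $[\omega_{3p}]_{CE(L)}=0$ means precisely that there is some $\eta\in {CE(L)}^2=\h[2]^*\oplus S^2(\g[1])^*$ with $d_{CE(L)}\eta=\omega_{3p}$; by Lemma \ref{lemma:star} the existence of such an $\eta$ is in turn equivalent to the solvability of the system \eqref{eq:star}. So it suffices to show that \eqref{eq:star} is solvable if and only if $[\omega_{3p}]_{\g}\in\operatorname{im}\Psi$.

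For the forward direction I would take a solution $\eta=\eta|_{\h[2]}+\eta|_{S^2\g[1]}$ of \eqref{eq:star}. Its second equation forces $\eta|_{\h[2]}\in[\g,\h]^{\circ}$ (Rem. \ref{rem:gh}), so $\Psi(\eta|_{\h[2]})$ is defined, and taking cohomology classes in the first equation of \eqref{eq:star} gives $[\omega_{3p}]_{\g}=[d_{\g}(\eta|_{S^2\g[1]})+d_3(\eta|_{\h[2]})]_{\g}=[d_3(\eta|_{\h[2]})]_{\g}=\Psi(\eta|_{\h[2]})$, whence $[\omega_{3p}]_{\g}\in\operatorname{im}\Psi$; this is exactly Lemma \ref{prop:stariff}(i). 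For the converse, given $\xi\in[\g,\h]^{\circ}$ with $\Psi(\xi)=[\omega_{3p}]_{\g}$, the class $[\omega_{3p}-d_3\xi]_{\g}$ vanishes, so I may pick $\phi\in S^2(\g[1])^*$ with $d_{\g}\phi=\omega_{3p}-d_3\xi$; then $\xi+\phi$ solves \eqref{eq:star} — the second equation because $\xi\in[\g,\h]^{\circ}$, the first by the choice of $\phi$ — hence $d_{CE(L)}(\xi+\phi)=\omega_{3p}$ and $[\omega_{3p}]_{CE(L)}=0$. This is exactly Lemma \ref{prop:stariff}(ii).

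Since the genuinely substantive points — that $\Psi$ of \eqref{eq:Psi} is a well-defined linear map and that the two implications above hold — are already isolated in the lemmas of this subsection, I do not expect any real obstacle: the proposition is a direct corollary (and the author indeed states it as an immediate consequence of the preceding lemma). The only place deserving a moment's care is keeping the bookkeeping straight, namely matching the $\h[2]$-component of a would-be primitive with the second equation of \eqref{eq:star} and its $S^2(\g[1])$-component with the first, so that the step "pass to cohomology classes" is applied to the correct equation.
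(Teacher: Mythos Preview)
Your proof is correct and follows exactly the paper's approach: the paper states the proposition as an immediate consequence of Lemma~\ref{prop:stariff}, and you have simply unpacked both directions of that lemma (together with Lemma~\ref{lemma:star}) in full detail. There is nothing to add.
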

}

We now give an alternative characterization of the map $\Psi$. 
Since $[\g,\h]$ is a subrepresentation of $\h$, we can look at the quotient representation $\h/{[\g,\h]}$, which is a trivial representation. Define $c_{red}:=pr\circ c:\wedge^{3}\g\to\h/[\g,\h]$. This map is a cocycle in the Lie algebra cohomology of $\g$ with values in $\h/[\g,\h]$, {as a consequence of the facts that $c$ is a cocycle and the quotient map $\h\to \h/{[\g,\h]}$ is a morphism of representations.}
Thus
 \begin{itemize}
\item the Lie algebra $\g$,
\item  the trivial $\g$-representation $\h/{[\g,\h]}$,
\item the 3-cocycle $c_{red}=pr\circ c$ 
\end{itemize}
define a minimal Lie 2-algebra
(see \S \ref{sec:momap2}). Its underlying graded vector space is  $(\h/[\g,\h])[1]\oplus \g$ and we will refer to this Lie 2-algebra as the \textit{reduced Lie 2-algebra} corresponding to $\h[1]\oplus \g$.

We can rewrite the map $\Psi$ as follows, using the reduced Lie 2-algebra:
\begin{align}\label{eq:Psired}
\Psi:(\h/[\g,\h])^{\ast}&\rightarrow H^{3}(\g)\\  \widetilde{\xi} &\mapsto [\widetilde{\xi}\circ c_{red} ]_{\g} \nonumber
\end{align}
{In other words, $\Psi$ maps $\widetilde{\xi}$ to the $\widetilde{\xi}$-component of\footnote{The isomorphism holds since $\h/[\g,\h]$ is a trivial representation of $\g$.}  
	$[c_{red}]_{\g}\in H^{3}(\g,  \h/[\g,\h])\cong H^{3}(\g)\otimes\h/[\g,\h]$.  Hence 
we can rephrase Prop. \ref{prop:image} by saying that 
$[\omega_{3p}]_{CE(L)}=0$ if{f}  $[c_{red}]_{\g}$ has a component equal to $[\omega_{3p}]_{\g}$.
}\\
 
\begin{rem}\label{rem:complred} 
	{Suppose the representation $\h$ is a completely reducible representation (this happens for instance when $\g$ is semisimple or is integrated by a compact Lie group), i.e. $\h=\oplus_{i=1}^n\h_i$ is a direct sum of irreducible sub-representations $\h_i$.
		Notice that for every $i$, either $[\g,\h_i]=\h_i$ or $\h_i$ is the trivial 1-dimensional representation. We may reorder the indices so that the 
		trivial 1-dimensional subrepresentations (if any) are exactly $\h_1,\dots,\h_k$ for some $k \le n$. Then  $\h_{red}=\oplus_{i=1}^k\h_i$ consists of these trivial sub-representations.
		Decomposing into components a $\h$-valued 3-cocycle $c\in \wedge^3 \g^*\otimes \h$ we obtain 
		a $\h_i$-valued 3-cocycle $c_i$ for every $i$, and $c_{red}$ has components $c_1,\dots,c_k$.}
	{Hence $[\omega_{3p}]_{\g}$ lies in the image of $\Psi$ if{f} some linear combination of $[c_1],\dots,[c_k]$ equals $[\omega_{3p}]_{\g}$.}
	\end{rem}

We now apply Prop. \ref{prop:image} to obtain existence statements and obstructions for $\h[1]\oplus \g$ moment maps.
\nbt{The case $[\omega_{3p}]_{\g}=0$ is not interesting, at least if $H^1(M)$ vanishes, since then a $\g$ moment map   exists (Prop. \ref{prop:9596}), and thus a $\h[1]\oplus \g$ moment map exist too (Cor. \ref{cor:gtohg}).}
Hence now we focus on the case $[\omega_{3p}]_{\g}\neq 0$.

\begin{prop}\label{prop:cred0}
	{Assume $[\omega_{3p}]_{\g}\neq 0$. If $[c_{red}]_{\g}=0\in H^{3}(\g,  \h/[\g,\h])$, then there exists no $\h[1]\oplus \g$ moment map.}
\end{prop}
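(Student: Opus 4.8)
The plan is to derive this directly from Proposition \ref{prop:image}. That proposition tells us that a $\h[1]\oplus\g$ moment map exists (under the equivalent reformulation via Cor. \ref{prop:combined}, at least the necessary condition $[\omega_{3p}]_{CE(L)}=0$) if and only if $[\omega_{3p}]_{\g}$ lies in the image of the linear map $\Psi\colon (\h/[\g,\h])^* \to H^3(\g)$ of \eqref{eq:Psired}. So the statement reduces to a purely linear-algebraic observation about $\Psi$.

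First I would observe that, by the description of $\Psi$ given just after \eqref{eq:Psired}, the map $\Psi$ sends $\widetilde{\xi}\in (\h/[\g,\h])^*$ to the $\widetilde{\xi}$-component of the class $[c_{red}]_{\g}\in H^3(\g)\otimes \h/[\g,\h]$. Equivalently, writing $[c_{red}]_{\g}=\sum_j a_j\otimes e_j$ for a basis $e_j$ of $\h/[\g,\h]$ with $a_j\in H^3(\g)$, the image of $\Psi$ is exactly the linear span of the classes $a_j$ (this is the content of Rem. \ref{rem:complred} in the completely reducible case, but holds in general). Hence if $[c_{red}]_{\g}=0$, then every $a_j=0$, so the image of $\Psi$ is the zero subspace of $H^3(\g)$.

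Then the conclusion is immediate: since $[\omega_{3p}]_{\g}\neq 0$ by hypothesis, it cannot lie in the image of $\Psi=\{0\}$. By Proposition \ref{prop:image} this means $[\omega_{3p}]_{CE(L)}\neq 0$, and by Proposition \ref{cor:exact} (equivalently, the first half of Cor. \ref{prop:combined}) no $\h[1]\oplus\g$ moment map can exist. I would write this up as a short paragraph rather than a displayed chain of implications.

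I do not expect any genuine obstacle here; the only point requiring a little care is making the identification ``$\Psi$ vanishes identically iff $[c_{red}]_{\g}=0$'' precise without invoking complete reducibility of $\h$. This follows cleanly from the tensor decomposition $H^3(\g,\h/[\g,\h])\cong H^3(\g)\otimes \h/[\g,\h]$ (valid because $\h/[\g,\h]$ is a trivial representation, as already noted in the footnote to \eqref{eq:Psired}): under this isomorphism $\Psi$ is, up to the canonical pairing, the contraction of $[c_{red}]_{\g}$ with elements of $(\h/[\g,\h])^*$, which is the zero map precisely when $[c_{red}]_{\g}=0$. With that in hand the proof is two lines.
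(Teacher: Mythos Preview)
Your proposal is correct and follows essentially the same route as the paper's proof: the paper also observes that $[c_{red}]_{\g}=0$ is equivalent to $\Psi$ being the zero map (via the description \eqref{eq:Psired}), then invokes Prop.~\ref{prop:image} to get $[\omega_{3p}]_{CE(L)}\neq 0$, and concludes with Prop.~\ref{cor:exact}. Your additional justification of the equivalence ``$\Psi\equiv 0 \Leftrightarrow [c_{red}]_{\g}=0$'' via the tensor decomposition is a bit more explicit than the paper's one-line appeal to \eqref{eq:Psired}, but the argument is the same.
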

\begin{proof}
	{The characterization \eqref{eq:Psired} of $\Psi$  makes clear that $[c_{red}]_{\g}=0$ if{f} $\Psi$ is the zero map. By Prop. \ref{prop:image} we have
$[\omega_{3p}]_{CE(L)}\neq 0$. We conclude using Prop. \ref{cor:exact}.}
\end{proof}

{Notice that the assumption $[c_{red}]_{\g}=0$ is implied by either of the following conditions:
	\begin{itemize}
		\item   The representation $\h$ of $\g$ satisfies $[\g,\h]=\h$, for in that case 
		$\h/[\g,\h]=0$.
		\item The cocycle $c$ satisfies\footnote{In particular, this condition is satisfied when $c=0$, i.e. $\h[1]\oplus \g$ is a graded Lie algebra. In this case, the conclusion of Prop. \ref{prop:cred0} also follows from		Rem. \ref{rem:DGLA} and Prop. \ref{prop:9596}.} $[c]=0$, because the quotient map $\h\to \h/{[\g,\h]}$ is a morphism of representations and the induced map {$H(\g,\h)\to  H(\g,\h/{[\g,\h]})$}	maps $[c]$ to $[c_{red}]$.
			\end{itemize}
} 

\begin{rem}\label{rem:ghnoth}
	{Assuming $[\omega_{3p}]_{\g}\neq 0$, if there exists a $\h[1]\oplus \g$ moment map then the representation must satisfy $[\g,\h]\neq\h$ (this follows from \nbt{the first bullet point above}).}
	{An easy  general fact about Lie algebra  representations   then implies that either $\h$ is not irreducible, or $\h$ is the trivial 1-dimensional representation. }
\end{rem}

A positive result is the following:
\begin{prop}\label{prop:1dim}
	{ Assume $[\omega_{3p}]_{\g}\neq 0$. If  $H^{3}(\g)$ is one-dimensional  and {$[c_{red}]_{\g} \neq 0$}, then 
{$[\omega_{3p}]_{CE(L)}=0$}. Thus if $H^1(M)=0$ then there exists a moment map for $\h[1]\oplus \g$.
	} 
\end{prop}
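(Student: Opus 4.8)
The plan is to combine Prop.~\ref{prop:image} with a dimension count. First I would observe that, under the hypotheses $[\omega_{3p}]_{\g}\neq 0$ and $\dim H^3(\g)=1$, the class $[\omega_{3p}]_{\g}$ spans $H^3(\g)$. Next I would unpack what $[c_{red}]_{\g}\neq 0$ means: writing $[c_{red}]_{\g}\in H^3(\g,\h/[\g,\h])\cong H^3(\g)\otimes \h/[\g,\h]$ as a sum $\sum_i [c_i]_{\g}\otimes e_i$ for a basis $\{e_i\}$ of $\h/[\g,\h]$, the hypothesis says some $[c_i]_{\g}\neq 0$. Since $H^3(\g)$ is one-dimensional and spanned by $[\omega_{3p}]_{\g}$, that nonzero $[c_i]_{\g}$ is a nonzero scalar multiple of $[\omega_{3p}]_{\g}$.

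The key step is then to produce the required $\widetilde\xi\in(\h/[\g,\h])^*$ with $\Psi(\widetilde\xi)=[\omega_{3p}]_{\g}$. Using the characterization \eqref{eq:Psired}, $\Psi(\widetilde\xi)=[\widetilde\xi\circ c_{red}]_{\g}$, which in the above notation equals $\sum_i \widetilde\xi(e_i)[c_i]_{\g}$. Pick an index $i_0$ with $[c_{i_0}]_{\g}=\lambda[\omega_{3p}]_{\g}$, $\lambda\neq 0$, and set $\widetilde\xi(e_{i_0})=1/\lambda$ and $\widetilde\xi(e_j)=0$ for $j\neq i_0$; this gives $\Psi(\widetilde\xi)=[\omega_{3p}]_{\g}$, so $[\omega_{3p}]_{\g}$ lies in the image of $\Psi$. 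By Prop.~\ref{prop:image} this is exactly the statement $[\omega_{3p}]_{CE(L)}=0$. Finally, invoking Cor.~\ref{prop:combined}, when $H^1(M)=0$ the vanishing of $[\omega_{3p}]_{CE(L)}$ is equivalent to the existence of a $\h[1]\oplus\g$ moment map, which completes the argument; alternatively one can construct such a moment map explicitly via Thm.~\ref{cor:explicit} starting from any $\eta\in CE(L)^2$ lifting $\widetilde\xi$ as in Lemma~\ref{prop:stariff}~ii).

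I do not expect any serious obstacle here: the proof is essentially a one-line linear-algebra consequence of the already-established Prop.~\ref{prop:image} once the one-dimensionality of $H^3(\g)$ is used to identify the span. The only point requiring a little care is to keep track of the isomorphism $H^3(\g,\h/[\g,\h])\cong H^3(\g)\otimes\h/[\g,\h]$ (which holds because $\h/[\g,\h]$ is a trivial representation, as noted in the text) and to phrase ``$[c_{red}]_{\g}$ has a nonzero component'' correctly — but this is already spelled out in the discussion following Prop.~\ref{prop:image}, so it can simply be cited.
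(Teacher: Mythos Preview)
Your proposal is correct and follows essentially the same approach as the paper's own proof: the paper argues that $[c_{red}]_{\g}\neq 0$ forces the linear map $\Psi$ of \eqref{eq:Psired} to be nonzero, hence surjective onto the one-dimensional $H^3(\g)$, so $[\omega_{3p}]_{\g}\in\ima\Psi$ and Prop.~\ref{prop:image} plus Thm.~\ref{cor:explicit} conclude. You simply make the surjectivity explicit by writing down a preimage $\widetilde\xi$, which is the same argument spelled out in coordinates.
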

\begin{proof}
	{By the characterization \eqref{eq:Psired}, the map
		$\Psi$ is surjective. Hence the preimage of $[\omega_{3p}]_{\g}$ under $\Psi$ is nonempty. By Prop. \ref{prop:image} we have $[\omega_{3p}]_{CE(L)}=0$.
		We finish using  Thm. \ref{cor:explicit}.}
\end{proof}

\subsection{\nbt{An {alternative} characterization of existence}}\label{subsec:which}
In this subsection \nbt{we give a conceptual answer to the 
existence question for $\h[1]\oplus \g$ moment maps.} 
 
\begin{lemma}\label{lem:surjquot}
 Let $\h[1]\oplus \g$ be a minimal Lie 2-algebra. The following are equivalent:
 \begin{itemize}
\item[a)] there is a surjective\footnote{I.e. the first component is surjective.} $L_{\infty}$-morphism  from  $\h[1]\oplus \g$ to $\RR[1]\oplus_{-\omega_{3p}} \g$ which is $Id_{\g}$ on $\g$, 
\item[b)]  there is a quotient  of $\h[1]\oplus \g$ which is $L_{\infty}$-isomorphic\footnote{An $L_{\infty}$-isomorphism is an $L_{\infty}$-morphism whose first component is an isomorphism.}
 to $\RR[1]\oplus_{-\omega_{3p}} \g$
by a morphism that is $Id_{\g}$ on $\g$. 
\end{itemize}
\end{lemma}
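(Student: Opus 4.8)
The statement is a purely algebraic equivalence between two ways of expressing that $\h[1]\oplus \g$ ``surjects onto'' $\RR[1]\oplus_{-\omega_{3p}}\g$ over $\g$, so the plan is to relate the two via the kernel of an $L_\infty$-morphism. First I would make precise what ``quotient'' means for minimal Lie $2$-algebras: given a surjective (in the first component) $L_\infty$-morphism $f\colon \h[1]\oplus\g \to \RR[1]\oplus_{-\omega_{3p}}\g$ with $f_1|_{\g}=Id_{\g}$, the only freedom in $f_1$ is the linear map $f_1|_{\h}\colon \h\to\RR$, and surjectivity of $f$ just says $f_1|_{\h}\neq 0$ (or $\h=0$ and $\omega_{3p}=0$, a degenerate case I would treat separately or exclude). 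Set $\mathfrak{k}:=\ker(f_1|_{\h})\subseteq\h$. I would show $\mathfrak{k}[1]\oplus\g$ (more precisely $\mathfrak{k}$ with the restricted $\g$-representation and restricted trinary bracket) is a Lie $2$-subalgebra-and-ideal of $\h[1]\oplus\g$, using that $f$ being an $L_\infty$-morphism forces $f_1|_{\h}$ to intertwine the $\g$-actions (from $f_2(\delta h,x)$-type equation, here with $\delta=0$, one gets $f_1|_{\h}([x,h])=[f_1|_\g(x),f_1|_\h(h)]_{\RR}=0$ since $\RR$ is the trivial representation), hence $[\g,\mathfrak{k}]\subseteq\mathfrak{k}$ and in fact $[\g,\h]\subseteq\mathfrak{k}$.

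\textbf{Key steps.} (1) From (a): form the quotient Lie $2$-algebra $Q:=(\h/\mathfrak{k})[1]\oplus\g$ with the induced $\g$-representation on $\h/\mathfrak{k}$ (trivial, since $[\g,\h]\subseteq\mathfrak{k}$) and the induced trinary bracket $\bar c$, and check the quotient projection $q\colon \h[1]\oplus\g\to Q$ is a strict $L_\infty$-morphism that is $Id_\g$ on $\g$. Then $f$ factors as $\bar f\circ q$ where $\bar f\colon Q\to \RR[1]\oplus_{-\omega_{3p}}\g$ has $\bar f_1$ an isomorphism (since $\h/\mathfrak{k}\cong\RR$ via $f_1|_\h$) and is $Id_\g$ on $\g$; the trinary-bracket compatibility of $\bar f$ forces $\bar f_1\circ \bar c$ to differ from $-\omega_{3p}$ by a coboundary $d_\g(\bar f_2)$, which is exactly the condition for $\bar f$ to be an $L_\infty$-isomorphism onto $\RR[1]\oplus_{-\omega_{3p}}\g$ — up to the usual adjustment by $\bar f_2$, i.e. one must verify that any such $\bar f$ is genuinely invertible as an $L_\infty$-morphism (its inverse has first component $\bar f_1^{-1}$). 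This gives (b). (2) From (b): given a quotient $p\colon \h[1]\oplus\g\to Q'$ with $Q'$ $L_\infty$-isomorphic to $\RR[1]\oplus_{-\omega_{3p}}\g$ by $g\colon Q'\to \RR[1]\oplus_{-\omega_{3p}}\g$, both $Id_\g$ on $\g$, simply compose: $g\circ p\colon \h[1]\oplus\g\to\RR[1]\oplus_{-\omega_{3p}}\g$ is an $L_\infty$-morphism, it is $Id_\g$ on $\g$, and its first component is surjective because $p_1$ is surjective and $g_1$ is an isomorphism. This gives (a). The equivalence follows.

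\textbf{Main obstacle.} The only genuinely delicate point is bookkeeping around the $f_2$/$\bar f_2$ components: an $L_\infty$-isomorphism is not determined by its first component alone, so in step (1) I must be careful that the induced $\bar f$ really is an $L_\infty$-\emph{isomorphism} and not merely a morphism whose first component happens to be bijective — but for Lie $2$-algebras with vanishing differential, a morphism whose linear part is an isomorphism is automatically invertible (one inverts degree by degree), so this is routine. A second minor subtlety is the degenerate case $\h=0$: then (a) holds iff $\omega_{3p}=0$ (the identity on $\g=\RR[1]\oplus_{-\omega_{3p}}\g$ is not surjective onto the $\RR$-summand unless that summand is absent), and similarly for (b); I would either handle this explicitly or note that the interesting case is $\h\neq 0$. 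Everything else is a direct translation using the explicit structure equations \eqref{eq:liealg}–\eqref{eq:cocycle} and the description of $L_\infty$-morphisms in Remark \ref{rem:linftymor} and Lemma \ref{lem:solstarlimor}.
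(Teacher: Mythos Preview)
Your proposal is correct and follows essentially the same route as the paper: for $b)\Rightarrow a)$ both you and the paper simply compose the quotient map with the given $L_{\infty}$-isomorphism, and for $a)\Rightarrow b)$ both take $\mathfrak{k}=\ker(f_1|_{\h})$, use the morphism equation $f_1|_{\h}([x,h])=0$ (equation \eqref{firstsys} in the paper) to see that $\mathfrak{k}[1]$ is an $L_{\infty}$-ideal, and then observe that $f$ descends to an $L_{\infty}$-isomorphism from the quotient. Your extra care about invertibility of a Lie $2$-algebra morphism with bijective linear part and about the degenerate case $\h=0$ is fine but not needed for the paper's level of detail.
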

\begin{proof}
  $a) \Leftarrow b)$:  {just compose the morphism from  $\h[1]\oplus \g$ to its quotient with the   $L_{\infty}$-isomorphism \nbt{from the latter} to $\RR[1]\oplus_{-\omega_{3p}} \g$.}
  
$a) \Rightarrow b)$:  let $f$ be a morphism as in a), and denote $\xi:= f_1|_{\h}$. Then $\ker(\xi)[1]$ is a $L_{\infty}$-ideal of $\h[1]\oplus \g$, \nbt{as can be seen from \eqref{firstsys}.}
Further, $f$ descends to an $L_{\infty}$-morphism from the quotient $(\h/\ker\nbt{(\xi)})[1]\oplus \g$ to $\RR[1]\oplus_{-\omega_{3p}} \g$.   The latter reads $Id_{\g}$ on $\g$, and is an $L_{\infty}$-isomorphism because $\xi\neq 0$ due to the surjectivity of $f$.
\end{proof}
 
 \begin{rem}
In the proof of ``$a) \Rightarrow b)$'' above, the quotient $L_{\infty}$-algebra is strictly isomorphic to $\RR[1]\oplus_{\xi \circ c} \g$ by the map $(\xi, id_{\g})$. Under this identification, the $L_{\infty}$-isomorphism given there can be alternatively described as in
  \cite[Corollary A.10]{FRZ} (notice that $[\xi \circ c]_{\g}=-[\omega_{3p}]_{\g}$ as elements of $H^3(\g)$). 
\end{rem}

{
The following statement should be compared with Prop. \ref{prop:image}.
\begin{lemma}\label{lem:op3surj}
  Assume $[\omega_{3p}]_{\g}\neq 0$. Then $[\omega_{3p}]_{CE(L)}=0$ if{f}  
 there exists a  surjective $L_{\infty}$-morphism  as in Lemma \ref{lem:surjquot} a).
  \end{lemma}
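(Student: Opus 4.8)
The plan is to leverage the earlier structural results—Lemma \ref{lem:solstarlimor}, which gives a bijection between solutions $\eta\in CE(L)^2$ of $d_{CE(L)}\eta=\omega_{3p}$ and $L_\infty$-morphisms $f\colon\h[1]\oplus\g\to\RR[1]\oplus_{-\omega_{3p}}\g$ with $f_1|_\g=\mathrm{Id}_\g$—together with the observation (Remark \ref{rem:gh}, Lemma \ref{prop:stariff}) that under this bijection the first component $\eta|_{\h[2]}=f_1|_\h$ is an element of $[\g,\h]^\circ\subset\h^*$. The key point is that surjectivity of $f$ (in the sense of footnote: $f_1$ surjective) is equivalent to $f_1|_\h\colon\h\to\RR$ being nonzero, since $f_1|_\g=\mathrm{Id}_\g$ already forces surjectivity onto the degree-$0$ part. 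So the statement reduces to: $[\omega_{3p}]_{CE(L)}=0$ iff there exists a solution $\eta$ of $d_{CE(L)}\eta=\omega_{3p}$ with $\eta|_{\h[2]}\neq 0$.

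First I would recall from Prop. \ref{prop:primitive} and the discussion in \S\ref{subsec:obstr} that $[\omega_{3p}]_{CE(L)}=0$ means exactly that $d_{CE(L)}\eta=\omega_{3p}$ has some solution $\eta\in CE(L)^2$. So the ``if'' direction is immediate: a surjective morphism as in Lemma \ref{lem:surjquot} a) yields, via Lemma \ref{lem:solstarlimor}, a solution $\eta$, hence $[\omega_{3p}]_{CE(L)}=0$. For the ``only if'' direction I must, given a solution $\eta$, produce one whose $\h[2]$-component is nonzero. This is where the assumption $[\omega_{3p}]_\g\neq 0$ enters decisively. By Lemma \ref{prop:stariff}(i), the component $\eta|_{\h[2]}$ lies in the preimage of $[\omega_{3p}]_\g$ under the map $\Psi\colon[\g,\h]^\circ\to H^3(\g)$, and $\Psi(\eta|_{\h[2]})=[\omega_{3p}]_\g\neq 0$; since $\Psi$ is linear, $\Psi$ of the zero element is $0\neq[\omega_{3p}]_\g$, so necessarily $\eta|_{\h[2]}\neq 0$. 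Thus every solution $\eta$ already has nonzero $\h[2]$-component, and the corresponding $f$ from Lemma \ref{lem:solstarlimor} is automatically surjective.

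Concretely, the proof runs: ($\Leftarrow$) Given a surjective $L_\infty$-morphism $f$ as in Lemma \ref{lem:surjquot} a), apply Lemma \ref{lem:solstarlimor} to get $\eta\in CE(L)^2$ with $d_{CE(L)}\eta=\omega_{3p}$; then $[\omega_{3p}]_{CE(L)}=0$ by definition. ($\Rightarrow$) If $[\omega_{3p}]_{CE(L)}=0$, pick $\eta\in CE(L)^2$ with $d_{CE(L)}\eta=\omega_{3p}$; by Lemma \ref{prop:stariff}(i), $\Psi(\eta|_{\h[2]})=[\omega_{3p}]_\g\neq 0$, so $\eta|_{\h[2]}\neq 0$; by Lemma \ref{lem:solstarlimor} the associated morphism $f$ has $f_1|_\h=\eta|_{\h[2]}\neq 0$ and $f_1|_\g=\mathrm{Id}_\g$, hence $f_1$ is surjective, giving a morphism as in Lemma \ref{lem:surjquot} a).

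I do not expect a serious obstacle here: all the hard work (the bijection in Lemma \ref{lem:solstarlimor}, the compatibility of $\eta|_{\h[2]}$ with $\Psi$ in Lemma \ref{prop:stariff}, and the characterization of $[\omega_{3p}]_{CE(L)}=0$ as solvability of $d_{CE(L)}\eta=\omega_{3p}$) is already in place. The only mild subtlety worth spelling out carefully is the equivalence ``$f_1$ surjective $\iff$ $f_1|_\h\neq 0$'' given that $f_1|_\g=\mathrm{Id}_\g$: surjectivity onto the degree-$0$ component $\g$ of $\RR[1]\oplus_{-\omega_{3p}}\g$ is automatic, while surjectivity onto the degree-$(-1)$ component $\RR[1]$ amounts exactly to $f_1|_\h\colon\h\to\RR$ being nonzero (a nonzero linear functional to $\RR$ is surjective). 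This is the one place where I would be explicit rather than terse.
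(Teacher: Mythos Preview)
Your proof is correct and follows essentially the same route as the paper's own proof: both directions go through Lemma \ref{lem:solstarlimor}, and for the forward direction you use Lemma \ref{prop:stariff}(i) (equivalently, $[d_3(\eta|_{\h[2]})]_\g=[\omega_{3p}]_\g\neq 0$) to force $\eta|_{\h[2]}\neq 0$. Your added remark that surjectivity of $f_1$ reduces to $f_1|_\h\neq 0$ (since $f_1|_\g=\mathrm{Id}_\g$) is exactly the point the paper leaves implicit.
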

  }
\begin{proof}
{Let $\eta=\eta|_{\h[2]}+\eta|_{S^2\g[1]}\in {CE(L)}^2$ satisfy $d_{CE(L)}{\eta}=\omega_{3p}$.
 The element $\xi:= \eta|_{\h[2]}$ is non-zero, because
$[d_3\xi]_{\g}=[\omega_{3p}]_{\g}\neq0$ by Lemma \ref{prop:stariff} i).
Hence the $L_{\infty}$-morphism $f$ from $\h[1]\oplus \g$ to $\RR[1]\oplus_{-\omega_{3p}} \g$  \nbt{that corresponds to $\eta$ by} Lemma \ref{lem:solstarlimor}, which satisfies $(f_1)|_{\g}=Id_{\g}$, has surjective first component.}

For the converse, just apply  Lemma \ref{lem:solstarlimor}.\end{proof}

\nbt{The following proposition gives an {alternative} characterization of the existence of $\h[1]\oplus \g$ moment maps.}

\begin{prop} \label{prop:which2}
Assume $[\omega_{3p}]_{\g}\neq 0$.  If a $\h[1]\oplus \g$   moment map exists, then
  $\h[1]\oplus \g$ has a quotient which is $L_{\infty}$-isomorphic to
$\RR[1]\oplus_{-\omega_{3p}} \g$ {by a morphism that is $Id_{\g}$ on $\g$}. The converse holds if $H^1(M)=0$. 
\end{prop}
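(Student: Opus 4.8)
The plan is to deduce Proposition \ref{prop:which2} by chaining together the results already established in \S\ref{section:exuniq} and \S\ref{subsec:which}. The key observation is that everything has been set up so that the two directions follow almost formally.

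For the forward direction, suppose a $\h[1]\oplus \g$ moment map exists. By Corollary \ref{prop:combined} this forces $[\omega_{3p}]_{CE(L)}=0$. Since we are assuming $[\omega_{3p}]_{\g}\neq 0$, Lemma \ref{lem:op3surj} applies and tells us there exists a surjective $L_{\infty}$-morphism from $\h[1]\oplus \g$ to $\RR[1]\oplus_{-\omega_{3p}} \g$ which is $Id_{\g}$ on $\g$, i.e. condition a) of Lemma \ref{lem:surjquot} holds. By the equivalence $a)\Leftrightarrow b)$ in Lemma \ref{lem:surjquot}, condition b) holds as well, which is precisely the assertion that $\h[1]\oplus \g$ has a quotient $L_{\infty}$-isomorphic to $\RR[1]\oplus_{-\omega_{3p}} \g$ by a morphism that is $Id_{\g}$ on $\g$. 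Note that in this direction we did not need any hypothesis on $M$; the assumption $H^1(M)=0$ is only invoked in the converse.

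For the converse, assume $H^1(M)=0$ and that $\h[1]\oplus \g$ has such a quotient. Running Lemma \ref{lem:surjquot} the other way, $b)\Rightarrow a)$, we obtain a surjective $L_{\infty}$-morphism $\h[1]\oplus \g \to \RR[1]\oplus_{-\omega_{3p}} \g$ that is $Id_{\g}$ on $\g$. Applying the ``if'' direction of Lemma \ref{lem:op3surj} (again using $[\omega_{3p}]_{\g}\neq 0$), this forces $[\omega_{3p}]_{CE(L)}=0$. Now invoke Corollary \ref{prop:combined}: since $H^1(M)=0$, the vanishing of $[\omega_{3p}]_{CE(L)}$ implies the existence of a $\h[1]\oplus \g$ moment map. (Alternatively one could bypass Corollary \ref{prop:combined} and compose the surjection with the moment map $\gamma$ for $\RR[1]\oplus_{-\omega_{3p}} \g$ supplied by Prop. \ref{prop:chrisrg}, but the Corollary already packages this.)

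I do not expect any genuine obstacle here: the proposition is essentially a repackaging of Lemma \ref{lem:op3surj} and Lemma \ref{lem:surjquot} into the language of quotients, with Corollary \ref{prop:combined} bridging between the cohomological condition $[\omega_{3p}]_{CE(L)}=0$ and the existence of a moment map. The only points that require a moment's care are (i) making sure the ``$Id_{\g}$ on $\g$'' condition is carried consistently through both lemmas — which it is, by construction — and (ii) keeping track of where the hypothesis $H^1(M)=0$ enters, namely only in the converse, via Corollary \ref{prop:combined} (equivalently via the construction in Thm. \ref{cor:explicit}).
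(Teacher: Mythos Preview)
Your proposal is correct and follows exactly the approach of the paper: the paper's proof is the single line ``Combine Cor.~\ref{prop:combined}, Lemma~\ref{lem:op3surj} and Lemma~\ref{lem:surjquot},'' and you have simply unpacked that combination in full detail, correctly tracking where the hypothesis $H^1(M)=0$ enters (only in the converse, via Cor.~\ref{prop:combined}).
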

\begin{proof}
{Combine  Cor. \ref{prop:combined}, Lemma \ref{lem:op3surj} and Lemma \ref{lem:surjquot}.}
\end{proof}

\begin{ex}
\nbt{Suppose that $\g$ is} the Lie algebra of a simple compact Lie group. The \emph{string Lie 2-algebra} is defined as  $\RR[1]\oplus_{Car}\g$ where $Car$ is the Cartan 3-cocycle on $\g$ (see Ex. \ref{ex:comconn}), which is known to be a generator of the 1-dimensional vector space $H^3(\g)$.
Assume that ${[\omega_{3p}]_{\g}}\neq 0$. Then {$[{\omega_{3p}}]_{\g}$} and $Car$ are multiples of each other, so  $\nbt{\RR[1]\oplus_{-\omega_{3p}} \g}$ is \nbt{$L_{\infty}$-isomorphic} to the string Lie 2-algebra, \nbt{as one can see using \cite[Cor. A.10]{FRZ}}.
By Prop. \ref{prop:which2}, \nbt{if a   $\h[1]\oplus \g$ moment map exists, then necessarily $\h[1]\oplus \g$ has} the string Lie 2-algebra as a quotient. (The converse holds if $H^1(M)=0$.) 
\end{ex}

\section{Examples}\label{sec:ex}

We now  present instances in which moment maps for Lie 2-algebras exist, 
\nbt{using the explicit criteria developed in \S\ref{subsec:h3g}.}  
 
{
In this section $\g$ is always a Lie algebra, $\h$ a $\g$-representation, and $c$ 
 a 3-cocycle for this representation. \nbt{(We remind that this triple of data is equivalent to a minimal Lie 2-algebra structure on $\h[1]\oplus \g$, see \S\ref{sec:momap2}.)}
Further $(M,\omega)$ is a 2-plectic   manifold, which we assume to satisfy $$H^1(M)=0,$$ and
 $\g\to \mathfrak{X}(M),x\mapsto v_{x}$ a Lie algebra morphism  taking values in Hamiltonian vector fields. 
 
\nbt{Recall that these data give rise to a reduced Lie 2-algebra as in \S\ref{subsec:h3g}, corresponding to a   triple $(\g,\h/{[\g,\h]},c_{red})$. Further it gives rise to 
 a 3-cocyle $\omega_{3p}$ for $\g$ as in \eqref{eq:omega3p} (upon an immaterial choice of  point $p\in M$).}
  
 \begin{rem}
  If $dim (\g)\leq 2$, then there exists a $\g$ moment map  \nbt{by Prop. \ref{prop:9596} (since $\wedge^3 \g=\{0\}$), and thus also a $\h\oplus\g$ moment map by Cor. \ref{cor:gtohg}}. 
  Therefore, when looking for Lie 2-algebra moment maps \nbt{that do not arise from Lie algebra moment maps}, we should look at Lie algebras with $dim(\g)\geq3$.   
\end{rem}

\subsection{Abelian Lie algebras} 
\begin{lemma}
Suppose the Lie algebra  $\g$ is abelian.	
\nbt{Then} $[\omega_{3p}]_{CE(L)}=0$ if and only if $\omega_{3p}|_{U}=0$, where $U=\{u\in\wedge^{3}\g:\hspace{0.2cm}c(u)\in[\g,\h]\}.$
\end{lemma}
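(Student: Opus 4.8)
The plan is to deduce this from Prop.~\ref{prop:image} together with a short piece of linear algebra. First I would observe that, since $\g$ is abelian, its Chevalley--Eilenberg differential $d_{\g}$ vanishes; hence $H^3(\g)=\wedge^3\g^*$, the class $[\omega_{3p}]_{\g}$ is simply $\omega_{3p}\in\wedge^3\g^*$, and the map $\Psi$ of \eqref{eq:Psi} becomes
\[
\Psi\colon [\g,\h]^{\circ}\longrightarrow \wedge^3\g^*,\qquad \xi\longmapsto \xi\circ c .
\]
By Prop.~\ref{prop:image}, $[\omega_{3p}]_{CE(L)}=0$ if and only if $\omega_{3p}\in\ima(\Psi)$. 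So the whole statement reduces to proving that $\ima(\Psi)=U^{\circ}$, the annihilator in $\wedge^3\g^*$ of the subspace $U=c^{-1}([\g,\h])\subset\wedge^3\g$; indeed ``$\omega_{3p}\in U^{\circ}$'' is literally ``$\omega_{3p}|_U=0$''. In fact $\ima(\Psi)=U^{\circ}$ is an instance of the standard identity $c^{*}\big([\g,\h]^{\circ}\big)=\big(c^{-1}([\g,\h])\big)^{\circ}$ for the transpose $c^{*}\colon \h^*\to\wedge^3\g^*$ of $c$.

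Next I would prove the two inclusions explicitly. The inclusion $\ima(\Psi)\subseteq U^{\circ}$ is immediate: for $\xi\in[\g,\h]^{\circ}$ and $u\in U$ we have $c(u)\in[\g,\h]$, hence $(\xi\circ c)(u)=\xi(c(u))=0$. For $U^{\circ}\subseteq\ima(\Psi)$, take $\beta\in\wedge^3\g^*$ with $\beta|_U=0$. Since $\ker c\subseteq U$, $\beta$ factors as $\beta=\bar\beta\circ c$ for a unique $\bar\beta\colon W\to\RR$, where $W:=\ima(c)\subseteq\h$. Under the isomorphism $\wedge^3\g/\ker c\cong W$ induced by $c$, the image of $U$ is exactly $W\cap[\g,\h]$ (using $c(c^{-1}(S))=\ima c\cap S$), so $\beta|_U=0$ says precisely that $\bar\beta$ vanishes on $W\cap[\g,\h]$. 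Therefore $\bar\beta$ on $W$ and the zero functional on $[\g,\h]$ agree on $W\cap[\g,\h]$, hence glue to a well-defined functional on $W+[\g,\h]$, which I extend arbitrarily to some $\xi\in\h^*$. By construction $\xi\in[\g,\h]^{\circ}$ and $\xi|_W=\bar\beta$, so $\Psi(\xi)=\xi\circ c=\bar\beta\circ c=\beta$, giving $\beta\in\ima(\Psi)$.

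Combining the two steps, $[\omega_{3p}]_{CE(L)}=0 \iff \omega_{3p}\in\ima(\Psi)=U^{\circ} \iff \omega_{3p}|_U=0$, which is the claim. The only mildly delicate point is the gluing/extension of functionals in the second paragraph: one must verify the consistency of $\bar\beta$ and $0$ on $W\cap[\g,\h]$, and this is exactly what $\beta|_U=0$ provides. Everything else is formal, so I do not anticipate a genuine obstacle here.
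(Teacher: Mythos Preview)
Your proposal is correct and follows essentially the same route as the paper: both reduce via Prop.~\ref{prop:image} and the vanishing of $d_{\g}$ to the question of whether $\omega_{3p}=\xi\circ c$ for some $\xi\in[\g,\h]^{\circ}$, and then carry out the same linear-algebra construction of $\xi$ by first defining it on $\ima(c)$ and then extending. Your converse is in fact argued a bit more carefully than the paper's: you glue $\bar\beta$ on $W=\ima(c)$ with $0$ on $[\g,\h]$ before extending to $\h$, whereas the paper extends by zero on ``any'' complement $V$ of $\ima(c)$---which, strictly speaking, only yields $\xi\in[\g,\h]^{\circ}$ if $V$ is chosen to contain a complement of $\ima(c)\cap[\g,\h]$ inside $[\g,\h]$.
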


\begin{rem}\label{rem:reduced} In terms of the reduced Lie 2-algebra, the condition \nbt{$\omega_{3p}|_{U}=0$} becomes the following inclusion of kernels of linear maps:  $\ker(c_{red}\colon \wedge^3\g\to \h_{red})\subset \ker(\omega_{3p}\colon \wedge^3\g\to \RR)$. 
\end{rem}

\begin{proof}
Using Prop. \ref{prop:image} and the fact that $\g$ is abelian, we deduce that $[\omega_{3p}]_{CE(L)}=0$ iff there exists \nbt{$\xi\in [\g,\h]^{\circ}$} making this diagram commute:
		
		\[\xymatrix{   & \h \ar[d]^{\xi}
			\\
			\wedge^3 \g \ar[r]_{\omega_{3p}}\ar[ru]^{c} &\RR}
		\]
		
	Assume there is such an $\xi$. Then for $u\in \wedge^{3}\g$ such that $c(u)\in[\g,\h]$ we must have $\omega_{3p}(u)=\xi(c(u))=0$.
	
	Conversely, if  $\omega_{3p}|_{U}=0$ then we can define \nbt{$\xi|_{im(c)}$ by} $\xi|_{im(c)}(a):=\omega_{3p}(c^{-1}(a))$ \nbt{for all $a\in im(c)\subset \h$}, where $c^{-1}(a)$ is any element in the preimage of $a$ under $c$. Such $\xi|_{im(c)}$ is well-defined, because $\omega_{3p}|_{U}=0$ {implies} that $ker(c)\subset ker(\omega_{3p})$.
	By defining $\xi|_{V}=0$ on any $V$ such that $im(c)\oplus V=\h$ and extending linearly to the rest of $\h$, we obtain the desired $\xi$.
\end{proof}
Using Cor. \ref{prop:combined} we obtain:
\begin{cor}
Suppose $\g$ is an abelian Lie algebra. Then there is a $\h[1]\oplus\g$ moment map  if and only if $\omega_{3p}|_{U}=0$, where $U=\{u\in\wedge^{3}\g:\hspace{0.2cm}c(u)\in[\g,\h]\}.$
\end{cor}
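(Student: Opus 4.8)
The plan is to derive this corollary directly from Proposition \ref{prop:combined} (the combined existence result) together with the preceding lemma characterizing the vanishing of $[\omega_{3p}]_{CE(L)}$ for abelian $\g$. Since we are working under the standing hypothesis $H^1(M)=0$, Cor. \ref{prop:combined} tells us that a $\h[1]\oplus\g$ moment map exists if and only if $[\omega_{3p}]_{CE(L)}=0$. The immediately preceding lemma already translated this condition, in the abelian case, into the statement $\omega_{3p}|_{U}=0$ with $U=\{u\in\wedge^{3}\g:\ c(u)\in[\g,\h]\}$. So the corollary is essentially a one-line chaining of these two equivalences.

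Concretely, I would first invoke Cor. \ref{prop:combined} to replace ``there exists a $\h[1]\oplus\g$ moment map'' by ``$[\omega_{3p}]_{CE(L)}=0$'', using that $H^1(M)=0$ is in force throughout this section. Then I would invoke the abelian lemma just proved to replace ``$[\omega_{3p}]_{CE(L)}=0$'' by ``$\omega_{3p}|_{U}=0$''. Composing the two ``if and only if'' statements yields exactly the claim. No new computation is needed; the work has all been done in the lemma (the construction of $\xi$ on $\mathrm{im}(c)$ and its extension) and in the general machinery of \S\ref{section:exuniq}.

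There is essentially no obstacle here: the corollary is a formal consequence, and the only thing to be slightly careful about is making sure the hypotheses line up — namely that $H^1(M)=0$ is indeed assumed (it is, at the top of \S\ref{sec:ex}) so that the converse direction of Cor. \ref{prop:combined} applies, and that $\g$ being abelian is the hypothesis under which the preceding lemma was stated. One could optionally remark, as in Rem. \ref{rem:reduced}, that in terms of the reduced Lie 2-algebra this is the kernel inclusion $\ker(c_{red})\subset\ker(\omega_{3p})$, but that is not needed for the proof itself.

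\begin{proof}
Since $H^1(M)=0$ by assumption, Cor. \ref{prop:combined} shows that a moment map for $\h[1]\oplus\g$ exists if and only if $[\omega_{3p}]_{CE(L)}=0$. As $\g$ is abelian, the previous lemma shows that $[\omega_{3p}]_{CE(L)}=0$ if and only if $\omega_{3p}|_{U}=0$, where $U=\{u\in\wedge^{3}\g:\ c(u)\in[\g,\h]\}$. Combining the two equivalences yields the statement.
\end{proof}
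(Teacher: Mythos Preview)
Your proposal is correct and takes essentially the same approach as the paper: the paper simply writes ``Using Cor.~\ref{prop:combined} we obtain'' before stating the corollary, which is exactly the chaining of Cor.~\ref{prop:combined} with the preceding lemma that you spell out. Your version is just a slightly more explicit rendering of the same one-line argument.
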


\begin{ex}\label{ex:abr3}
Consider $(M,\omega)=(\RR^{3},dx\wedge dy\wedge dz)$. Let the abelian Lie algebra $\g=\RR^{3}$ act on $M=\RR^{3}$ by translations. This action preserves $\omega$, therefore, the action is generated by symplectic, hence Hamiltonian, vector fields. Using Remark \ref{rem:reduced} and noticing that $dim(\wedge^3\g)=1$, we can see:
\textit{a moment map for $\h[1]\oplus \g$ exists if and only if $c_{red}\neq 0$}.
Here are some concrete simple cases illustrating this result:

\begin{itemize}
\item Let $\h$ be any representation of $\g=\RR^3$, and $c=0$ the zero cocycle. Since 
$c_{red}=0$ in this case, we have no moment map for $(\h[1]\oplus \g,[\;,\;]_{\g},c\equiv0)$.
On the other hand, if $\h$ is a trivial representation and $c$ is any non-zero cocycle, then   $c_{red}\neq 0$, and we have a moment map for $\h[1]\oplus \g$.
\item For representations $\h$ of $\g=\RR^{3}$ such that $[\g,\h]=\h$, there is no moment map  for $\h[1]\oplus \g$, because $c_{red}=0$. This is the case, for example, for the representation
\[\hspace{0.5cm}\g\to {End(\h)}, e_{i} \mapsto \lambda_{i}Id\]
{where the $e_i$ are basis elements of $\g$, and at least one of the $\lambda_{i}\in \RR$ is non-zero.}
\end{itemize}
\end{ex}

\subsection{Other examples}

\nft{We present two examples in which the Lie algebra $\g$ is not abelian. In both of them, just as in Ex. \ref{ex:abr3},  the action is given by left translation on a Lie group with vanishing first and second cohomology.}	

\begin{ex}[Connected compact simple Lie groups]\label{ex:comconn}
As in \cite[Ex. 9.12]{FRZ},
let $G$ be a connected compact simple Lie group, acting on itself by left multiplication. Recall that $H^{1}(G)=H^{2}(G)=0$ (see, e.g., \cite{bredon2013topology}). The Lie algebra $\g$ of such a group is equipped with a skew non-degenerate trilinear form \[\theta(x,y,z):=\langle x,[y,z] \rangle\] called \emph{Cartan 3-cocycle}, where $\langle\;,\rangle$ is the Killing form.
Let $\omega$ be the  {left-invariant} 2-plectic form  on $G$ which equals $\theta$ at the identity element $e$.
\nbt{The action is Hamiltonian, and  $[\omega_{3e}]_{\g}=[\theta]_{\g}\neq 0$} in $H^{3}(\g)\cong\RR$. 

Thus, if $\h$ is any representation of $\g$ {and} $c$ a 3-cocycle for this representation,  Prop. \ref{prop:cred0} and Prop. \ref{prop:1dim} imply: 
\begin{center}
\textit{There exists a moment map for $\h[1]\oplus\g$ iff $[c_{red}]_{\g}\neq 0$}. 
\end{center}
{Notice that 
Rem. \ref{rem:complred} applies, and that using the notation introduced there, the condition $[c_{red}]_{\g}\neq 0$ can be expressed as follows: $[c_i]\neq 0 \in H^{3}(\g)$ for some $i\in \{1,\dots,k\}$.}
 \end{ex} 
 
\begin{ex}[The Heisenberg Lie algebra]
Let $\g$ be the Lie algebra of the Heisenberg group $G$, i.e.,
 \begin{equation*}
\g=\Bigg\{\begin{pmatrix}
0 & a & b\\0 & 0 & c\\ 0 & 0 & 0
\end{pmatrix}: a,b,c \in \RR \Bigg\}
\end{equation*}
\nbt{Below we will need the following claim:}
\noindent\emph{{There is a canonical isomorphism of 1-dimensional vector spaces ${\wedge}^{3}{\g}^{\ast}\cong H^{3}(\g)$}.
}

As a smooth manifold, $G\cong \RR^{3}$, hence $H^{1}(G)=0$.
Let $G$ act on itself by left multiplication, and let $\omega$ be a left-invariant volume form: thus, the generators of left translations are multi-symplectic and, since $H^{2}(G)=0$, Hamiltonian vector fields. 

Consider the representation of $G$ on $\h:=\RR^{3}$ by matrix multiplication, \nbt{and let $c$ be any 3-cocycle for this representation}. Clearly, $[\g,\h]\subsetneqq\h$,
and the quotient $\h/[\g,\h]$ is isomorphic to $\RR$. 
We have $[\omega_{3p}]_{\g}\neq 0$ for any point $p$, since   $\omega$
is a volume form and by the above claim.
 Since $H^{3}(\g)$ is 1-dimensional and $[\omega_{3p}]_{\g}\neq 0$,  {Prop. \ref{prop:cred0}, Prop. \ref{prop:1dim} and the above claim imply: 
 \begin{center}
 \textit{There exists a moment map for $\h[1]\oplus\g$ iff $c_{red}\neq 0$}.
 \end{center}
}

{To conclude, we prove the above claim. 
 Any $c\in {\wedge}^{3}{\g}^{\ast}$  is closed by dimension reasons, {yielding the surjective map
 ${\wedge}^{3}{\g}^{\ast}\to H^{3}(\g), c\mapsto [c]_{\g}$. This map is injective:}
for all $\xi\in {\wedge}^{2}{\g}^{\ast}$ we have $d_{\g}\xi=0$: using a  basis  $X,Y,Z$ of $\g$ {satisfying the bracket relations   $[X,Y]=Z,[X,Z]=[Y,Z]=0$}, we have
$$
	(\nrt{d_{\g}}\xi)(X,Y,Z)=-\xi([X,Y],Z)+\xi([X,Z],Y)-\xi([Y,Z],X)=0.
$$
	 }
\end{ex}

\appendix
\section{{Lie 2-algebra moment maps for arbitrary Lie algebra actions}}

Given an infinitesimal action of a Lie algebra $\g$ on a 2-plectic manifold, there exists a Lie 2-algebra equipped with a moment map, which we write down explicitly. However the degree zero component of this Lie 2-algebra is not $\g$ in general (it is only when $H^1(M)=0$). This is the reason why we put this statement -- which is of independent interest but is not used in the body of the paper -- in the appendix. 
 
This proposition extends our Prop. \ref{prop:chrisrg} (i.e., the $n=2$ case of  \cite[Prop. 9.10]{FRZ}) by removing the assumption that $H^1(M)=0$.
\begin{prop}\label{prop:gH}
Let $\g$ be a  Lie  algebra, let $\omega$ be a 2-plectic form on a manifold $M$, and let $\g\to \mathfrak{X}(M), x\mapsto v_{x}$ be a Lie algebra morphism taking values in Hamiltonian vector fields. 
\begin{itemize}
\item[i)] These data determine a Lie algebra structure on $\g\oplus H^1(M)$, which is a central extension of $\g$ and which is  canonical up to isomorphism.
\item[ii)]
There exists a moment map for the Lie 2-algebra $$\RR[1]\oplus_{-\omega_{3p}} (\g\oplus H^1(M)).$$ This Lie 2-algebra is defined precisely as in \S\ref{subsec:imme}, regarding  $-\omega_{3p}$   as a 3-cocycle for $\g\oplus H^1(M)$.
\end{itemize}
\end{prop}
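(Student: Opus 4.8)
The plan is to reduce the general case to the known case $H^1(M)=0$ by building a suitable central extension of $\g$, on which the relevant 3-cocycle becomes a legitimate Chevalley-Eilenberg cocycle, and then to invoke Prop.~\ref{prop:chrisrg}. For part i), I would fix a linear map $\gamma_1\colon\g\to\Omega^1_{\mathrm{Ham}}(M)$ with $d_{dR}\gamma_1(x)=-i_{v_x}\omega$ (this exists pointwise in $\g$ since the $v_x$ are Hamiltonian), and observe that for $x,y\in\g$ the $1$-form $\gamma_1([x,y])-[\gamma_1(x),\gamma_1(y)]'$ is closed; here $[\;,\;]'$ is the binary bracket of $L_\infty(M,\omega)$, so this $1$-form is $\gamma_1([x,y])-\iota_{v_x\wedge v_y}\omega$ up to sign, and its de Rham class defines a map $\tau\colon\wedge^2\g\to H^1(M)$. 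The first thing to check is that $\tau$ is a $2$-cocycle on $\g$ with values in the trivial module $H^1(M)$ — this is the standard computation that the failure of a set-theoretic section of an extension to be a homomorphism is a cocycle, using the Jacobi identity for $\g$ and for $L_\infty(M,\omega)$ and the fact that $H^1(M)$ carries the trivial action. Changing the choice of $\gamma_1$ by a linear map $\g\to\Omega^1_{\text{closed}}(M)$ changes $\tau$ by a coboundary, so $[\tau]\in H^2(\g,H^1(M))$ is canonical, and $\g\oplus H^1(M)$ with bracket $[(x,a),(y,b)]:=([x,y],\tau(x,y))$ is the associated central extension, canonical up to isomorphism.

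For part ii), the key observation is that the $\g$-action on $M$ pulls back along the projection $\mathfrak{g}\mathfrak{h}:=\g\oplus H^1(M)\twoheadrightarrow\g$ to an action of $\mathfrak{g}\mathfrak{h}$ on $M$ by the \emph{same} Hamiltonian vector fields (the center acts trivially). So $-\omega_{3p}$, viewed via this projection as an element of $\wedge^3\mathfrak{g}\mathfrak{h}^*$, is the multisymplectic $3$-cocycle attached to the $\mathfrak{g}\mathfrak{h}$-action, and by construction $\RR[1]\oplus_{-\omega_{3p}}\mathfrak{g}\mathfrak{h}$ is exactly the Lie $2$-algebra built in \S\ref{subsec:imme} for this action. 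If I can show that $[\omega_{3p}]_{\mathfrak{g}\mathfrak{h}}=0$ in $H^3(\mathfrak{g}\mathfrak{h})$, then I can simply invoke Prop.~\ref{prop:chrisrg} — wait, that invocation still carries the hypothesis $H^1(M)=0$, so instead I should directly exhibit the primitive: the point of enlarging $\g$ to $\mathfrak{g}\mathfrak{h}$ is precisely that the class $[\omega_{3p}]_{\mathfrak{g}\mathfrak{h}}$ vanishes, because the obstruction that $\gamma_1([x,y])-\iota_{v_x\wedge v_y}\omega$ need not be exact on $M$ is now absorbed into the extra summand $H^1(M)$. Concretely, I would extend $\gamma_1$ to $\mathfrak{g}\mathfrak{h}$ by sending $(x,a)\mapsto \gamma_1(x)+(\text{a closed }1\text{-form representing }a)$, and then the element $\gamma_1([(x,a),(y,b)])-\iota_{v_x\wedge v_y}\omega$ becomes exact on $M$ by the very definition of $\tau$; hence one can solve $d_{dR}\gamma_2((x,a),(y,b))=\gamma_1([(x,a),(y,b)])-\iota_{v_x\wedge v_y}\omega$ for $\gamma_2$, normalized to vanish at $p$, exactly as in the construction below Prop.~\ref{prop:chrisrg}. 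This produces the desired moment map $\gamma=(\gamma_1,\gamma_2)$ for $\RR[1]\oplus_{-\omega_{3p}}\mathfrak{g}\mathfrak{h}$, after checking the remaining $L_\infty$-morphism equations of Rem.~\ref{rem:linftymor}, which reduce to the defining equation for $\gamma_2$ and the trivial equations involving the $\RR$-summand.

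I expect the main obstacle to be the bookkeeping in verifying that $\tau$ is well-defined and a cocycle, and in particular pinning down signs so that the extension really does what it should: one must be careful that $[\gamma_1(x),\gamma_1(y)]'=\iota_{v_x\wedge v_y}\omega=\omega_2(x,y)$ (up to the sign $\zeta(2)$) as a $1$-form on $M$, and that the de Rham class of $\gamma_1([x,y])-[\gamma_1(x),\gamma_1(y)]'$ is genuinely independent of all choices. A secondary, more conceptual point is to argue cleanly that the obstruction class $[\omega_{3p}]_{\mathfrak{g}\mathfrak{h}}$ vanishes — rather than doing this abstractly, the cleanest route (and the one I would take) is the constructive one above: directly write down $(\gamma_1,\gamma_2)$ on $\mathfrak{g}\mathfrak{h}$ and verify it satisfies the moment map equations, so that vanishing of the class is a corollary rather than an input. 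The rest — canonicity up to isomorphism of the extension, and identifying the Lie $2$-algebra with the one from \S\ref{subsec:imme} — is routine.
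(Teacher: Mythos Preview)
Your approach is essentially the paper's: both construct the 2-cocycle $\tau$ (the paper calls it $B$) from a choice of Hamiltonian 1-forms, build the central extension, extend $\gamma_1$ to $\g\oplus H^1(M)$ via a linear section $\psi\colon H^1(M)\to\Omega^1_{\mathrm{closed}}(M)$, and then solve for $\gamma_2$ exactly as below Prop.~\ref{prop:chrisrg}.

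Two points need correction. First, the sign in your extension of $\gamma_1$ should be $(x,a)\mapsto\gamma_1(x)-\psi(a)$: with your plus sign, $\gamma_1([(x,a),(y,b)])-\iota(v_x\wedge v_y)\omega=\gamma_1([x,y])+\psi(\tau(x,y))-\iota(v_x\wedge v_y)\omega$ has de Rham class $2\tau(x,y)$, not zero, so $\gamma_2$ cannot be defined. Second, the final $L_\infty$-morphism equation (the one involving the trinary bracket $-\omega_{3p}$) is not one of the ``trivial equations involving the $\RR$-summand'': it reads $\gamma_2([x_1,x_2],x_3)+c.p.=\omega_{3p}(x_1,x_2,x_3)-\iota(v_{x_1}\wedge v_{x_2}\wedge v_{x_3})\omega$, and verifying it requires the identity $\iota(v_{[x_1,x_2]}\wedge v_{x_3})\omega+c.p.=d\big(\iota(v_{x_1}\wedge v_{x_2}\wedge v_{x_3})\omega\big)$ (the same identity that makes $\tau$ a cocycle) together with the normalization $\gamma_2|_p=0$. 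The paper carries out this check by introducing the auxiliary function $C(x_1,x_2,x_3)$ and showing it is constant and vanishes at $p$.
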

Notice that the degree zero component of the above Lie 2-algebra is $\g$ only when $H^1(M)=0$.
Hence, when $H^1(M)\neq 0$, Prop. \ref{prop:gH} does not address the question of existence of a moment map for a Lie 2-algebra which has $\g$ in  degree zero.
\begin{proof}
i) Denote by $\alpha\colon \g\to {\Omega}_\textrm{Ham}^{1}(M)$  any linear map such that $\iota_{v_x}\omega=-d\alpha_x$ for all $x\in \g$. One checks easily that $\alpha_{[x,y]}-\iota(v_x\wedge v_y)\omega$ is closed for all $x,y\in \g$.
We define $$B\colon \wedge^2\g\to H^1(M),\;\; B(x,y)= \left[\alpha_{[x,y]}-\iota(v_x\wedge v_y)\omega\right]_{H^1(M)},$$
where $[\,\cdot\,]_{H^1(M)}$ denotes the de Rham cohomology class. It turns out that $B$ is a Lie algebra 2-cocycle for $\g$ with values in the trivial representation $H^1(M)$: to see this, use the Jacobi identity, and the fact that for all
$x_1,x_2,x_3\in \g$ 
(see for instance \cite[Lemma 9.2]{FRZ})
\begin{equation}\label{eq:9.2}
\iota(v_{[x_1,x_2]}\wedge v_{x_3}) \omega-\iota(v_{[x_1,x_3]}\wedge v_{x_2}) \omega +\iota(v_{[x_2,x_3]}\wedge v_{x_1}) \omega=d(\iota(v_{x_1}\wedge v_{x_2}\wedge v_{x_3})\omega).
\end{equation}
 Notice that while $B$ depends on the choice of $\alpha$, its class  in the Chavalley-Eilenberg cohomology of $\g$ with values on $H^1(M)$ does not: if $\alpha'$ is another linear map as above, we have $B_{\alpha}-B_{\alpha'}=-d_{\g}[\alpha-\alpha']_{H^1(M)}$.

We consider the central extension of $\g$ by the 2-cocycle $B$, i.e. the Lie algebra $\g\oplus H^1(M)$ with bracket $$[(x_1,A_1),(x_2,A_2)]=([x_1,x_2],B(x_1,x_2)).$$ As a consequence of the above comment, the isomorphism class of the extension does not depend on the choice of $\alpha$.

ii) This part of the proof extends the one of \cite[Prop. 9.10]{FRZ}. Fix a linear map $\psi\colon H^1(M)\to \Omega^1_{closed}(M)$ which assigns a representative to each cohomology class. 
 We claim that the following are the components of an $L_{\infty}$-morphism from $\RR[1]\oplus_{-\omega_{3p}} (\g\oplus H^1(M))$ to $L_{\infty}(M,\omega)$:
\begin{align*}
F_1|_{\g\oplus H^1(M)}&:\g\oplus H^1(M)\to {\Omega}_\textrm{Ham}^{1}(M),\;\; x+A\mapsto \alpha_x-\psi(A)\\
F_1|_{\RR[1]}&:\RR\to C^{\infty}(M), \;\;r\mapsto r \\
F_2&:{\wedge}^2(\g\oplus H^1(M))\to C^{\infty}(M).
\end{align*}
Here we define $F_2(x_1+A_1,x_2+A_2)$ to be the only function vanishing at $p$ which is a primitive of the exact 1-form 
$$\alpha_{[x_1,x_2]}-\iota(v_{x_1}\wedge v_{x_2})\omega-\psi(B(x_1,x_2)).$$
Notice that   $F_2(x_1+A_1,x_2+A_2)$ has no dependence on $A_1$ or $A_2$,
and that the hamiltonian vector field of $F_1(x+A)$ is $v_x$.
The $L_{\infty}$-morphism relations read as follows (see for example \cite[Prop. A.9]{FRZ}), for all $r\in \RR, x_i\in \g, A_i\in H^1(M)$:
\begin{align}
\label{eq:H1}&d(F_1(r))=0\\
\label{eq:H2}&F_1[x_1, x_2]=d(F_2(x_1,x_2))+\iota(v_{x_1}\wedge v_{x_2})\omega\\
\label{eq:H3}&F_2([x_1, x_2],x_3)+c.p.=\omega_{3p}(x_1,x_2,x_3)-\iota(v_{x_1}\wedge v_{x_2}\wedge v_{x_3})\omega,
\end{align}
where $\omega_{3p}$ was defined in eq. \eqref{eq:omega3p}.
Eq. \eqref{eq:H1} is trivially satisfied, and eq. \eqref{eq:H2} is satisfied due to the way $F_2$ was defined. To tackle eq. \eqref{eq:H3}, for all $x_1,x_2,x_3\in \g$, define the following function on $M$:
$$C(x_1,x_2,x_3)=F_2(x_1,[x_2,x_3])+c.p.+(\iota(v_{x_1}\wedge v_{x_2}\wedge v_{x_3})\omega)|_p-\iota(v_{x_1}\wedge v_{x_2}\wedge v_{x_3})\omega.$$
To shorten the notation, let $X$ denote  the l.h.s. of eq. \eqref{eq:9.2}. Using that equation, one computes that the differential of the function $F_2(x_1,[x_2,x_3])+c.p.$ is $X-\psi[X]_{H^1(M)}=X$ and that therefore $C(x_1,x_2,x_3)$ is a constant function. Since $C(x_1,x_2,x_3)$ vanishes at $p$, it must vanish identically, showing that eq. \eqref{eq:H3} holds too.
\end{proof}

\begin{rem} As the referee    pointed out to us, in the set-up of Prop. \ref{prop:gH}, 
\cite[\S3.4]{FiorenzaRogersSchrLocObs} constructs a Lie 2-algebra $\mathfrak{h}\mathfrak{e}\mathfrak{i}\mathfrak{s}_{\rho}(\g)$ admitting a moment map, using methods from homotopy theory. (We already recalled this fact in Rem. \ref{rem:heis}.)
 We obtained the statement of Prop. \ref{prop:gH} ii) trying to describe  $\mathfrak{h}\mathfrak{e}\mathfrak{i}\mathfrak{s}_{\rho}(\g)$ explicitly. To this aim, 
 we used  the fact that the cochain complex $C^{\infty}(M)\to \Omega^1(M)\to d\Omega^1(M)$ (concentrated in degrees $-2,-1,0$) is quasi-isomorphic to
its cohomology $\RR[2]\oplus H^1(M)[1]$.
\end{rem}

\bibliographystyle{habbrv} 

\begin{thebibliography}{10}

\bibitem{baez2004higher}
J.~C. Baez and A.~S. Crans.
\newblock {Higher-dimensional algebra VI: Lie 2-algebras}.
\newblock {\em Theory Appl. Categ}, 12(15):492--528, 2004.

\bibitem{bredon2013topology}
G.~E. Bredon.
\newblock {\em Topology and geometry}, volume 139.
\newblock Springer Science \& Business Media, 2013.

\bibitem{FRZ}
M.~Callies, Y.~Fregier, C.~L. Rogers, and M.~Zambon.
\newblock Homotopy moment maps.
\newblock {\em {Advances in Mathematics}}, 303:954--1043, 2016.

\bibitem{doubek2007deformation}
M.~Doubek, M.~Markl, and P.~Zima.
\newblock Deformation theory (lecture notes).
\newblock {\em Archivum mathematicum}, 43(5):333--371, 2007.

\bibitem{FiorenzaRogersSchrLocObs}
D.~Fiorenza, C.~L. Rogers, and U.~Schreiber.
\newblock {$L_\infty$}-algebras of local observables from higher prequantum
  bundles.
\newblock {\em Homology Homotopy Appl.}, 16(2):107--142, 2014.

\bibitem{FLRZ}
Y.~Fr{{\'e}}gier, C.~Laurent-Gengoux, and M.~Zambon.
\newblock A cohomological framework for homotopy moment maps.
\newblock {\em J. Geom. Phys.}, 97:119--132, 2015.

\bibitem{Hatcher}
A.~Hatcher.
\newblock {\em Algebraic topology}.
\newblock Cambridge University Press, Cambridge, 2002.

\bibitem{lada1993introduction}
T.~Lada and J.~Stasheff.
\newblock {Introduction to SH Lie algebras for physicists}.
\newblock {\em International Journal of Theoretical Physics}, 32(7):1087--1103,
  1993.

\bibitem{RogersL}
C.~L. Rogers.
\newblock {$L_\infty$}-algebras from multisymplectic geometry.
\newblock {\em Lett. Math. Phys.}, 100(1):29--50, 2012.

\bibitem{RogersPre}
C.~L. Rogers.
\newblock 2-plectic geometry, {C}ourant algebroids, and categorified
  prequantization.
\newblock {\em J. Symplectic Geom.}, 11(1):53--91, 2013.

\bibitem{ryvkin2016observables}
L.~Ryvkin.
\newblock {\em Observables and Symmetries of N-plectic Manifolds}.
\newblock Springer, 2016.

\bibitem{LeonidTillmannComoment}
L.~Ryvkin and T.~Wurzbacher.
\newblock Existence and unicity of co-moments in multisymplectic geometry.
\newblock {\em Differential Geom. Appl.}, 41:1--11, 2015.

\bibitem{sati2008lie}
H.~Sati, U.~Schreiber, and J.~Stasheff.
\newblock {$L_\infty$}-algebra connections and applications to {S}tring- and
  {C}hern-{S}imons {$n$}-transport.
\newblock In {\em Quantum field theory}, pages 303--424. Birkh\"{a}user, Basel,
  2009.

\bibitem{HDirac}
M.~Zambon.
\newblock {$L_\infty$}-algebras and higher analogues of {D}irac structures and
  {C}ourant algebroids.
\newblock {\em J. Symplectic Geom.}, 10(4):563--599, 2012.

\end{thebibliography}

\end{document}